\newtheorem{theorem}{Theorem} [section]
\newtheorem{lemma}[theorem]{Lemma}
\newtheorem{corollary}[theorem]{Corollary}
\newtheorem{proposition}[theorem]{Proposition}
\newtheorem{example}[theorem]{Example}
\newtheorem{remark}[theorem]{Remark}
\numberwithin{equation}{section}
\subjclass[2010]{06E75 (primary), 08A05, 08B20 (secondary)} 
\title{Free skew Boolean algebras}
\author{Ganna Kudryavtseva}
\address{G. Kudryavtseva: 
Faculty of Civil and Geodetic Engineering, University of Ljubljana, Jamova cesta~2, SI-1000 Ljubljana, Slovenia}
\email{ganna.kudryavtseva\symbol{64}fgg.uni-lj.si}
\author{Jonathan Leech}
\address{J. ~Leech: Department of Mathematics,
Westmont College, 955 La Paz Road,
Santa Barbara, CA 93108,
USA}
\email{leech\symbol{64}westmont.edu}
\thanks{G. Kudryavtseva was partially supported by  ARRS grant P1-0288 and by Jo\v{z}ef Stefan Institute (IJS)}
\begin{document}

\maketitle

\begin{abstract} We study the structure and properties of free skew Boolean algebras. For finite generating sets, these free algebras are finite and we give their representation as a product of primitive algebras and provide formulas for calculating their cardinality. We also characterize atomic elements and central elements and calculate the number of such elements. These results are used to study minimal generating sets of finite skew Boolean algebras. We also prove that the center of the free infinitely generated algebra is trivial and show that all free algebras have intersections.

\vspace{0.5cm}

\end{abstract}

\section{Introduction}\label{s0:introduction}

Skew Boolean algebras (abbreviated SBAs) are non-commutative variants of generalized Boolean algebras (abbreviated GBAs). They are algebras $(S;\wedge,\vee,\setminus, 0)$ of signature $(2, 2, 2, 0)$ satisfying the usual axioms for generalized Boolean algebras, except for the commutativity of the operations $\wedge$ and $\vee$. (See Section \ref{s1:background} for details and further definitions.)

Skew Boolean algebras in some form were studied first in Australia by William Cornish and his student, Robert Bignall, and then in the United States by Jonathan Leech. (See~\cite{B,BL,C,L2}.)
They arise as algebraic structures defined on sets of partial functions, much as the subsets of a given set $X$ form a Boolean algebra under certain well-known operations. They also occur in rings where skew Boolean operations can be defined on various subsets of idempotents and in particular in rings whose full set of idempotents is closed under multiplication, much as any maximal set of commuting idempotents in a ring forms a GBA. (See \cite{CV2,CVL1,CVL2}.)  Discriminator varieties, an area of interest in universal algebra, have strong connections to SBAs that have intersections. (See  \cite[IV.9]{BS} and \cite{BL}. In this paper, the term intersection is
introduced in the final section.) Connections with logic and computer science have been addressed in \cite{CVLS,SBV,VS}. Various algebraic structures with close ties to skew Boolean algebras have been recently studied  in \cite{BJSV,Cir,CVS}. Stone duality has been extended from Boolean algebras to skew Boolean algebras in \cite{BCV,Kud,Kud1,KudLaw}. It is remarkable, however, that thus far free SBAs have received no systematic study. Thus, in this bicentennial year of George Boole (as of this writing), we seek to rectify this.

The purpose of this paper is to study free skew Boolean algebras. It begins by reviewing basic concepts and results  in Section \ref{s1:background}. Since finitely generated free SBAs are finite, a closer look at finite algebras occurs in Section \ref{s2:primitive}. The key concepts in this section are those of the orthogonality of elements or of primitive subalgebras of a given SBA, which leads to the important notion of an orthosum of primitive algebras. Results in this section are used in Section \ref{s3:finite} to analyze finitely generated free SBAs. The main results thereof are Theorems \ref{th:3.2} and \ref{th:3.3} that characterize the structure of term algebras on finitely many generators - term algebras being the default versions of free algebras. Applications to finite SBAs in general occur in Section \ref{s4:free_min_gen_set}, which focuses on determining of the size of minimal generating sets of finite SBAs (the inverse optimization problem to that solved by free algebras: maximizing the algebra, given a fixed set of generators). In the final Section \ref{s5:atom_splitting} we consider arbitrary (possibly infinite) free algebras. We prove results about the center (Theorem \ref{th:5.2}) and about intersections (Theorem \ref{th:5.3} and Corollary \ref{cor:5.4}). Multiple characterizations of free skew Boolean algebras are given in Theorem \ref{th:5.5}, which in turn is used in Theorem \ref{th:5.8} to verify an alternative construction of free left-handed SBAs - alternative, of course, to term algebras.

Finally, although a good bit of background is given in Section \ref{s1:background}, more detailed background is given in \cite{BL,L2,L6} and the early survey article, \cite{L3}.  It should also be
mentioned that, unless stated otherwise, our universal algebraic terminology is consistent
with that found in Burris and Sankappanavar \cite{BS}.

\section{Background} \label{s1:background}

A {\em skew lattice} is an algebra ${\bf S} = (S; \wedge, \vee)$ where $\wedge$ and $\vee$ are associative binary operations on a set $S$ that satisfy the absorption identities:
$$
x\wedge (x\vee y) = x = (y\vee x)\wedge x \, \text{ and } \, x\vee (x\wedge y) = x = (y\wedge x)\vee x.
$$
Both operations are necessarily idempotent and the following dualities hold:
$u\wedge v=u \Leftrightarrow u\vee v=v$ and $u\wedge v=v \Leftrightarrow u\vee v=u$.
The reducts $(S;\vee)$ and $(S;\wedge)$ are {\em regular} bands, that is, semigroups of idempotents satisfying $xyxzx = xyzx$. All skew lattices possess a coherent {\em natural partial order}: $x\geq y$ if $x\wedge y =y = y\wedge x$ or dually $x\vee y = x = y\vee x$. This refines the {\em natural preorder}: $x\succeq y$  if $y\wedge x\wedge y =y$  or dually $x\vee y\vee x =x$.

A family of equivalences, ${\mathcal D}$, ${\mathcal L}$ and ${\mathcal R}$, known as the {\em Green's relations} in semigroup theory are relevant also to skew lattices. Given a skew lattice ${\bf S}$, an equivalence relation ${\mathcal D}$ is defined on $S$ via the natural preorder by $x\mathrel{\mathcal{D}} y$ if $x\succeq y\succeq x$.  Thus $x\mathrel{\mathcal{D}} y$ iff $x\wedge y\wedge x=x$ and $y\wedge x\wedge y=y$, or, equivalently, $x\vee y\vee x=x$ and $y\vee x\vee y=y$.  The {\em Clifford-McLean Theorem} for bands (semigroups of idempotents) extends to skew lattices. Thus: 
\begin{enumerate}
  \item ${\mathcal D}$ is congruence on ${\bf S}$;
  \item ${\bf S}/{\mathcal D}$ is the maximal lattice image of {\bf S};
  \item  each ${\mathcal D}$-class is a maximal rectangular subalgebra of ${\bf S}$. 
\end{enumerate}
That is, if $D$ is a ${\mathcal D}$-class, then $(D, \wedge)$ and $(D, \vee)$ are rectangular bands (satisfying the identity $xyx = x$). They also jointly satisfy $x\wedge y=y\vee x$.  Finally, each ${\mathcal D}$-class is {\em anticommutative}: $x\wedge y=y\wedge x$ (or $x\vee y=y\vee x$) iff $x=y$. (See \cite{L3}.)

The congruence ${\mathcal D}$ is refined by a pair of congruences, ${\mathcal L}$ and ${\mathcal R}$, given by
$x\mathrel{\mathcal{L}} y$ if $x\wedge y=x$ and $y\wedge x=y$, or, equivalently, $x\vee y=y$ and $y\vee x=x$.
Likewise, $x\mathrel{\mathcal{R}} y$ if $x\wedge y=y$ and $y\wedge x=x$, or, equivalently, $x\vee y=x$ and $y\vee x=y$. ${\mathcal L}\cap {\mathcal R}=\Delta$,
the identity equivalence, while under the composition of relations, ${\mathcal L}\circ {\mathcal R}={\mathcal R}\circ {\mathcal L} = {\mathcal L}\vee{\mathcal R}={\mathcal D}$.
A skew lattice ${\bf S}$ is {\em left-handed} ({\em right-handed}) if and only if ${\mathcal D} = {\mathcal L}$ (${\mathcal D} = {\mathcal R}$). Equivalently, ${\bf S}$ is left-handed if both
$x\wedge y\wedge x=x\wedge y$
and dually $x\vee y\vee x=y\vee x$
hold ($x\wedge y\wedge x=y\wedge x$
and  $x\vee y\vee x=x\vee y$ hold). For any skew lattice ${\bf S}$, the canonical
maps ${\bf S}\to{\bf S}/{\mathcal R}$, ${\bf S}\to{\bf S}/{\mathcal L}$ and ${\bf S}\to{\bf S}/{\mathcal D}$  are universal homomorphisms from ${\bf S}$ to the
respective varieties of left- and right-handed skew lattices and lattices. This induces the pullback diagram

\begin{center}
\begin{equation}\label{eq:diagram}
\begin{tikzpicture}[baseline=(current  bounding  box.center)]
\node (a) {${\bf S}$};
\node(b) [node distance=2 cm, right of=a] {${\bf S}/{\mathcal R}$};
\node (a') [node distance=1.6cm, below of=a] {${\bf S}/{\mathcal L}$};
\node (b') [node distance=1.6cm, below of=b] {${\bf S}/{\mathcal D}$};
\path[>>->]
(a) edge node[above]{} (b)
(a) edge node[left]{} (a')
(a') edge node[above]{} (b')
(b) edge node[below right]{} (b');
\end{tikzpicture},
\end{equation}
\end{center}
the common composition being the canonical map ${\bf S}\to{\bf S}/{\mathcal D}$, so that
${\bf S}\simeq {\bf S}/{\mathcal R}\times_{{\bf S}/{\mathcal D}}{\bf S}/{\mathcal L}$ This is the {\em Kimura Factorization Theorem}, extended from regular bands to skew lattices. (See \cite{L3} and also \cite{CV1}.) Both factor maps ${\bf S}/{\mathcal R}\to {\bf S}/{\mathcal D}$ and ${\bf S}/{\mathcal L}\to {\bf S}/{\mathcal D}$, moreover, are universal maps from ${\bf S}/{\mathcal R}$ and ${\bf S}/{\mathcal L}$, respectively, to the variety of lattices. Put otherwise, we have the induced isomorphisms $({\bf S}/{\mathcal R})/{\mathcal D}_{{\bf S}/{\mathcal R}} \simeq {\bf S}/{\mathcal D} \simeq ({\bf S}/{\mathcal L})/{\mathcal D}_{{\bf S}/{\mathcal L}}$.

A skew lattice ${\bf S}$ is {\em symmetric} if $a\vee b = b\vee a$ if and only if $a\wedge b = b\wedge a$ for all $a, b \in S$, thus making instances of commutation unambiguous. It is {\em distributive} if both $x\wedge (y\vee z)\wedge x = (x\wedge y
\wedge x) \vee (x\wedge z\wedge x)$ and $x\vee (y\wedge z)\vee x = (x\vee y\vee x)\wedge (x\vee z\vee x)$ hold. It is {\em normal} if $x\wedge y\wedge z\wedge w = x\wedge z\wedge y\wedge w$ holds. In terms of semigroup theory, normality means that $(S,\wedge)$ is a normal band, i.e., a band that is locally a semilattice. Normality is equivalent to
$\lceil e\rceil =\{x\in S\colon e\geq x\}$
being commutative for each $e \in S$ and thus forming a sublattice of~${\bf S}$.  (Normality was studied in~\cite{L6}.)

\begin{example}\label{ex:1}
{\em
Let ${\mathcal{P}}(A, B)$ denote the set of all partial functions from a given set $A$ to a second set $B$. Given functions $f\colon F\to B$ and $g\colon G\to B$ where $F, G\subseteq A$, we define partial functions $f\wedge g$ and $f\vee g$ in ${\mathcal{P}}(A, B)$ as follows:  $f\wedge g=f|_{G\cap F}$ and $f\vee g=g\cup (f|_{F\setminus G})$,  where $f|_{H}$ denotes $f$ restricted to a subset $H$ of $F$. The element $f\vee g$ is often called the {\em override} since $g$ overrides $f$ on their common subdomain. Thus $f\vee g$ favors $g$, and the {\em restriction} $f\wedge g$ favors $f$. The algebra ${\bf{\mathcal{P}}}_L(A,B) = ({\mathcal{P}}(A, B);  \wedge,\vee)$ is a left-handed skew lattice that is distributive, normal and symmetric. In fact it is {\em strongly distributive} in that it satisfies:
$$
x\wedge (y \vee z) = (x\wedge y) \vee (x\wedge z) \, \text{ and } \, (x \vee y)\wedge z = (x\wedge z) \vee (y\wedge z).
$$}
\end{example}

This leads us to recall the following result due to Leech:

\begin{theorem}[{\cite{L6}}]
A skew lattice ${\bf S}$ is strongly distributive if and only if  it is symmetric, distributive and normal.
\end{theorem}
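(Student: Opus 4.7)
My plan is to verify the two implications separately; the forward direction is largely computational, while the reverse direction requires care in passing from sandwiched identities to the one-sided strong-distributive ones.

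For ($\Rightarrow$), assume the two strong-distributive identities. I would first derive the dual strong-distributive identity $x\vee(y\wedge z) = (x\vee y)\wedge(x\vee z)$ from the given two by expanding $(x\vee y)\wedge(x\vee z)$ with the second identity, applying the first to the pieces, and then collapsing via absorption. Ordinary distributivity follows immediately: $x\wedge(y\vee z)\wedge x = [(x\wedge y)\vee(x\wedge z)]\wedge x = (x\wedge y\wedge x)\vee(x\wedge z\wedge x)$, and dually via the dual identity. For normality, the idea is that strong distributivity forces $\wedge$-commutation on each principal downset $\lceil e\rceil$: for $a,b\leq e$, one computes $a\vee(a\wedge b)$ and $a\vee(b\wedge a)$ using strong distributivity and absorption and concludes $a\wedge b = b\wedge a$; local $\wedge$-commutativity is equivalent to the normal-band law $xyzw = xzyw$. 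Symmetry is then obtained by comparing the strong-distributive expansions of $(a\wedge b)\vee(b\wedge a)$ and $(a\vee b)\wedge(b\vee a)$ and reading off the equivalence $a\wedge b = b\wedge a \Longleftrightarrow a\vee b = b\vee a$.

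For ($\Leftarrow$), assume symmetry, distributivity and normality. Normality makes each $\lceil e\rceil$ commutative under $\wedge$; symmetry promotes this to commutativity under $\vee$; distributivity then makes $\lceil e\rceil$ a distributive sublattice. Ordinary distributivity supplies only the sandwiched identity $x\wedge(y\vee z)\wedge x = (x\wedge y\wedge x)\vee(x\wedge z\wedge x)$, so the task is to strip the trailing $x$. I would pass through the Kimura factorization (\ref{eq:diagram}), verifying strong distributivity separately on the left- and right-handed factors: in a left-handed skew lattice $u\wedge v\wedge u = u\wedge v$, which cancels the trailing factor cleanly, and dually in the right-handed case. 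Strong distributivity for ${\bf S}$ then lifts via the pullback since it is an equational law preserved by subdirect products.

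The main obstacle is this last step: confirming that within each one-handed factor the sandwiched identity truly promotes to the one-sided identity, or equivalently that within a fixed $\mathcal{D}$-class the $\mathcal{L}$- and $\mathcal{R}$-coordinates of $x\wedge(y\vee z)$ and $(x\wedge y)\vee(x\wedge z)$ coincide. This is where normality (needed to freely reorder meets across a $\mathcal{D}$-class) and symmetry (needed to freely reorder joins on the local distributive sublattice) both enter in an essential way; keeping the coordinate bookkeeping consistent as one expands $x\wedge(y\vee z)$ is the point requiring the most care.
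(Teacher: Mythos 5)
The paper itself gives no argument for this statement---it is quoted from Leech \cite{L6}---so your proposal has to stand on its own, and as written both directions have genuine gaps. In the forward direction, the pivotal claim that strong distributivity yields the dual identity $x\vee(y\wedge z)=(x\vee y)\wedge(x\vee z)$ is false: in ${\bf 3}_L$ (strongly distributive, by the Cayley tables in the paper) take $x=1$, $y=2$, $z=0$; then $x\vee(y\wedge z)=1\vee 0=1$ while $(x\vee y)\wedge(x\vee z)=2\wedge 1=2$. Your proposed derivation breaks exactly at the ``collapsing via absorption'' step: expanding gives $(x\vee y)\wedge(x\vee z)=x\vee(y\wedge x)\vee(y\wedge z)$, and $x\vee(y\wedge x)$ is not an absorption instance (only $x\vee(x\wedge y)=x$ and $(y\wedge x)\vee x=x$ hold); indeed $1\vee(2\wedge 1)=2\neq 1$. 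Consequently your route to the second (sandwiched) distributive identity collapses; that identity does hold, but it must be obtained differently, e.g.\ by first establishing symmetry and normality and then invoking the fact recorded in the paper's remark after this theorem (see \cite{S1,CV3}) that the two distributive identities are equivalent for symmetric skew lattices. Your sketches for symmetry and normality share the same structural defect: strong distributivity only distributes $\wedge$ over $\vee$, so expressions such as $(a\wedge b)\vee(b\wedge a)$ and $a\vee(b\wedge a)$ admit no expansion of the kind you invoke, and the asserted conclusion $a\wedge b=b\wedge a$ on each $\lceil e\rceil$ (which would indeed give normality) is never actually derived.

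In the converse direction the skeleton is reasonable: the Kimura reduction is legitimate (normality and distributivity are identities, and symmetry is likewise preserved by the quotients since it too is characterized equationally), and in the left-handed case $x\wedge y\wedge x=x\wedge y$ does turn the first sandwiched identity into $x\wedge(y\vee z)=(x\wedge y)\vee(x\wedge z)$ for free. But the other law, $(x\vee y)\wedge z=(x\wedge z)\vee(y\wedge z)$, is precisely where left-handedness buys nothing (the sandwich it removes sits on the wrong side), and this is the substantive content of the theorem; your text explicitly leaves it as an ``obstacle'' to be handled with normality and symmetry, without carrying out the computation. So neither half is complete: the forward argument rests on an identity that fails already in ${\bf 3}_L$, and the backward argument omits its only nontrivial step.
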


\begin{remark}{\em
The four distributive identities (the two characterizing strong distributivity and the two characterizing distributivity) are mutually equivalent for lattices. This is not so for skew lattices. In particular, the two identities characterizing distributivity are not equivalent in general, but are so for symmetric skew lattices. (See \cite{S1} and \cite{CV3}.)}
\end{remark}

\begin{remark}{\em
Stone duality theory for strongly distributive skew lattices with zero has been developed that extends Priestley duality for distributive lattices with zero, itself an extension of classical Stone duality. (See \cite{BCVGGK}.)}
\end{remark}

Two other operations can be defined on ${\mathcal{P}}(A, B)$, turning ${\bf{\mathcal{P}}}_L(A,B)$ into a variant of the
Boolean algebra on the power set ${\mathcal{P}}(A)$: the {\em difference}, $f \setminus g = f|_{F\setminus G}$, and the nullary
operation given by the empty function $\varnothing$. The latter is the zero of ${\bf{\mathcal{P}}}_L(A,B)$. In general, a {\em zero}
element $0$ of a skew lattice is characterized by the identities:  $0\wedge x = 0 = x\wedge 0$ and $0\vee x = x = x\vee 0$. Zero elements, when they exist, are unique and form a sole ${\mathcal D}$-class. If $x\wedge y = 0$,
then $y\wedge x = 0$, and also $x\vee y = y\vee x$ when ${\bf S}$ is symmetric. In general, if $x\wedge y = 0$ and $u \mathrel{\mathcal{D}} x$ and $v \mathrel{\mathcal{D}} y$, then $u\wedge v = 0$.

A {\em skew Boolean algebra} (sometimes abbreviated SBA) is an algebra ${\bf S} = (S;  \wedge, \vee, \setminus, 0)$ such that the $(\wedge, \vee, 0)$- reduct is a strongly distributive skew lattice with zero element $0$ and $\setminus$ is a binary operation on $S$ satisfying
$$
(x\wedge y\wedge x) \vee (x\setminus y) = x \text{ and } (x\wedge y\wedge x)\wedge (x\setminus y) = 0 = (x\setminus y) \wedge (x\wedge y\wedge x).
$$
By symmetry one has $(x\setminus y) \vee (x\wedge y\wedge x) = x$ also so that $x\wedge y\wedge x$ commutes with $x\setminus y$. This all implies that, for each $e\in {\bf S}$, $\lceil e\rceil = \{x \in S\colon e \geq x\}$ is a Boolean sublattice of ${\bf S}$ with $e\setminus f$ being the unique complement of $e\wedge f\wedge e$ in $\lceil e\rceil$. 

\begin{theorem}[{\cite[Theorem 1.8]{L2}}] Skew Boolean algebras form a variety.
\end{theorem}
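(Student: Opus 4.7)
The plan is to exhibit an explicit set $\Sigma$ of identities in the signature $(\wedge, \vee, \setminus, 0)$ whose model class is precisely the class of skew Boolean algebras; then, by Birkhoff's HSP theorem (or equivalently by a direct verification of closure under homomorphic images, subalgebras and products), this class is a variety. So the task reduces to showing that every clause in the definition of an SBA given just above is, or can be replaced by, a universally quantified equation between terms.

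First, for the $(\wedge,\vee,0)$-reduct I would assemble the axioms of a strongly distributive skew lattice with zero, all of which are equational: associativity of $\wedge$ and $\vee$; the four absorption identities $x\wedge(x\vee y)=x=(y\vee x)\wedge x$ and $x\vee(x\wedge y)=x=(y\wedge x)\vee x$; the two strong distributivity laws $x\wedge(y\vee z)=(x\wedge y)\vee(x\wedge z)$ and $(x\vee y)\wedge z=(x\wedge z)\vee(y\wedge z)$; and the zero laws $0\wedge x=0=x\wedge 0$ and $0\vee x=x=x\vee 0$. Idempotence of each operation is already a consequence of absorption, and by the theorem of Leech recalled above, symmetry, distributivity and normality all follow from strong distributivity, so no further clauses are needed for the skew lattice part.

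Second, the three defining conditions for $\setminus$ are already equations as stated, namely $(x\wedge y\wedge x)\vee(x\setminus y)=x$, $(x\wedge y\wedge x)\wedge(x\setminus y)=0$, and $(x\setminus y)\wedge(x\wedge y\wedge x)=0$. The consequences mentioned in the text — that $x\wedge y\wedge x$ commutes with $x\setminus y$, and that $\lceil e\rceil$ is a Boolean lattice with $e\setminus f$ providing the complement of $e\wedge f\wedge e$ — are \emph{derived} properties, not separate axioms, so they do not need to be added to $\Sigma$. Putting these two groups of identities together yields the required finite equational basis.

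There is no real obstacle here: once one confirms that every clause in the informal definition is either already an equation or is captured by a finite set of equations, the conclusion is immediate. The only step that requires any care is the unpacking of the phrase "strongly distributive skew lattice with zero", which is what the preceding theorem is invoked to handle; aside from that, the proof is a bookkeeping verification followed by a one-line appeal to Birkhoff.
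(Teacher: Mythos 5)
Your proposal is correct as a proof of the statement relative to the definition adopted in this paper: every clause of that definition (associativity, the four absorption laws, the two strong-distributivity laws, the zero laws, and the three axioms for $\setminus$) is a universally quantified identity in the signature $(\wedge,\vee,\setminus,0)$, so the class is an equational class and hence closed under $H$, $S$ and $P$. Note, however, that the paper itself offers no proof — the theorem is imported from Leech's original article [L2, Theorem 1.8] — so there is no internal argument to compare yours against. The one point worth flagging is where the genuine mathematical content of the cited theorem lies: in Leech's original treatment a skew Boolean algebra is first introduced as a symmetric, distributive, normal skew lattice with $0$ in which each principal subalgebra $\lceil e\rceil$ is a Boolean lattice, and that is an existence condition (relative complements exist), not an identity. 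The substance of the original Theorem 1.8 is that these complements are unique, so that a difference operation $\setminus$ can be adjoined to the signature and the enlarged class axiomatized by equations — precisely the equations you list — in such a way that the two descriptions coincide. The present paper builds $\setminus$ and its equational axioms into the definition from the start, which is why your verification reduces to bookkeeping plus the easy direction of Birkhoff; your passing remark that the derived properties (commutation of $x\wedge y\wedge x$ with $x\setminus y$, Booleanness of $\lceil e\rceil$) need not be added is fine for the statement as given, but recovering them from the identities is exactly the check that made the original theorem more than a formality.
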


$({\mathcal{P}}(A, B); \wedge, \vee,\setminus, \varnothing)$ is an SBA, also denoted ${\bf{\mathcal{P}}}_L(A,B)$. As with Boolean algebras and their power set examples, every left-handed skew Boolean algebra can be embedded in some partial function algebra $({\mathcal{P}}(A, B); \wedge, \vee,\setminus, \varnothing)$. (See Leech \cite[Corollary 1.14]{L2}.)

For every left-handed skew lattice (or skew Boolean algebra), a dual right-handed counterpart exists, and conversely. Indeed, given any skew lattice ${\bf S} = (S; \wedge, \vee)$, its dual algebra ${\bf S}^* = (S; \wedge^*, \vee^*)$ is defined on $S$ by $x\wedge^*y = y\wedge x$ and $x\vee^*y = y\vee x$, with the double-dual ${\bf S}^{**}$ being ${\bf S}$. One algebra is left-handed iff the other is right-handed with both clearly being term equivalent. Thus for any general statement about left-handed skew lattices or SBAs, a dual statement about the right-handed cases holds, and conversely.

 \begin{remark}{\em Given a skew Boolean algebra, the canonical congruences ${\mathcal D}$, ${\mathcal R}$ and ${\mathcal L}$ are skew Boolean algebra congruences, and the skew Boolean algebra versions of the Clifford-McLean Theorem and the Kimura Factorization hold.}
  \end{remark}

\section{Primitive Algebras and Orthogonal Decompositions}\label{s2:primitive}

A skew Boolean algebra is {\em primitive} if it consists of two ${\mathcal D}$-classes, $A > \{0\}$. Given a rectangular skew lattice ${\bf A}$, if $A^0 = A\cup\{0\}$ where $0\not\in A$, then a unique primitive SBA ${\bf{A^0}}$ is defined
on $A^0$ by extending the operations on $A$ by letting $0$ be the zero element and setting
$$
x\setminus y = \left\lbrace\begin{array}{ll}x, & \text{if } y=0;\\
0, &\text{otherwise.}\end{array}\right.
$$
Essentially, all primitive skew Boolean algebras arise in this fashion. Three important primitive algebras
are the following: the generalized Boolean algebra ${\bf 2}$ determined on $\{1, 0\}$ by $1 > 0$; the left-handed primitive skew Boolean algebra ${\bf 3}_L$ determined on $\{1, 2, 0\}$ by $1\mathrel{\mathcal L} 2$ with both $1, 2 > 0$; and ${\bf 3}_R$, its right-handed dual
where $1\mathrel{\mathcal R} 2$. In detail, for ${\bf 3}_L$ we have the Cayley tables:
\vspace{0.2cm}
\begin{center}
\begin{tabular}{ccccc}

\begin{tabular}{c|c:c:cc}
$\wedge$ & $0$ & $1$& $2$ &\\
\cline{1-4}
$0$ & $0$ & $0$ & $0$&\\
\cdashline{1-4}
$1$ & $0$ & $1$ & $1$& \\
\cdashline{1-4}
$2$ & $0$ & $2$ & $2$&
\end{tabular}
&&
\begin{tabular}{c|c:c:cc}
$\vee$ & $0$ & $1$& $2$ &\\
\cline{1-4}
$0$ & $0$ & $1$ & $2$&\\
\cdashline{1-4}
$1$ & $1$ & $1$ & $2$& \\
\cdashline{1-4}
$2$ & $2$ & $1$ & $2$&
\end{tabular}
&&
\begin{tabular}{c|c:c:cc}
$\setminus$ & $0$ & $1$& $2$ &\\
\cline{1-4}
$0$ & $0$ & $0$ & $0$&\\
\cdashline{1-4}
$1$ & $1$ & $0$ & $0$& \\
\cdashline{1-4}
$2$ & $2$ & $0$ & $0$&
\end{tabular}
\end{tabular}
\end{center}

\vspace{0.2cm}
   \begin{theorem}[{\cite[Theorem 1.13]{L2}}]\label{th:2.1}
Every nontrivial skew Boolean algebra is a subdirect product of copies of ${\bf 2}$, ${\bf 3}_L$  and ${\bf 3}_R$; every nontrivial left-handed (right-handed) skew Boolean algebra is a subdirect product of copies of ${\bf 2}$ and ${\bf 3}_L$  (${\bf 2}$ and ${\bf 3}_R$).
\end{theorem}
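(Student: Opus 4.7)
The plan is to reduce the statement to the left-handed case via the Kimura factorization, and then to construct the requisite subdirect decomposition of a left-handed SBA using Leech's embedding into a partial function algebra.

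By the pullback description \eqref{eq:diagram}, the canonical quotient maps exhibit ${\bf S}$ as a subdirect product of ${\bf S}/{\mathcal R}$ (left-handed) and ${\bf S}/{\mathcal L}$ (right-handed). It therefore suffices to prove the left-handed half of the theorem and invoke the duality ${\bf S}\mapsto{\bf S}^*$ from Section~\ref{s1:background} for the right-handed half; the general statement then follows by combining the two reductions.

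For the left-handed version, I would use Leech's embedding ${\bf S}\hookrightarrow{\bf{\mathcal P}}_L(A,B)$ from \cite[Corollary~1.14]{L2} and introduce, for each $(p,q)\in A\times B$, the evaluation map $\phi_{p,q}:{\mathcal P}(A,B)\to{\bf 3}_L$ defined by
$$
\phi_{p,q}(f) = \begin{cases} 0, & \text{if } p\notin\mathrm{dom}\,f, \\ 1, & \text{if } p\in\mathrm{dom}\,f \text{ and } f(p)=q, \\ 2, & \text{if } p\in\mathrm{dom}\,f \text{ and } f(p)\neq q. \end{cases}
$$
The collection $\{\phi_{p,q}\}_{(p,q)\in A\times B}$ separates points of ${\mathcal P}(A,B)$: distinct $f,g$ disagree at some $p\in A$ (one defined there and not the other, or both defined with different values), and choosing $q$ to be $f(p)$ or $g(p)$ produces a $\phi_{p,q}$ distinguishing them. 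Pulled back along the embedding, each $\phi_{p,q}|_{\bf S}$ has image a subalgebra of ${\bf 3}_L$, hence either trivial, a copy of ${\bf 2}$, or ${\bf 3}_L$ itself; discarding the trivial coordinates yields a subdirect embedding of ${\bf S}$ in a product of copies of ${\bf 2}$ and ${\bf 3}_L$, as required.

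The main obstacle is the verification that each $\phi_{p,q}$ is indeed an SBA homomorphism. One handles this by a case analysis based on whether $p\in\mathrm{dom}\,f$, $p\in\mathrm{dom}\,g$, $f(p)=q$, and $g(p)=q$. The $\wedge$ case follows from $f\wedge g = f|_{\mathrm{dom}\,f\cap\mathrm{dom}\,g}$ together with the left-zero identity $x\wedge y = x$ in the top ${\mathcal D}$-class $\{1,2\}$ of ${\bf 3}_L$. The $\setminus$ case uses the fact that $x\setminus y = 0$ in ${\bf 3}_L$ whenever $y\neq 0$. The most delicate piece is $\vee$: the override $f\vee g$ must be matched, coordinate by coordinate, with $1\vee 2 = 2$ and $2\vee 1 = 1$ in the top ${\mathcal D}$-class of ${\bf 3}_L$, where the $g$-behaviour on $\mathrm{dom}\,g$ and the $f$-behaviour on $\mathrm{dom}\,f\setminus\mathrm{dom}\,g$ correspond respectively to these two identities.
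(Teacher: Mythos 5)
The paper offers no internal proof of this statement: it is quoted verbatim from Leech \cite[Theorem 1.13]{L2}, so the only basis for comparison is the correctness and logical standing of your argument. On the level of computation you are fine: the reduction of the two-sided case via the pullback \eqref{eq:diagram} is valid (${\mathcal L}\cap{\mathcal R}=\Delta$ gives a subdirect embedding of ${\bf S}$ into ${\bf S}/{\mathcal R}\times{\bf S}/{\mathcal L}$, and both quotients are nontrivial when ${\bf S}$ is, since $\{0\}$ is a sole ${\mathcal D}$-class), the maps $\phi_{p,q}$ are indeed SBA homomorphisms onto subalgebras of ${\bf 3}_L$ (the case analysis you sketch goes through for $\wedge$, $\vee$, $\setminus$ and $0$), and they separate points, so discarding trivial coordinates does give a subdirect embedding into copies of ${\bf 2}$ and ${\bf 3}_L$.

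The genuine gap is circularity, not computation. Your left-handed argument rests on the embedding ${\bf S}\hookrightarrow {\bf{\mathcal P}}_L(A,B)$ of \cite[Corollary 1.14]{L2}; but in Leech's paper that embedding is a \emph{corollary of Theorem 1.13} — the statement you are proving — obtained precisely by decomposing a left-handed algebra subdirectly into copies of ${\bf 2}\simeq{\bf{\mathcal P}}_L(\{1\},\{1\})$ and ${\bf 3}_L\simeq{\bf{\mathcal P}}_L(\{1\},\{1,2\})$ and assembling these into a single partial function algebra. So, taken as a proof of the theorem from first principles, your argument assumes something essentially equivalent to (and derived from) its conclusion; it only shows that Corollary 1.14 implies Theorem 1.13, which is the easy converse direction. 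A non-circular proof must produce the separating homomorphisms onto ${\bf 2}$, ${\bf 3}_L$, ${\bf 3}_R$ directly from ${\bf S}$: the standard route is Birkhoff's subdirect representation theorem (SBAs form a variety) together with a proof that the subdirectly irreducible skew Boolean algebras are, up to isomorphism, exactly ${\bf 2}$, ${\bf 3}_L$ and ${\bf 3}_R$, or equivalently a direct construction of the relevant congruences (e.g.\ from maximal ideals), which is how Leech argues. If instead you are permitted to take the faithful partial-function representation as independently established background, then your derivation is correct and pleasantly concrete — but you would need to say where an independent proof of that representation comes from, since within \cite{L2} it is downstream of the very theorem at issue.
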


\begin{remark}
{\em One canonical way of embedding left-handed skew Boolean algebras into powers of ${\bf 3}_L$ is given in~\cite{Kud1}
by the unit of an adjunction between Boolean spaces and left-handed skew Boolean algebras.}
\end{remark}

As a direct consequence of Theorem \ref{th:2.1} we have:
\begin{corollary} An equation or equational implication holds on all skew Boolean algebras iff it holds on ${\bf 3}_L$
and ${\bf 3}_R$. It holds on all left-handed (right-handed) skew Boolean algebras iff it holds on ${\bf 3}_L$ (${\bf 3}_R$) .
\end{corollary}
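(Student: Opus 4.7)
The plan is to derive the corollary directly from Theorem~\ref{th:2.1} using standard preservation properties of equations and quasi-identities under the class operators involved in a subdirect representation. The forward direction in each statement is immediate: since ${\bf 3}_L$ and ${\bf 3}_R$ are themselves skew Boolean algebras (and ${\bf 3}_L$ is left-handed, ${\bf 3}_R$ right-handed), any equation or equational implication holding in the respective variety automatically restricts to them.

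For the nontrivial converse, I would first note that ${\bf 2}$ embeds as a subalgebra into both ${\bf 3}_L$ and ${\bf 3}_R$ (namely, as the $\{0,1\}$-sublattice), so any identity or quasi-identity valid in ${\bf 3}_L$ (or in ${\bf 3}_R$) automatically holds in ${\bf 2}$. Hence the hypothesis actually secures validity in all three primitive algebras ${\bf 2}, {\bf 3}_L, {\bf 3}_R$ in the general case, and in ${\bf 2}$ and ${\bf 3}_L$ (resp.\ ${\bf 3}_R$) in the one-handed case. The trivial SBA satisfies every equation and quasi-identity vacuously, so we may restrict attention to nontrivial SBAs.

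Next I would invoke the well-known fact from universal algebra that the classes $\mathbf{S}$ and $\mathbf{P}$ (subalgebras and direct products) preserve both equations and quasi-identities; hence $\mathbf{P_S} = \mathbf{SP}$, the class of subdirect products, also preserves them. Theorem~\ref{th:2.1} says every nontrivial SBA lies in the class $\mathbf{P_S}(\{{\bf 2},{\bf 3}_L,{\bf 3}_R\})$, and every nontrivial left-handed (resp.\ right-handed) SBA lies in $\mathbf{P_S}(\{{\bf 2},{\bf 3}_L\})$ (resp.\ $\mathbf{P_S}(\{{\bf 2},{\bf 3}_R\})$). Combining these two observations yields the three claimed equivalences.

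There is essentially no obstacle here; the corollary is a direct translation of Theorem~\ref{th:2.1} via the preservation theorem. The only detail requiring a moment of care is checking that ${\bf 2}$ sits inside ${\bf 3}_L$ and ${\bf 3}_R$ as a genuine SBA-subalgebra (which follows from inspection of the Cayley tables displayed above for ${\bf 3}_L$), so that validity on ${\bf 3}_L$ alone already implies validity on the third subdirect factor ${\bf 2}$ in the one-handed statements.
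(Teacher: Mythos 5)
Your argument is correct and is exactly the route the paper intends: the corollary is stated as a direct consequence of Theorem \ref{th:2.1}, with validity transferred along subdirect products (subalgebras of products preserve equations and quasi-identities), plus the observation that ${\bf 2}$ is a subalgebra of ${\bf 3}_L$ and of ${\bf 3}_R$ so the factor ${\bf 2}$ comes for free. Nothing further is needed.
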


\begin{corollary} \label{cor:identities}
The following identities hold for skew Boolean algebras:
\begin{enumerate}[(i)]
\begin{multicols}{2}
\item $(x\wedge y)\setminus z = (x\setminus z)\wedge (y\setminus z)$;
\item $(x\vee y)\setminus z = (x\setminus z)\vee (y\setminus z)$;
\item $x\setminus (y\vee z)=(x\setminus y)\wedge (x\setminus z)$;
\item $x\setminus (y\wedge z) = (x\setminus y)\vee (x\setminus z)$;
\item $(x\setminus y)\setminus z = (x\setminus z)\setminus y$;
\columnbreak
\item $(x\setminus y)\setminus z=x\setminus (y\vee z)=x\setminus (z\vee y)$;
\item $x\setminus (x\setminus  y)=x\wedge y\wedge x$;
\item $(x\setminus y)\vee y=y\vee x\vee y=y\vee (x\setminus y)$;
\item $x\setminus (x\wedge y)=x\setminus (y\wedge x)=x\setminus y$.
\end{multicols}
\end{enumerate}
\end{corollary}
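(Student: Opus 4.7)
The plan is to invoke the immediately preceding Corollary, which reduces the verification of any equational identity over skew Boolean algebras to a check in the two primitive three-element algebras ${\bf 3}_L$ and ${\bf 3}_R$. Thus each of the nine identities (i)--(ix) collapses to a finite case analysis on the Cayley tables already exhibited for ${\bf 3}_L$, together with their mirrored counterparts for ${\bf 3}_R$.

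To streamline the computation in ${\bf 3}_L$, I would first extract two structural facts directly from the tables: $x\setminus y = x$ when $y = 0$ and $x\setminus y = 0$ otherwise (note in particular $2\setminus 1 = 0$, reflecting that $1$ and $2$ lie in the same $\mathcal{D}$-class), and on that nontrivial $\mathcal{D}$-class one has $x\wedge y\wedge x = x$. With these observations in hand, most identities reduce to a split according to which variables standing on the right of a $\setminus$ symbol vanish. For instance, in (iii), if $y = z = 0$ both sides equal $x$, while if $y\vee z \neq 0$ the left side is $0$ and the right side is a meet containing at least one $0$ factor; identities (i), (ii), (iv)--(vi), (viii), (ix) fall to parallel splits. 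Identity (vii) follows because whenever $y\neq 0$ one has $x\setminus y = 0$, hence $x\setminus (x\setminus y) = x\setminus 0 = x$, matching $x\wedge y\wedge x$ by the second observation, while if $y = 0$ both sides collapse to $0$.

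Checking on ${\bf 3}_R$ is then essentially automatic: its operations arise from those of ${\bf 3}_L$ by reversing the order of arguments in $\wedge$ and $\vee$ while leaving $\setminus$ untouched, so for each identity either both sides are symmetric under this reversal, or a trivial swap of bound variable names reduces the verification to the ${\bf 3}_L$ case already handled. The main obstacle is purely organizational: nine identities each generate a small table of cases, and the work is to arrange these uniformly rather than redo each from scratch. No step is deep; once the two observations on ${\bf 3}_L$ are fixed, the whole argument is mechanical.
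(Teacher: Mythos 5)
Your proposal is correct and takes essentially the same route as the paper: the corollary is justified there by the preceding reduction corollary, i.e.\ by checking the identities on evaluations in ${\bf 3}_L$ and ${\bf 3}_R$, exactly the mechanical table check you organize. The only nuance, which the paper flags in the remark immediately following, is that the reduction to ${\bf 3}_L$ and ${\bf 3}_R$ itself rests on a subset of these identities being derivable directly from the skew Boolean algebra axioms, so the purely tabular verification carries that acknowledged dependency.
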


Note that (iii) and (iv) are the de Morgan laws for SBAs. While  (i) - (ix) can be checked out on ${\bf 3}_L$ and ${\bf 3}_R$ thanks to Corollary \ref{cor:identities}, that corollary itself depends on a subset of these assertions being derived from the definitions of a skew Boolean algebra. Since (pre-)order relations can be expressed as equalities, we also have the implications:
$$
{\emph(x)}\,\, y\preceq z \Rightarrow x\setminus z\geq x\setminus y; \,\,\,\, {\emph{(xi)}} \,\, x\leq y \Rightarrow x\setminus z\leq y\setminus z; \,\,\,\,
{\emph{(xii)}} \,\,  x\preceq y \Rightarrow x\setminus z\leq y\setminus z.
$$

\begin{corollary}\label{for:lf} Skew Boolean algebras are locally finite in that finite subsets generate finite subalgebras. \end{corollary}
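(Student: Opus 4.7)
The plan is to reduce to Theorem~\ref{th:2.1} and show that a finitely generated skew Boolean algebra embeds into a \emph{finite} product of copies of $\mathbf{2}$, $\mathbf{3}_L$ and $\mathbf{3}_R$. The key observation is that a homomorphism from a finitely generated algebra into one of these three (finite) algebras is determined by the images of the generators, so there are only finitely many such homomorphisms up to equality.

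Let $S$ be a skew Boolean algebra and let $S_0$ denote the subalgebra generated by a finite subset $\{a_1,\dots,a_n\}\subseteq S$. If $S_0$ is trivial there is nothing to prove, so by Theorem~\ref{th:2.1} applied to $S_0$ we obtain a family $\{\phi_i\colon S_0\tto F_i\}_{i\in I}$ of surjective homomorphisms, with each $F_i\in\{\mathbf{2},\mathbf{3}_L,\mathbf{3}_R\}$, such that the induced map $S_0\hookrightarrow\prod_{i\in I}F_i$ is injective (equivalently, $\bigcap_{i\in I}\ker\phi_i=\Delta_{S_0}$).

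Because the $a_j$ generate $S_0$, each $\phi_i$ is completely determined by the tuple $(\phi_i(a_1),\dots,\phi_i(a_n))\in F_i^{\,n}$. For a fixed target $F\in\{\mathbf{2},\mathbf{3}_L,\mathbf{3}_R\}$ there are at most $|F|^n\le 3^n$ such tuples, and hence at most $N:=2^n+2\cdot 3^n$ pairwise distinct homomorphisms $\phi_i$ in total (counted across the three possible targets). After discarding repeats we are left with finitely many homomorphisms $\phi_{i_1},\dots,\phi_{i_N}$ whose kernels still intersect to $\Delta_{S_0}$; this is because any $\phi_i$ dropped was equal to some $\phi_{i_k}$ and so contributed the same kernel. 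Consequently the induced map
\[
(\phi_{i_1},\dots,\phi_{i_N})\colon S_0\longrightarrow F_{i_1}\times\cdots\times F_{i_N}
\]
is an injective homomorphism into a finite product, so $|S_0|\le \prod_{k=1}^{N}|F_{i_k}|\le 3^{N}$ and $S_0$ is finite.

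The only point requiring care is the passage from an arbitrary subdirect decomposition to a finite one; this is the heart of the argument, and it works precisely because the targets $\mathbf{2}$, $\mathbf{3}_L$, $\mathbf{3}_R$ are finite and $S_0$ is generated by finitely many elements, so only boundedly many homomorphisms can occur.
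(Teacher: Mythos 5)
Your proof is correct and follows essentially the same route as the paper: both use Theorem~\ref{th:2.1} to embed the finitely generated subalgebra subdirectly into a product of copies of ${\bf 2}$, ${\bf 3}_L$, ${\bf 3}_R$, and then observe that, since a homomorphism is determined by its values on the finite generating set, only finitely many distinct coordinate maps occur, so the embedding factors through a finite subproduct. Your write-up just makes the counting and the discarding of repeated coordinates explicit.
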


\begin{proof}
Indeed, given a finite set $X$, only finitely many distinct functions exist from $X$ to ${\bf 2}$ or to ${\bf 3}_L$ or to ${\bf 3}_R$. Hence, any embedding from $X$ into a product of these three algebras can be reduced to an embedding into a finite subproduct of these algebras.
\end{proof}
Just as finite (generalized) Boolean algebras are isomorphic to direct products of ${\bf 2}$, more generally finite skew Boolean algebras or even skew Boolean algebras with finitely many ${\mathcal D}$-classes, are isomorphic
to direct products
of finitely many primitive skew Boolean algebras. (See \cite[Theorem 1.16]{L2}  or \cite[Theorem 3.1]{LS}.) In what follows we will be interested in direct products of finitely many primitive skew Boolean algebras. We begin as follows:

Two elements $a$ and $b$ in an SBA are {\em orthogonal} if $a\wedge b = 0$, which is equivalent to  $b\wedge a = 0$ and also to $a\setminus b=a$ and $b\setminus a=b$. These conditions imply $a\vee b=b\vee a$. A set of elements $\{a_1, \dots, a_n\}$ is an {\em orthogonal set} if the $a_i$ are pairwise orthogonal, in which case $a_1\vee a_2\vee\dots\vee a_n = a_{\sigma(1)}\vee a_{\sigma(2)}\vee \dots \vee a_{\sigma(n)}$ for all permutations $\sigma$ on $\{1,2,\dots, n\}$. In this situation, $a_1\vee a_2\vee\dots\vee a_n$ is denoted by $a_1 +\dots +a_n$ or $\sum_1^n a_i$. (In fact, such notation assumes orthogonality.) Such a sum is referred to as an {\em orthogonal sum}, or an {\em orthosum} for short.

A family of ${\mathcal D}$-classes $\{D_1,\dots, D_r\}$ is {\em orthogonal} when elements from distinct classes are orthogonal. For this it is sufficient that some transversal set $\{d_1,\dots, d_r\}$ be
orthogonal. In general we have:

\begin{lemma} Given an orthogonal family of  ${\mathcal D}$-classes $\{D_1,\dots, D_r\}$ and two
orthosums $a_1 +\dots +a_r$ and $b_1 +\dots +b_r$ where $a_i,b_i\in D_i$:
\begin{enumerate}[(i)]
\item $ (a_1 +\dots +a_r) \vee   (b_1 +\dots +b_r) = (a_1\vee b_1) + (a_2\vee b_2) + \dots + (a_r\vee b_r)$;
\item $ (a_1 +\dots +a_r) \wedge   (b_1 +\dots +b_r) = (a_1\wedge b_1) + (a_2\wedge b_2) + \dots + (a_r\wedge b_r)$;\item $ (a_1 +\dots +a_r) \setminus   (b_1 +\dots +b_r) = (a_1\setminus b_1) + (a_2\setminus b_2) + \dots + (a_r\setminus b_r)$;
\item $a_1 +\dots +a_r= b_1 +\dots +b_r$  if and only if $a_i = b_i$ for all $1\leq i\leq r$.
\end{enumerate}
In the case where all $D_1,\dots, D_r$ are nonzero, {(i) -- (iv)} extend to the subalgebra $\sum_1^r D_i^0=\{x_1+\dots +x_r\colon x_i\in D_i^0\}$ generated from
the union $D_1\cup\dots\cup D_r$ where elements from distinct $D_i^0$ are also orthogonal.
\end{lemma}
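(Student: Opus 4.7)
The plan is to prove (i)--(iii) by a careful use of strong distributivity together with the algebraic consequences of orthogonality recalled in the text: if $x\in D_i$ and $y\in D_j$ with $i\neq j$, then $x\wedge y=0$, $x\vee y=y\vee x$, and $x\setminus y=x$. Once these are in hand, part~(iv) will follow from~(ii) by isolating each coordinate through a well-chosen meet and then invoking the anticommutativity of $\mathcal{D}$-classes as rectangular bands.

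For (ii), strong distributivity yields
\[
(a_1+\cdots+a_r)\wedge(b_1+\cdots+b_r)\;=\;\bigvee_{i,j}a_i\wedge b_j,
\]
and every cross-term with $i\neq j$ vanishes by orthogonality of the $D_i$, leaving $\bigvee_i(a_i\wedge b_i)$. Since $a_i\wedge b_i\leq a_i$, these summands remain pairwise orthogonal, so the resulting join really is an orthosum. For (i) I observe that any two of the $2r$ elements appearing in $a_1\vee\cdots\vee a_r\vee b_1\vee\cdots\vee b_r$ commute under $\vee$ except possibly the $r$ pairs $(a_i,b_i)$ that share a $\mathcal{D}$-class; using those pairwise commutations and associativity I can rearrange the expression, keeping each $a_i$ to the left of $b_i$, into $(a_1\vee b_1)\vee\cdots\vee(a_r\vee b_r)$. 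Strong distributivity then verifies $(a_i\vee b_i)\wedge(a_j\vee b_j)=0$ for $i\neq j$, showing that the rearranged expression is again an orthosum.

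For (iii) I apply identity~(ii) of Corollary~\ref{cor:identities} to peel off the outer sum,
\[
(a_1+\cdots+a_r)\setminus c\;=\;\sum_i(a_i\setminus c),
\]
and then use identity~(vi) to rewrite $a_i\setminus(b_1+\cdots+b_r)$ as the iterated removal $(\cdots((a_i\setminus b_1)\setminus b_2)\cdots)\setminus b_r$. Every intermediate value lies below $a_i$, and any $x\leq a_i$ satisfies $x\wedge b_j = x\wedge a_i\wedge b_j = 0$ whenever $j\neq i$, hence $x\setminus b_j = x$. Therefore only the $j=i$ step is nontrivial, and the iterated removal collapses to $a_i\setminus b_i$, yielding~(iii).

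For (iv), meet $a_1+\cdots+a_r=b_1+\cdots+b_r$ on the left with $a_i$ and then on the right; using (ii) and orthogonality, the left-meet gives $a_i = a_i\wedge b_i$ and the right-meet gives $a_i = b_i\wedge a_i$, so $a_i\wedge b_i = b_i\wedge a_i$ in the rectangular band $D_i$, and anticommutativity forces $a_i = b_i$. The extension to $\sum_1^r D_i^0$ is then immediate: $0$ is orthogonal to every element and is transparent under $\vee$, so the formulas (i)--(iii) apply verbatim when any $a_i$ or $b_i$ is $0$, giving closure of that set under $\wedge,\vee,\setminus$, and (iv) continues to hold by the same argument (with both sides possibly equal to $0$). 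The main subtlety I expect is the rearrangement step in~(i): because the $\vee$-band is only regular, not commutative, the reordering must genuinely exploit the pairwise commutations of orthogonal elements while preserving the relative order of each $(a_i,b_i)$ pair; once this is set up, everything else reduces to a routine use of strong distributivity and the identities of Corollary~\ref{cor:identities}.
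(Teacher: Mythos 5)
Your argument is correct, but it is genuinely different from the one in the paper. The paper observes that, since orthogonality and membership in prescribed $\mathcal{D}$-classes are expressible by equations, each of (i)--(iv) is a quasi-identity; by the corollary to Theorem~\ref{th:2.1} it therefore suffices to verify them on ${\bf 3}_L$ and ${\bf 3}_R$, where there is only one non-zero $\mathcal{D}$-class, so $r=1$ and every assertion trivializes. Your proof instead works internally: strong distributivity kills the cross terms in (ii), pairwise $\vee$-commutation of orthogonal elements justifies the reordering in (i) (and your check that the new summands are again orthogonal is the right thing to verify), Corollary~\ref{cor:identities} (ii) and (vi) reduce (iii) to iterated differences that collapse because every intermediate value sits below $a_i$, and (iv) follows by isolating coordinates with a meet and invoking anticommutativity of the rectangular class $D_i$. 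What the paper's route buys is brevity and a uniform reduction principle (at the cost of the meta-observation that the statements are quasi-identities); what yours buys is a self-contained computation that makes visible exactly which identities are responsible. One small point in your treatment of the extension to $\sum_1^r D_i^0$: in (iv), if $a_i=0$ but $b_i\neq 0$, meeting with $a_i$ gives only $0=0$ and anticommutativity cannot be applied, since $0\notin D_i$; you should instead meet both sides with $b_i$, which yields $b_i=b_i\wedge a_i=0$, a contradiction. This is a symmetric variant of your own step, so the fix is immediate, but as literally written the degenerate case is not covered.
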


\begin{proof} The orthosums on both sides of parts (1), (2) and (3) are well-defined because the family $\{D_1,\dots, D_r\}$ is orthogonal. Observe that the conditions $a_i\mathrel{\mathcal D} b_i$ are expressible as equalities. Furthermore,  the condition that $D_i$ is orthogonal to $D_j$, for $i\neq j$, is equivalent to $a_i\wedge a_j=0$, which is also an equality.
It follows that the equalities in parts  (1), (2) and (3), under the given assumptions, are expressible as quasi-identities (where $+$ must be read as $\vee$). Likewise, the claim of (4), under the given assumptions, consists of two quasi-identities. So in order to prove all the claims, it is enough to verify that they hold in ${\bf 3}_R$ and ${\bf 3}_L$. But in this case there is only one non-zero ${\mathcal D}$-class, so that $r=1$, in which case parts (1) - (4) trivialize respectively down to: $a_1\vee b_1=a_1\vee b_1$; $a_1\wedge b_1=a_1\wedge b_1$; $a_1\setminus b_1=a_1\setminus b_1$; and $a_1=b_1$ if and only if $a_1=b_1$. The final statement is immediate, as elements of $\sum_1^r D_i^0$ are orthosums  $x_1+\dots +x_r$ where $x_i\in D_i^0$.
\end{proof}

The algebra $\sum_1^r D_i^0$ is an internal direct product of the primitive subalgebras $D_1,\dots, D_r$. It is also called the {\em orthosum} of the $D_i^0$. Of course, $a\setminus b=0$ whenever $a \mathrel{\mathcal D} b$. A special
case occurs when $D_1,\dots, D_r$ are atomic ${\mathcal D}$-classes, i.e., lying directly over the zero class $\{0\}$. Here meets $a\wedge b$ of elements from distinct classes are forced to be zero, making $a$ and $b$ orthogonal. In general, non-zero orthogonal ${\mathcal D}$-classes $D_1,\dots, D_r$ are the atomic ${\mathcal D}$-classes of the subalgebra $\sum_1^r D_i^0$. In any case, we have the following basic result for SBAs with only finitely many ${\mathcal D}$-classes. (See \cite[Lemma 1.11 and Theorem 1.16]{L2}.)
\begin{theorem}\label{th:2.6}
A nontrivial skew Boolean algebra ${\bf S}$ with finitely many ${\mathcal D}$-classes has a finite number of atomic ${\mathcal D}$-classes $D_1,\dots, D_r$, in which case it is the orthosum $\sum_1^r D_i^0$ of the corresponding primitive subalgebras $D_1^0,\dots, D_r^0$.
\end{theorem}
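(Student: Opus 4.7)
The plan is to reduce everything to the maximal lattice image $\mathbf{S}/\mathcal{D}$ and then lift the resulting Boolean decomposition back to $\mathbf{S}$ using normality.

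First I would argue that $\mathbf{S}/\mathcal{D}$ is a finite Boolean algebra. Since $\mathcal{D}$ is an SBA-congruence (Remark before Section~\ref{s2:primitive}), $\mathbf{S}/\mathcal{D}$ is a generalized Boolean algebra with zero, and by hypothesis it is finite, hence a finite Boolean lattice. Consequently it possesses finitely many atoms; these are precisely the images of the atomic $\mathcal{D}$-classes of $\mathbf{S}$, giving finitely many such classes $D_1,\dots,D_r$. Next I would check that distinct $D_i,D_j$ are orthogonal: for $a\in D_i$, $b\in D_j$, the class $[a\wedge b]=[a]\wedge[b]$ is the meet of two distinct atoms in $\mathbf{S}/\mathcal{D}$, hence $0$, so $a\wedge b=0$.

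The main work is the decomposition. Fix representatives $a_i\in D_i$ and, for any $x\in S$, set
$$
x_i \;=\; x\wedge a_i\wedge x \quad (i=1,\dots,r).
$$
Using idempotency of $\wedge$, a direct computation gives $x\wedge x_i=x_i=x_i\wedge x$, so $x\geq x_i$ and $x_i\in\lceil x\rceil$. Projecting to $\mathbf{S}/\mathcal{D}$ yields $[x_i]=[x]\wedge[a_i]$, which equals $[a_i]$ if $[a_i]\leq[x]$ and $0$ otherwise; thus each $x_i\in D_i^0$. For $i\neq j$, $[x_i\wedge x_j]=[x]\wedge[a_i]\wedge[a_j]=0$, so $x_i\wedge x_j=0$, i.e.\ the $x_i$'s form an orthogonal family.

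Now normality implies $\lceil x\rceil$ is a Boolean sublattice of $\mathbf{S}$, so I may form $y=x_1\vee\cdots\vee x_r$ inside $\lceil x\rceil$; because the $x_i$ are orthogonal this is in fact the orthosum $x_1+\cdots+x_r$. Since every $x_i\leq x$, we have $y\leq x$. On the other hand,
$$
[y]\;=\;\bigvee_{i=1}^{r}[x_i]\;=\;[x]\wedge\Bigl(\bigvee_{i=1}^{r}[a_i]\Bigr)\;=\;[x]\wedge 1\;=\;[x],
$$
the join $\bigvee_i[a_i]$ being the top of the finite Boolean algebra $\mathbf{S}/\mathcal{D}$. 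Thus $y\,\mathcal{D}\,x$ with $y\leq x$, and since $\mathcal{D}$ is trivial on the lattice $\lceil x\rceil$ we conclude $y=x$. Hence $x=x_1+\cdots+x_r$, and uniqueness of the decomposition is part~(iv) of the preceding Lemma; parts~(i)--(iii) then ensure that the operations act componentwise, realizing $\mathbf{S}$ as the internal orthosum $\sum_1^r D_i^0$.

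The main obstacle I anticipate is the last equality $y=x$: it is tempting but insufficient to argue purely at the level of $\mathbf{S}/\mathcal{D}$, and one genuinely needs normality to ensure $\lceil x\rceil$ is a Boolean sublattice in which $\mathcal{D}$ collapses, so that matching $\mathcal{D}$-classes forces equality. Everything else (orthogonality of atomic classes, membership $x_i\in D_i^0$, uniqueness) reduces cleanly to the already-established lattice-theoretic behaviour in $\mathbf{S}/\mathcal{D}$ and to the preceding Lemma.
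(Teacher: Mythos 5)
Your proof is correct. Note that the paper does not actually prove Theorem~\ref{th:2.6}; it quotes it from Leech's original paper (Lemma~1.11 and Theorem~1.16 of \cite{L2}), so there is no in-text argument to compare against, but your reconstruction is the standard one and all steps check out: ${\bf S}/\mathcal{D}$ is a finite (hence bounded) Boolean lattice whose atoms correspond to the atomic $\mathcal{D}$-classes; distinct atomic classes are orthogonal because the $\mathcal{D}$-class of $0$ is $\{0\}$; the components $x_i=x\wedge a_i\wedge x$ lie in $D_i^0$, are pairwise orthogonal, and sit in the Boolean sublattice $\lceil x\rceil$ supplied by normality; and the identity $[x_1\vee\dots\vee x_r]=[x]\wedge\bigl(\bigvee_i [a_i]\bigr)=[x]$ together with $y\leq x$ and the anticommutativity of $\mathcal{D}$-classes (equivalently, $y\leq x$ and $y\mathrel{\mathcal{D}}x$ force $y=x$) yields $x=x_1+\dots+x_r$, with uniqueness and the componentwise operations coming from parts (i)--(iv) of the preceding Lemma. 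The only cosmetic remark is that your final appeal to ``$\mathcal{D}$ trivial on $\lceil x\rceil$'' can be replaced by the one-line computation $x=x\wedge y\wedge x=y\wedge x=y$, which avoids even invoking normality at that step (normality is still genuinely used to know $\lceil x\rceil$ is closed under $\vee$, so the join $y$ exists below $x$).
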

The above decomposition is an {\em internal} form of the {\em atomic decomposition} of a
skew Boolean algebra ${\bf S}$, which must occur when ${\bf S}/{\mathcal D}$ is finite. This internal form is, of
course, unique. The {\em external} form, given as a direct product, is unique to within isomorphism. In the left-handed case for finite ${\bf S}$, the {\em standard atomic decomposition} is
$$
{\bf S}\simeq {\bf n_1}_{L}\times {\bf n_2}_{L} \times \dots \times {\bf n_r}_{L} \text{ with } 2\leq n_1\leq\dots\leq n_r,
$$
with ${\bf n}_{L}$ being the unique left-handed primitive algebra on $\{0, 1, 2, \dots, n-1\}$ where $0$ is
the $0$-element. Standard decompositions are also unique. Consider ${\bf  2} \times {\bf  2} \times {\bf  4}_L \times {\bf  5}_L \times {\bf  5}_L$ or more briefly ${\bf  2}^2 \times  {\bf  4}_L \times {\bf  5}_L^2$. In this instance ${\bf  2}^2$ provides the center of the algebra
where `$L$' is superfluous. Similar remarks hold in the right-handed case. In the two-sided general case one uses notation such as ${\bf  3}_L\bullet {\bf  5}_R$ to represent the primitive algebra ${\bf  3}_L\times_{\bf 2} {\bf  5}_R$ given by the fibered product, as in:
${\bf S}\simeq {\bf  2}^3 \times ({\bf  3}_L\bullet {\bf  5}_R) \times ({\bf  5}_L\bullet {\bf  4}_R)\times ({\bf  7}_L\bullet {\bf  7}_R)$.
In this case a standard decomposition could be given by lexicographically ordering the factors. In any case, a finite SBA is classified when its standard atomic decomposition is given. Standard atomic decompositions for finite free algebras are determined in Section \ref{s3:finite}.

\begin{example} {\em Partial function algebras serve as primary examples of SBAs. Note that
\begin{multline*}
{\bf{\mathcal{P}}}_L(\{1, \dots, n\}, \{1, \dots, m\})\simeq \prod_{i=1}^n {\bf{\mathcal{P}}}_L(\{i\}, \{1, \dots, m\})\\
\simeq \big({\bf{\mathcal{P}}}_L(\{1\}, \{1, \dots, m\})\big)^n \simeq {\bf{(m+1)}}_L^n.
\end{multline*}
In particular, ${\bf{\mathcal{P}}}_L(\{1, \dots, n\}, \{1\}) \simeq {\bf 2}^n$. In this case each partial function $f$ is determined
by choosing a subset of $\{1, 2, \dots, n\}$ to be $f^{-1}(1)$. This results in a bijection between ${\bf{\mathcal{P}}}_L(\{1, \dots, n\}, \{1\})$ and the power set of $\{1, 2, \dots, n\}$ that preserves the GBA operations.}
\end{example}

In the following lemma we collect some properties of congruences on and homomorphisms of skew Boolean algebras with finitely many ${\mathcal D}$-classes.

\begin{lemma}\label{lem:l:cong}
Let $\theta$ be a congruence on such a skew Boolean algebra $S$ with finitely many ${\mathcal D}$-classes viewed as an
orthosum $\sum_1^r D_i^0$ of primitive subalgebras $D_i^0$, the $D_i$ being the atomic ${\mathcal D}$-classes. Then:
\begin{enumerate}
\item \label{aj1} If $d \mathrel{\theta} 0$ for $d \in D_i$, then $D_i \subseteq [0]_{\theta}$, the congruence class of $0$.
\item  \label{aj2} If $d_1 \mathrel{\theta}  d_2$ with $d_1\in D_i$, $d_2\in D_j$ but $D_i \neq D_j$, then $D_i\cup D_j \subseteq [0]_{\theta}$.
\item  \label{aj3} If some $D_i \subseteq [0]_{\theta}$, then upon re-indexing $D_1, \dots , D_r$, one has $D_1\cup\dots \cup D_k \subseteq [0]_{\theta}$, while the remaining ${\theta}$-classes refine those ${\mathcal D}$-classes that remain. Thus
$$\left (\sum_1^r D_i^0\right)/\theta \simeq \sum_{k+1}^r D_i^0/\theta_i,
$$
where $\theta_i=\theta|_{D_i^0\times D_i^0}$ and $D_i^0/\theta_i$ is primitive for each $i\geq k+1$.
\end{enumerate}

Thus, given ${\bf S} = \sum_1^r D_i^0$ and a homomorphism $f \colon {\bf  S} \to  {\bf  T}$ of skew Boolean algebras:
\begin{enumerate}
\item[(4)]  \label{aj4} $f({\bf S})$ is an orthosum with summands $f (D_i^0)$, each of which is either primitive or
else just $\{0_{{\bf T}}\}$. In the former case, $f(D_i)$ is atomic in  $f({\bf S})$.
\item[(5)]   \label{aj5} In particular, $f (D_i^0)\cap f (D_j^0)\neq \{0_{{\bf T}}\}$ implies $i=j$
and  $D_i^0 = D_j^0$.
\item[(6)]  \label{aj6} Given left-(right-)handed primitive SBAs $D_1^0$ and $D_2^0$, a non-zero homomorphism
from $D_1^0$ to $D_2^0$ is any map sending $0$ to $0$, and elements in $D_1$ to elements in $D_2$.
\item[(7)]  \label{aj7} If $D_1$ and $D_2$ are neither both left-handed nor both right-handed, all non-zero homomorphisms $f \colon D_1^0 \to D_2^0$ are obtained as follows:
\begin{enumerate}[(a)]
\item $f(0)=0$.
\item Pick $a\in D_1$, $b\in D_2$, any maps $\lambda\colon L_a \to L_b$, $\rho\colon R_a \to R_b$.
\item Finally,  for all $x \in L_a$ and $y \in R_a$ set $f(x\wedge y) = \lambda(x) \wedge \rho(y)$.
\end{enumerate}
\end{enumerate}
\end{lemma}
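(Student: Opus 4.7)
The plan is to handle the seven parts in order, reducing (1)--(5) to the rectangular-band structure of each $D_i$ together with the automatic orthogonality of distinct atomic ${\mathcal D}$-classes (noted just before Theorem~\ref{th:2.6}), and then treating (6)--(7) via the Kimura parametrization of a rectangular band. For (1), if $d\in D_i$ with $d\mathrel{\theta}0$, then for any $d'\in D_i$ the rectangular identity $d'\wedge d\wedge d' = d'$ gives $d' = d'\wedge d\wedge d' \mathrel{\theta} d'\wedge 0\wedge d' = 0$. For (2), atomicity forces $d_1\wedge d_2 = 0$ whenever $d_1\in D_i, d_2\in D_j, i\neq j$; so $d_1\mathrel{\theta}d_2$ yields $d_1 = d_1\wedge d_1 \mathrel{\theta} d_1\wedge d_2 = 0$, after which (1) gives the rest.

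For (3), parts (1)--(2) imply that on each $D_i^0$ with $i>k$ the restriction $\theta_i = \theta|_{D_i^0\times D_i^0}$ neither sends any element of $D_i$ to $0$ nor relates elements from distinct atomic classes. Writing any $x\in{\bf S}$ uniquely as an orthosum $x_1+\cdots+x_r$ with $x_i\in D_i^0$, one has $x\mathrel{\theta} x_{k+1}+\cdots+x_r$, and the map $\phi\colon {\bf S}\to\prod_{i>k} D_i^0/\theta_i$, $x\mapsto ([x_i]_{\theta_i})_{i>k}$, is a surjective homomorphism (because $\wedge,\vee,\setminus$ act coordinate-wise on orthosums) with kernel exactly $\theta$. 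Hence ${\bf S}/\theta \simeq \sum_{k+1}^r D_i^0/\theta_i$, each summand having exactly two ${\mathcal D}$-classes $\{0\}$ and $D_i/\theta_i$ and so being primitive. Parts (4)--(5) then follow by applying (1)--(3) to $\theta=\ker f$: if $0\neq t\in f(D_i^0)\cap f(D_j^0)$ with $i\neq j$, any preimages $d_i, d_j$ would satisfy $d_i\mathrel{\theta}d_j$, forcing $D_i\subseteq[0]_\theta$ by (2) and contradicting $t\neq 0$.

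For (6), in a same-handed primitive SBA each nonzero ${\mathcal D}$-class is a single ${\mathcal L}$-class (resp.\ ${\mathcal R}$-class), so $\wedge$ acts as a left-zero (resp.\ right-zero) band on $D$, $\vee$ acts dually, and $\setminus$ returns either $0$ or its left argument; any set map $D_1\to D_2$ extended by $0\mapsto 0$ trivially preserves these. For (7), the key tool is the Kimura parametrization: the map $L_a\times R_a\to D_1$, $(x,y)\mapsto x\wedge y$, is a bijection with inverse $z\mapsto(z\wedge a,\, a\wedge z)$, and in a rectangular band iterated products telescope as $pqrs = ps$. Given a non-zero homomorphism $f$, set $b:=f(a)$; since homomorphisms preserve ${\mathcal L}$ and ${\mathcal R}$, the restrictions $\lambda:=f|_{L_a}$ and $\rho:=f|_{R_a}$ land in $L_b$ and $R_b$, and $f(x\wedge y) = f(x)\wedge f(y) = \lambda(x)\wedge\rho(y)$. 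Conversely, for arbitrary $\lambda,\rho$ the telescoping identity in $D_2$ gives $(\lambda(x)\wedge\rho(y))\wedge(\lambda(x')\wedge\rho(y')) = \lambda(x)\wedge\rho(y')$, matching the image of $(x\wedge y)\wedge(x'\wedge y') = x\wedge y'$; the $\vee$-check is dual, and $\setminus$ is automatic since within a single ${\mathcal D}$-class $\setminus$ returns $0$.

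I expect the main bookkeeping burden to lie in (3), where one must verify that the external direct product of primitive quotients genuinely models ${\bf S}/\theta$; this is straightforward but requires patience in tracking how orthosum operations descend through $\theta$. Part (7) is the only other subtle point, and once one invokes the pullback diagram~(\ref{eq:diagram}) together with (6), it amounts to observing that $\lambda$ determines the induced map $D_1^0/{\mathcal R}\to D_2^0/{\mathcal R}$ of left-handed primitive algebras and $\rho$ determines the induced map of right-handed primitive algebras, which then reassemble uniquely through the Kimura pullback.
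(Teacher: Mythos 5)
Your proposal is correct and follows essentially the same route as the paper's (terser) proof: rectangularity plus orthogonality of distinct atomic classes for (1)--(3), applying these to $\ker f$ for (4)--(5), and the description of $0$-preserving homomorphisms of the (left/right) normal $\wedge$-band reducts for (6)--(7), with your use of the bijection $(x,y)\mapsto x\wedge y$ from $L_a\times R_a$ onto $D_1$ matching the paper's well-definedness remark. You merely supply more explicit verifications than the paper (e.g.\ the quotient map in (3), whose kernel claim is easily completed by extracting components via $x\mapsto x\wedge d\wedge x$ for a fixed $d\in D_i$), so the two arguments coincide in substance.
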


\begin{proof} \eqref{aj1} should be clear. For \eqref{aj2} note that $d_1 = d_1\wedge d_1 \mathrel{\theta} d_1\wedge d_2 =0$ and likewise $d_2\mathrel{\theta} 0$. \eqref{aj3} should now be clear, and (4) and (5) follow from \eqref{aj3}. To see (6), note that such a map describes how  homomorphisms which send $0$ to $0$ of left (right) normal $\wedge$-band reducts (which are of left (right) zero semigroups with bottom element $0$ adjoint)  are obtained in general. That it is a homomorphism of skew latices and hence of skew Boolean algebras follows from $x\wedge y=y\vee x$ holding on ${\mathcal{D}}$-classes and properties of $0$. The statement in (7) describes precisely how homomorphisms that send $0$ to $0$ for the normal $\wedge$-band reduct are obtained in general. Again, such a homomorphism must be a homomorphism of skew Boolean algebras. Note that $f$ is well defined, since each $d\in D_1$ is the unique $x\wedge y$ where $x\in L_a$ and $y\in R_a$ ($x=d\wedge a$ and $y=a\wedge d$).
\end{proof}

Of particular interest in Section \ref{s4:free_min_gen_set} are epimorphisms between finite SBAs, which as functions are surjective. (The argument quickly reduces to the primitive case and then
to showing that epimorphisms of rectangular bands - term equivalents of rectangular skew lattices - are surjective, an easy exercise.) Based on Lemma \ref{lem:l:cong}, and particularly on part (6), epimorphisms between orthosums in general arise as follows:

\begin{proposition}\label{prop:2.8}
Given orthosums of primitive algebras ${\bf S} = \sum_1^r  D_{1,i}^0$ and ${\bf T} = \sum_1^r  D_{2,j}^0$,
an epimorphism $f \colon {\bf S} \to {\bf T}$, should it exist, is obtained as follows:
\begin{enumerate}[(i)]
\item Let $f$ send $0_{{\bf S}}$ and all elements in a (possibly empty) union of a subset of the $D_{1,i}$ to $0_{{\bf T}}$.
\item Next, let $f$ send each remaining $D_{1,i}$ homomorphically, as a skew lattice, onto a distinct $D_{2, j}$, with each $D_{2, j}$ being one such image.
\item Finally, let $f$ extend the resulting partial map to all of ${\bf S}$ by orthosums.
\end{enumerate}
\end{proposition}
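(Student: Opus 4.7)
My plan is to derive the description of $f$ from parts (4)--(7) of Lemma \ref{lem:l:cong}, combined with the uniqueness of the internal atomic decomposition (Theorem \ref{th:2.6}). First, I would partition the indices of $\mathbf{S}$ into $I_{0} = \{i : f(D_{1,i}^{0}) = \{0_{\mathbf{T}}\}\}$ and $I_{+} = \{i : f(D_{1,i}^{0}) \neq \{0_{\mathbf{T}}\}\}$. The union $\bigcup_{i \in I_{0}} D_{1,i}$, together with $0_{\mathbf{S}}$, is then part of the fibre above $0_{\mathbf{T}}$, which realises step (i) of the recipe.

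Next, by Lemma \ref{lem:l:cong}(4), $f(\mathbf{S})$ is the orthosum $\sum_{i \in I_{+}} f(D_{1,i}^{0})$, each summand is a primitive subalgebra, and each $f(D_{1,i})$ is an atomic $\mathcal{D}$-class of $f(\mathbf{S})$; by part (5) these atomic $\mathcal{D}$-classes are pairwise distinct. Since $f$ is surjective, $f(\mathbf{S}) = \mathbf{T}$, whose atomic $\mathcal{D}$-classes are exactly the $D_{2,j}$. Uniqueness of the internal atomic decomposition (Theorem \ref{th:2.6}) then yields a bijection $\pi$ between $I_{+}$ and the index set of $\mathbf{T}$ with $f(D_{1,i}) = D_{2,\pi(i)}$. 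For each such $i$, the restriction $f|_{D_{1,i}^{0}}$ is a non-zero homomorphism of primitive SBAs and hence fits one of the patterns catalogued in parts (6) and (7) of Lemma \ref{lem:l:cong}. This delivers step (ii).

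Step (iii) is then bookkeeping: every $s \in \mathbf{S}$ has a unique orthogonal expansion $s = s_{1} + \dots + s_{r}$ with $s_{i} \in D_{1,i}^{0}$, and $\wedge$, $\vee$, $\setminus$ act component-wise on such sums (the lemma preceding Theorem \ref{th:2.6}), so $f(s) = \sum_{i} f(s_{i})$. Hence $f$ is determined globally by the individual summand maps and is correctly recovered by orthosum-extension. The main obstacle is the identification in step (ii): showing that $f$ induces a genuine bijection between the non-collapsed $D_{1,i}$ and all of the $D_{2,j}$, with no two of the former identified and no $D_{2,j}$ omitted. Injectivity of the induced assignment is Lemma \ref{lem:l:cong}(5), while surjectivity rests on the observation that surjectivity of $f$ forces the atomic classes of $f(\mathbf{S})$ to exhaust those of $\mathbf{T}$, an application of the uniqueness of atomic decompositions. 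Once that bijection is secured, the remainder is routine.
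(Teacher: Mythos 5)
Your proposal is correct and follows essentially the route the paper intends: the paper offers no detailed proof, simply asserting that the proposition follows from Lemma \ref{lem:l:cong} (especially parts (4)--(6)) together with the uniqueness of the atomic decomposition, which is exactly the derivation you spell out. Your more explicit bookkeeping — the partition into collapsed and non-collapsed classes, the bijection with the $D_{2,j}$ via Theorem \ref{th:2.6}, and the componentwise extension by orthosums — is a faithful elaboration of that same argument.
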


In the left-handed case, Proposition \ref{prop:2.8}  follows from the description of morphisms in the duality theory of SBAs \cite{Kud,KudLaw} applied to the case where ${\bf S}/{\mathcal D}$ is finite and thus its dual topology is discrete.

We turn to the question when the epimorphism $f$ of Proposition~\ref{prop:2.8} exists. In terms of standard decompositions we have the following simple covering criteria. We begin with the left-handed case. 

Given left-handed algebras
$$
{\bf S}\simeq {\bf m_1}_{L}\times \dots \times {\bf m_r}_{L} \,\, \text{ and } \,\, {\bf T}\simeq {\bf n_1}_{L}\times \dots \times {\bf n_s}_{L}
$$
with $m_1 \leq m_2 \leq \dots \leq m_r$ and $n_1 \leq n_2 \leq \dots \leq n_s$, it is clear that an
epimorphism $f \colon {\bf S} \to {\bf T}$ exists if and only if:
\begin{enumerate}
\item $r \geq s$.
\item $m_r \geq n_s$, $m_{r-1} \geq n_{s-1}$, $m_{r-2} \geq n_{s-2}$, $\dots$, $m_{r-(s-1)} \geq n_1$.
\end{enumerate}

Thus in terms of atomic ${\mathcal D}$-classes, ${\bf S}$ has at least as many atomic classes as ${\bf T}$, and
each atomic class in ${\bf T}$ can be matched with a distinct atomic class in ${\bf S}$ of equal or greater size. These
conditions are also sufficient to guarantee that ${\bf T}$ can be isomorphically embedded in ${\bf S}$. They are not necessary however since ${\bf 4}_L$ can be embedded in ${\bf 3}_L\times {\bf 3}_L$.

The right-handed case is dual. Consider now the general case where
$$
{\bf S}\simeq ({\bf m_1}_{L}\bullet  {\bf m'_1}_{R})\times \dots \times ({\bf m_r}_{L}\bullet  {\bf m'_r}_{R})\,\, \text{ and } \,\, {\bf T}\simeq ({\bf n_1}_{L}\bullet {\bf n'_1}_{R})\times \dots \times ({\bf n_s}_{L}\bullet {\bf n'_s}_{R}).
$$
Here for an epimorphism $f\colon {\bf S}\to {\bf T}$ to exist it is necessary and sufficient that:
\begin{enumerate}
\item $r \geq s$.
\item Each ${\bf n_j}_{L}\bullet {\bf n'_j}_{R}$ factor of ${\bf T}$ can be matched off with a unique ${\bf m_i}_{L}\bullet  {\bf m'_i}_{R}$ factor of ${\bf S}$ such that $m_i \geq n_j$ and $m'_i\geq n'_j$.
\end{enumerate}

When a surjective homomorphism $f\colon {\bf S}\to {\bf T}$  exists, ${\bf S}$ is said to {\em cover} ${\bf T}$. ${\bf S}$ is a {\em free cover} of ${\bf T}$ if ${\bf S}$ is free and covers ${\bf T}$, and is of minimal size amongst such covers. For finite SBAs, their free covers are unique to within isomorphism. In the Section \ref{s4:free_min_gen_set} we will be interested in free covers of finite SBAs.

\section{Free algebras: the finite case}\label{s3:finite}
Given a non-empty set $X$:
$$ 
\begin{array}{ll}
\,\,\, {\bf{SBA}}_X & \text{is the free skew Boolean algebra on } X.\\
_{\mathcal R}{\bf{SBA}}_X & \text{is the free right-handed skew Boolean algebra on } X.\\
_{\mathcal L}{\bf{SBA}}_X & \text{is the free left-handed skew Boolean algebra on } X.\\
\,\,\,{\bf{GBA}}_X & \text{is the free generalized Boolean algebra on } X.\\
\end{array}
$$

 Free algebras are, of course, unique to within isomorphism. Thus if we say `the free' we have in mind a particular concrete instance, from which we are free (in an alternative sense) to find other isomorphic variants. In this paper, the default free algebra ${\bf{F}}_X$ on an alphabet $X$ is the algebra of all terms (or polynomials) in $X$. In the current context, the terms are defined inductively as follows.
\begin{enumerate}
\item Each $x$ in $X$ is a term, as is the constant $0$.
\item If $u$ and $v$ are terms, so are $(u\vee v)$, $(u\wedge v)$ and $(u\setminus v)$.
\end{enumerate}
Two terms, $u$ and $v$, are equivalent in ${\bf{F}}_X$ if and only if  $u = v$ is an identity in the given variety of algebras. Clearly these criteria for equivalence differ among the four varieties of interest. Given an SBA equation of terms in $X$, $u = v$, one can check if it is an $_{\mathcal L}{\mathrm{SBA}}$ identity (or an $_{\mathcal R}{\mathrm{SBA}}$ identity) by seeing if it holds for all evaluations on ${\bf 3}_L$ (or on ${\bf 3}_R$). It is an SBA identity precisely when it holds for all evaluations on both ${\bf 3}_L$ and ${\bf 3}_R$. Finally, it is a GBA identity if and only if it holds for all evaluations on ${\bf 2}$. In our considerations, we are free to relax aspects of the syntax for parentheses if all ways of reinserting them lead to equivalent expressions. E.g., that would happen with $x\vee y\vee z$, but not with $x\wedge y\vee z$.

Given the universal character of the homomorphisms involved in the Clifford-McLean and the Kimura Factorization theorems for skew Boolean algebras, we have:
$$
\begin{array}{c} \vspace{0.1cm}
{\bf{GBA}}_X \, \simeq  \, {\bf{SBA}}_X/{\mathcal D} \, \simeq \, _{\mathcal R}{\bf{SBA}}_X/{\mathcal D}\,  \simeq \, _{\mathcal L}{\bf{SBA}}_X/{\mathcal D},\\ 

\vspace{0.1cm}
_{\mathcal R}{\bf{SBA}}_X \, \simeq \, {\bf{SBA}}_X/{\mathcal L}  \,\, \text{ and }\,\, _{\mathcal L}{\bf{SBA}}_X \, \simeq \, {\bf{SBA}}_X/{\mathcal R},\\ 

{\bf{SBA}}_X\simeq \, _{\mathcal L}{\bf{SBA}}_X\times_{{\bf{GBA}}_X } \, _{\mathcal R}{\bf{SBA}}_X.
\end{array}
$$
(Let ${\mathcal V}$ be any variety of algebras with ${\mathcal W}$ a subvariety of ${\mathcal V}$. For each algebra ${\bf A}$ of ${\mathcal V}$, let $\theta_{\bf A}$ be the congruence on ${\bf A}$ such that ${\bf A}/\theta_{\bf A}$ is in ${\mathcal W}$ and the induced map $\varphi_{\bf A}\colon {\bf A} \to {\bf A}/\theta_{\bf A}$ is a universal homomorphism from ${\bf A}$ to ${\mathcal W}$. Then if ${\bf A}$ is a free ${\mathcal V}$-algebra on generating set $X$, then ${\bf A}/\theta_{\bf A}$ is a free ${\mathcal W}$-algebra on generating set $\varphi_{\bf A}(X)$. In the above context, $\theta_{\bf A}={\mathcal D}, {\mathcal L}$ or ${\mathcal R}$ as appropriate, with $X$ and $\varphi_{\bf A}(X)$ equipotent under $\varphi_{\bf A}$.)

In what follows we first consider ${\bf{SBA}}_n$, $_{\mathcal L}{\bf{SBA}}_n$,  etc. which denote  ${\bf{SBA}}_X$, $_{\mathcal L}{\bf{SBA}}_X$,  etc.  on alphabet $X = \{x_1, x_2, \dots , x_n\}$. Their standard atomic decomposition is given in Theorem \ref{th:3.3} below. But to obtain the latter we need to understand their atomic structure. The case for $_{\mathcal L}{\bf{SBA}}_n$ and for $_{\mathcal R}{\bf{SBA}}_n$ is described in Theorem \ref{th:3.2}, the content of which is our immediate goal. We focus on $_{\mathcal L}{\bf{SBA}}_n$. Since $_{\mathcal L}{\bf{SBA}}_n$ has finitely
many generators, it is finite and thus is determined by its atomic ${\mathcal D}$-classes. We first describe these classes. Each class consists of atoms all sharing a common form. The justification that they are indeed the atomic ${\mathcal D}$-classes will
follow. They are the $2^n-1$ classes of one of the forms below where $y_1, y_2, \dots , y_n$ in the
table represents an arbitrary permutation of $x_1, x_2, \dots , x_n$. A typical class arises from a partition $\{L|M\}$ of $\{x_1, x_2, \dots, x_n\}$ with $k \geq 1$ elements in $L$ and $n-k$ elements in $M$ used to form the term $$(y_1\wedge  \dots \wedge y_k) \setminus (y_{k+1}\vee \dots \vee y_n).$$ This partition is ordered in that $\{L|M\}$ is distinct from $\{M|L\}$. Thus, e.g., $\{1, 2|3, 4\} \neq \{3, 4|1, 2\}$.
\vspace{0.2cm}
\begin{center}
{\renewcommand{\arraystretch}{1.2}
\begin{tabular}{c|c|c}\renewcommand{\arraystretch}{2}\addtolength{\tabcolsep}{1pt}
Form type & Number of classes of this form & Class size \\[3pt]
\hline
$y_1\setminus (y_2\vee y_3\vee \dots \vee y_n)$ & $n=\binom{n}{1}$ & $1$\\[3pt]
\hdashline
$(y_1\wedge y_2)\setminus (y_3\vee y_4\vee \dots \vee y_n)$ & $\binom{n}{2}$ & $2$\\[3pt]
\hdashline
$(y_1\wedge y_2\wedge y_3)\setminus (y_4\vee y_5\vee \dots \vee y_n)$ & $\binom{n}{3}$ & $3$\\[3pt]
\hdashline
$\dots$ & $\dots$ & $\dots$\\[3pt]
\hdashline
$y_1\wedge y_2\wedge y_3\wedge \dots\wedge y_n$ & $1=\binom{n}{n}$ & $n$
\end{tabular}}
\end{center}
\vspace{0.2cm}

Given the left-handed  identity $x\wedge y\wedge z = x\wedge z\wedge y$ and the two-sided identities
\begin{equation}\label{eq:1}
x\setminus (y\vee z) = x\setminus (z\vee y) = (x\setminus y)\setminus z = (x\setminus z)\setminus y
\end{equation}
(easily checked on ${\bf 3}_L$ or on ${\bf 3}_L$ and ${\bf 3}_R$, respectively), $(y_1\wedge \dots \wedge y_k)\setminus (y_{k+1}\vee \dots \vee y_n)$ is invariant in outcome under any permutation of $y_2, \dots, y_k$ or of $y_{k+1}, \dots, y_n$. What does distinguish the elements in each class is the left-most element or variable, $y_1$. In all, a total of $n2^{n-1}$ essentially distinct atoms exist to produce $n2^{n-1}$ $n$-variable functions on ${\bf 3}_L$ (or on ${\bf 3}_R$). This is verified in the proof of Theorem~\ref{th:3.2} below. But first we provide an example.

\begin{example}\label{ex:3.1} For $X = \{x, y, z, w\}$, the $15$ atomic classes and the $4\cdot 2^3 = 32$ atoms are:
\begin{gather*}
\{x\setminus (y\vee z\vee w)\}, \,\,\, \{y\setminus (x\vee z\vee w)\}, \,\,\, \{z\setminus (x\vee y\vee w)\}, \,\,\, \{w\setminus (x\vee y\vee z)\},\\
\{(x\wedge y)\setminus (z\vee w), (y\wedge x)\setminus (z\vee w)\}, \quad \{(x\wedge z)\setminus (y\vee w), (z\wedge x)\setminus (y\vee w)\},\\
\{(x\wedge w)\setminus (y\vee z), (w\wedge x)\setminus (y\vee z)\}, \quad \{(y\wedge z)\setminus (x\vee w), (z\wedge y)\setminus (x\vee w)\},\\
\{(y\wedge w)\setminus (x\vee z), (w\wedge y)\setminus (x\vee z)\}, \quad \{(z\wedge w)\setminus (x\vee y), (w\wedge z)\setminus (x\vee y)\},\\
\{(y\wedge z\wedge w)\setminus x, (z\wedge y\wedge w)\setminus x, (w\wedge y\wedge z)\setminus x\},\\
\{(x\wedge z\wedge w)\setminus y, (z\wedge x\wedge w)\setminus y, (w\wedge x\wedge z)\setminus y\}, \\
\{(x\wedge y\wedge w)\setminus z, (y\wedge x\wedge w)\setminus z, (w\wedge x\wedge y)\setminus z\}, \\
\{(x\wedge y\wedge z)\setminus w, (y\wedge x\wedge z)\setminus w, (z\wedge x\wedge y)\setminus w\},\\
\{x\wedge y\wedge z\wedge w,  y\wedge x\wedge z\wedge w, z\wedge x\wedge y\wedge w, w\wedge x\wedge y\wedge z\}.
\end{gather*}
\end{example}

\begin{theorem}\label{th:3.2} Given the free left-handed skew Boolean algebra $_{\mathcal L}{\bf{SBA}}_n$ on $\{x_1, \dots , x_n\}:$
\begin{enumerate}[(i)]
\item $_{\mathcal L}{\bf{SBA}}_n$ is a finite algebra whose atoms are the terms $(y_1\wedge \dots \wedge y_k)\setminus (y_{k+1}\vee\dots\vee y_n)$
where $k \geq1$ and $(y_1, \dots , y_n)$ is a permutation of $\{x_1, \dots , x_n\}$.
\item Atoms $(y_1\wedge \dots \wedge y_k)\setminus (y_{k+1}\vee\dots\vee y_n)$ and $(z_1\wedge \dots \wedge z_l)\setminus (z_{l+1}\vee\dots\vee z_n)$ lie in the same atomic class if and only if $k=l$, $(z_1, \dots , z_k)$ is a permutation of $\{y_1, \dots , y_k\}$ and
thus $(z_{l+1}, \dots , z_n)$ is a permutation of $\{y_{k+1},\dots , y_n\}$.
\end{enumerate}
\begin{enumerate}
\item[$(iii)_{\mathcal L}$] $(y_1\wedge \dots \wedge y_k)\setminus (y_{k+1}\vee\dots\vee y_n) = (z_1\wedge \dots \wedge z_l)\setminus (z_{l+1}\vee\dots\vee z_n)$ if besides (ii), $y_1 = z_1$.
\end{enumerate}
For the free right-handed dual algebra $_{\mathcal R}{\bf{SBA}}_n$, (i) and (ii) again hold along with:
\begin{enumerate}
\item[$(iii)_{\mathcal R}$] $(y_1\wedge \dots \wedge y_k)\setminus (y_{k+1}\vee\dots\vee y_n) = (z_1\wedge \dots \wedge z_l)\setminus (z_{l+1}\vee\dots\vee z_n)$ if in addition to~(ii), $y_k = z_k$.
\end{enumerate}
\end{theorem}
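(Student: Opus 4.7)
My plan is to localize the atomic structure via the canonical quotient to ${\bf GBA}_n$, produce the claimed atoms in each atomic ${\mathcal D}$-class $D_L$, and then force $|D_L|=|L|$ by counting homomorphisms into ${\bf 3}_L$. By Corollary~\ref{for:lf}, $_{\mathcal L}{\bf{SBA}}_n$ is finite, so by Theorem~\ref{th:2.6} it is an orthosum of primitive subalgebras $D_L^0$ indexed by its atomic ${\mathcal D}$-classes. Since the canonical map ${\bf S}\to {\bf S}/{\mathcal D}$ is universal to lattices, $_{\mathcal L}{\bf{SBA}}_n/{\mathcal D}\simeq {\bf GBA}_n$, whose $2^n-1$ atoms are the standard minterms $(\bigwedge L)\setminus(\bigvee M)$ indexed by the nonempty $L\subseteq X$ with $M=X\setminus L$. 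Hence the atomic ${\mathcal D}$-classes of $_{\mathcal L}{\bf{SBA}}_n$ are precisely the preimages $D_L$, and any listed term with $\{y_1,\dots,y_k\}=L$ and $\{y_{k+1},\dots,y_n\}=M$ projects to the $L$-atom and so lies in $D_L$, settling (ii).

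Next I would count atoms inside each $D_L$. The left-handed identity $x\wedge y\wedge z=x\wedge z\wedge y$ (easily checked on ${\bf 3}_L$, hence valid throughout $_{\mathcal L}{\bf SBA}_n$) combined with (\ref{eq:1}) shows that the listed term depends only on the triple $(y_1, L, M)$, so $D_L$ contains at most $|L|$ atoms of the listed form, parametrized by the leftmost variable $y_1$. For the lower bound and the forward direction of (iii)$_L$, I would evaluate into ${\bf 3}_L$ via $\phi(y_1)=1$, $\phi(y)=2$ for $y\in L\setminus\{y_1\}$, and $\phi(y)=0$ for $y\in M$: the given term evaluates to $\phi(y_1)=1$, while a competing term with a different leftmost $z_1\in L$ evaluates to $\phi(z_1)=2$. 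This yields $|D_L|\geq|L|$.

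The main obstacle is the matching upper bound $|D_L|\leq|L|$, which rules out any spurious atoms outside the listed form; I would establish it via a homomorphism count. By freeness, $|\mathrm{Hom}(_{\mathcal L}{\bf{SBA}}_n,{\bf 3}_L)|=|{\bf 3}_L|^n=3^n$. By Lemma~\ref{lem:l:cong}(4)--(5) and the orthosum structure, every nonzero such homomorphism is supported on exactly one summand $D_L^0$; by Lemma~\ref{lem:l:cong}(6) the nonzero homomorphisms $D_L^0\to{\bf 3}_L$ correspond bijectively to arbitrary maps $D_L\to\{1,2\}$ and thus number $2^{|D_L|}$. Therefore
\begin{equation*}
3^n \;=\; 1+\sum_{\varnothing\neq L\subseteq X}2^{|D_L|}\;\geq\;1+\sum_{k=1}^{n}\binom{n}{k}2^k\;=\;3^n,
\end{equation*}
forcing $|D_L|=|L|$ for every $L$. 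Hence the $|L|$ explicit atoms exhaust $D_L$, completing (i) and (iii)$_L$. The right-handed statement (iii)$_R$ follows by the dual argument, replacing the left-handed identity with $x\wedge y\wedge z=y\wedge x\wedge z$ and evaluating on ${\bf 3}_R$, where meets of nonzero elements return the rightmost rather than the leftmost factor.
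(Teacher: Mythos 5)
Your proposal is correct, but it reaches the key step by a genuinely different route than the paper. Where you and the paper agree: the left-normality identity $x\wedge y\wedge z=x\wedge z\wedge y$ together with \eqref{eq:1} shows each listed term depends only on its leftmost variable and the partition $\{L|M\}$ (giving (iii)$_{\mathcal L}$), and the evaluation into ${\bf 3}_L$ sending $y_1\mapsto 1$, the rest of $L$ to $2$ and $M$ to $0$ separates terms with different leading variables. The difference is in proving that nothing else occurs: the paper shows directly that the subalgebra generated by the listed terms is all of ${}_{\mathcal L}{\bf{SBA}}_n$, by resolving each generator, via repeated use of $x=(x\wedge y)+(x\setminus y)$ and \eqref{eq:1}, into an orthosum of the $2^{n-1}$ listed atoms with that generator leftmost; this simultaneously identifies the atomic ${\mathcal D}$-classes and shows they are full. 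You instead identify the atomic classes through the quotient ${}_{\mathcal L}{\bf{SBA}}_n/{\mathcal D}\simeq{\bf{GBA}}_n$ (using the classical minterm description of the free GBA, which the paper never needs) and then force $|D_L|=|L|$ from the exact count $3^n=1+\sum_L 2^{|D_L|}$ of homomorphisms into ${\bf 3}_L$, via Lemma~\ref{lem:l:cong}(6) and the observation that such a homomorphism can be nonzero on at most one orthosummand; that last point deserves an explicit sentence (nonzero elements of ${\bf 3}_L$ are never orthogonal, so orthogonal classes cannot both map nontrivially), since parts (4)--(5) of the lemma you cite do not literally state it, and you also implicitly use that a homomorphism supported on $D_L^0$ factors through the projection of the internal direct product. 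Your counting argument is slick and yields the cardinality formula of Corollary~\ref{cor:3.4} as a byproduct, at the cost of importing the free-GBA structure; the paper's generation argument is more self-contained and, importantly, produces the explicit orthosum resolution of the generators, which is reused later (the normal forms, the minimal-generating-set computations in Section~\ref{s4:free_min_gen_set}, and the atom-splitting analysis in Section~\ref{s5:atom_splitting}). Your treatment of the right-handed case by the dual identity $x\wedge y\wedge z=y\wedge x\wedge z$ and evaluation on ${\bf 3}_R$ is fine, and matches the paper's appeal to term equivalence.
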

\begin{proof} 
We consider the left-handed case. The right-handed assertion is similar.
To begin, given a permutation $(z_1, \dots, z_k)$ of the set $\{y_1, \dots , y_k\}$, $(z_1\wedge \dots \wedge z_k)\setminus (y_{k+1}\vee\dots\vee y_n)$ and
$(y_1\wedge \dots \wedge y_k)\setminus (y_{k+1}\vee\dots\vee y_n)$ are ${\mathcal L}$-related; they are not equal if $z_1 \neq y_1$. Indeed, $y_1\wedge \dots \wedge y_k \mathrel{{\mathcal L}} z_1\wedge \dots \wedge z_k$ plus $(a\setminus c)\wedge (b\setminus c) = (a\wedge b)\setminus c$ implies they are ${\mathcal L}$-related; they are not equal if $z_1 \neq y_1$ since they are not equal when operating as functions on ${\bf 3}_L$. Just give $y_1$ and $z_1$ values $1$ and $2$, respectively, the remaining front variables $1$, and all $n-k$ back variables $0$. The outcome for $(y_1\wedge \dots \wedge y_k)\setminus (y_{k+1}\vee\dots\vee y_n)$ is $1$ and for $(z_1\wedge \dots \wedge z_k)\setminus (y_{k+1}\vee\dots\vee y_n)$ is~$2$.

In general, given distinct partitions $\{L|M\}$ and $\{L'|M'\}$ of $\{x_1, x_2, \dots, x_n\}$ with $L$ and $L'$ non-empty, some element $m$ lies in $L\cap M'$ or in $L'\cap M$, say the former. Viewing $m$ as a generator, given any $\{L|M\}$-term $u$ and any $\{L'|M'\}$-term $v$, we have $u\wedge m = u$ but $v\wedge m = 0 = m\wedge v$. Thus $u\wedge v = u\wedge m\wedge v = u\wedge 0 = 0 = v\wedge u$. Thus all $\{L|M\}$-terms are orthogonal to all $\{L'|M'\}$-terms. Since $(x_1\wedge \dots \wedge x_k)\setminus (x_{k+1}\vee\dots\vee x_n)=0$ is not an identity in ${\bf 3}_L$ for $k \geq 1$, all $\{L|M\}$-classes are non-zero classes and distinct $\{L|M\}$-classes are orthogonal. (Returning to the example above, $\{(x\wedge y)\setminus (z\vee w), (y\wedge x)\setminus (z\vee w)\}$ is disjoint from $\{(x\wedge w)\setminus (y\vee z), (w\wedge x)\setminus (y\vee z)\}$ with pairs from distinct classes being orthogonal.)

To see that they are full ${\mathcal D}$-classes of  $_{\mathcal L}{\bf{SBA}}_n$  and that they are (all the) atomic ${\mathcal D}$-classes, observe first that  they are the atomic ${\mathcal D}$-classes in the subalgebra of  $_{\mathcal L}{\bf{SBA}}_n$  that they generate. We need to show that this subalgebra is in fact all of  $_{\mathcal L}{\bf{SBA}}_n$ . We do so by
showing that each generator $x_k$ of  $_{\mathcal L}{\bf{SBA}}_n$  is in the generated subalgebra. The identities \eqref{eq:1} give us:
\begin{align*}
x_1 & =  (x_1\wedge x_2) + (x_1\setminus x_2) \\
& = (x_1\wedge x_2\wedge x_3) + ((x_1\wedge x_2)\setminus x_3) + ((x_1\setminus x_2)\wedge x_3) + ((x_1\setminus x_2)\setminus x_3)\\
& = (x_1\wedge x_2\wedge x_3) + ((x_1\wedge x_2)\setminus x_3) + ((x_1\wedge x_3)\setminus x_2) + (x_1\setminus (x_2\vee x_3))\\
& = (x_1\wedge x_2\wedge x_3\wedge x_4) + ((x_1\wedge x_2\wedge x_3)\setminus x_4) + \dots
\end{align*}
The process keeps repeating on each new term until generator $x_1$ is resolved into an orthosum of $2^{n-1}$ $\{L|M\}$-type terms - indeed into all the $\{L|M\}$-type terms with leftmost entry $x_1$. Similar calculations work for the remaining generators. Thus the $2^{n-1}$ distinct
$\{L|M\}$-classes are all the atomic ${\mathcal D}$-classes of $_{\mathcal L}{\bf{SBA}}_n$. 

 The right-handed assertion follows from the term equivalence of the two types of algebras.
\end{proof}

In the generalized Boolean case, all atomic terms resulting from the same $\{L|M\}$-decomposition are equated. Thus the particular left-most generator/variable no longer differentiates among outcomes. In the two-sided case, in the Kimura fibered product construction each left-handed atomic class is matched off with the right-handed atomic class with the same $\{L|M\}$ partition. In this case the data of $$((y_1\wedge \dots \wedge y_k), (y'_1\wedge \dots \wedge y'_k))$$ can be combined as $y_1\wedge \dots \wedge y_k\wedge y'_1\wedge \dots \wedge y'_k$ and then reduced via two-sided normality. 

Returning to Example \ref{ex:3.1}, the terms there describe the atomic classes of both the left- and right-handed free algebras on $\{x, y, z, w\}$. Thus $\{(x\wedge y)\setminus (z\vee w), (y\wedge x)\setminus (z\vee w)\}$ works in the left-handed case, while $\{(x\wedge y)\setminus (z\vee w), (y\wedge x)\setminus (z\vee w)\}$ works in the right-handed case. In both finite cases it is possible to describe the `atomic' terms using cyclic permutations in a way that the terms do double duty. But that will not `stretch' to the two-sided case. Here we adjoin both $(x\wedge y\wedge x)\setminus (z\vee w)$ and $(y\wedge x\wedge y)\setminus (z\vee w)$ to the class to get:
$$\{(x\wedge y)\setminus (z\vee w), (y\wedge x)\setminus (z\vee w), (x\wedge y\wedge x)\setminus (z\vee w), (y\wedge x\wedge y) \setminus (z\vee w)\}.$$
For two terms to be equal in value, both end variables in the left part would have to agree. In general, the corresponding atomic classes would be squared in size.

We thus obtain precise structural descriptions of all four relevant free algebras. In what follows $D_{\{L|M\}}$ is the $\{L|M\}$-induced ${\mathcal D}$-class ($={\mathcal L}$-class), ${\bf P}^{\mathcal L}_{\{L|M\}}$ is the left-handed
primitive algebra $D_{\{L|M\}}^0$ and  ${\bf P}^{\mathcal R}_{\{L|M\}}$ is its right-handed counterpart. Also, given
primitive algebras ${\bf P}$ and ${\bf Q}$, ${\bf P}\bullet {\bf Q}$ denotes their {\em fibered product} over ${\bf 2}$, ${\bf P}\times_{{\bf 2}}{\bf Q}$. In the next theorem, the trivial algebra ${\bf 1}$ on $\{0\}$ is included to allow the full distribution of binomial coefficients. This factor corresponds to the front-empty partition $\{\varnothing|X\}$. Hence:

\begin{theorem}\label{th:3.3} The free left-handed skew Boolean algebra $_{\mathcal L}{\bf{SBA}}_n$ on $\{x_1, \dots , x_n\}$ is a direct sum of the primitive algebras ${\bf P}^{\mathcal L}_{\{L|M\}}$ where $\{L|M\}$ ranges over all partitions $\{L|M\}$ of $\{x_1, \dots , x_n\}$ where $L \neq \varnothing$. Thus:
$$
_{\mathcal L}{\bf{SBA}}_n \simeq {\bf 1}^{\binom{n}{0}}\times {\bf 2}^{\binom{n}{1}} \times {\bf 3}_L^{\binom{n}{2}}\times {\bf 4}_L^{\binom{n}{3}} \times\dots \times {\bf{(n+1)}}_L^{\binom{n}{n}}.
$$

Dually, the free right-handed skew Boolean algebra $_{\mathcal R}{\bf{SBA}}_n$ on $\{x_1, \dots , x_n\}$ is a direct sum of the primitive algebras ${\bf P}^{\mathcal R}_{\{L|M\}}$ where $\{L|M\}$ shares the same range. Thus:
$$
_{\mathcal R}{\bf{SBA}}_n \simeq {\bf 1}^{\binom{n}{0}}\times {\bf 2}^{\binom{n}{1}} \times {\bf 3}_R^{\binom{n}{2}}\times {\bf 4}_R^{\binom{n}{3}} \times\dots \times {\bf{(n+1)}}_R^{\binom{n}{n}}.
$$
Finally, the free skew Boolean algebra ${\bf{SBA}}_n$ on $\{x_1, \dots , x_n\}$ is a direct sum of the primitive algebras ${\bf P}^{\mathcal L}_{\{L|M\}}\bullet {\bf P}^{\mathcal R}_{\{L|M\}}$ where $\{L|M\}$ again shares the same range. Thus:
$$
{\bf{SBA}}_n \simeq {\bf 1}^{\binom{n}{0}}\times {\bf 2}^{\binom{n}{1}} \times ({\bf 3}_L\bullet {\bf 3}_R)^{\binom{n}{2}}\times ({\bf 4}_L\bullet {\bf 4}_R)^{\binom{n}{3}} \times\dots \times ({\bf{(n+1)}}_L\bullet {\bf{(n+1)}}_R)^{\binom{n}{n}}.
$$
\end{theorem}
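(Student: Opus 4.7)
The plan is to read the decomposition directly off the orthosum theorem (Theorem \ref{th:2.6}) once the atomic data is supplied by Theorem \ref{th:3.2}. Since $_{\mathcal L}{\bf{SBA}}_n$ is finite, Theorem \ref{th:2.6} presents it as the orthosum $\sum D^0_{\{L|M\}}$ of the primitive subalgebras associated with its atomic $\mathcal{D}$-classes. For a partition $\{L|M\}$ with $|L| = k \geq 1$, Theorem \ref{th:3.2}(ii) and $(iii)_{\mathcal L}$ says the class $D_{\{L|M\}}$ has exactly $k$ elements, one atom for each possible leftmost variable drawn from $L$. Hence $D^0_{\{L|M\}}$ is a $(k+1)$-element left-handed primitive SBA, i.e.\ $D^0_{\{L|M\}} \simeq {\bf(k+1)}_L$. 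Since there are exactly $\binom{n}{k}$ ordered partitions of $\{x_1,\ldots,x_n\}$ with $|L|=k$, grouping by $k$ gives
$$
_{\mathcal L}{\bf{SBA}}_n \simeq \prod_{k=1}^{n} {\bf(k+1)}_L^{\binom{n}{k}}.
$$
The formal factor ${\bf 1}^{\binom{n}{0}}$ corresponds to the ``absent'' partition $\{\varnothing | X\}$; it is the trivial algebra and is included only so the binomial coefficients run over the full range $k = 0,\ldots,n$.

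The right-handed statement follows by applying the duality ${\bf S} \mapsto {\bf S}^{*}$, which is a term-equivalence and sends $_{\mathcal L}{\bf{SBA}}_n$ to $_{\mathcal R}{\bf{SBA}}_n$ and each ${\bf (k+1)}_L$ to ${\bf (k+1)}_R$. For the two-sided case I would invoke the Kimura pullback from the opening of Section \ref{s3:finite},
$$
{\bf{SBA}}_n \simeq {_{\mathcal L}{\bf{SBA}}_n} \times_{{\bf{GBA}}_n} {_{\mathcal R}{\bf{SBA}}_n},
$$
together with the already-known decomposition ${\bf{GBA}}_n \simeq \prod_{k=0}^n {\bf 2}^{\binom{n}{k}}$ (this is the classical computation for free GBAs, and also follows by specialising the argument above to the commutative case). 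Under the canonical quotient $_{\mathcal L}{\bf{SBA}}_n \tto {\bf{GBA}}_n$, the left-handed primitive factor ${\bf P}^{\mathcal L}_{\{L|M\}}$ indexed by $\{L|M\}$ maps onto the single $\{L|M\}$-indexed copy of ${\bf 2}$, and likewise for the right-handed factor. Pullback being computed coordinatewise on orthosums, the pullback then splits as
$$
{\bf{SBA}}_n \simeq \prod_{\{L|M\}} \bigl({\bf P}^{\mathcal L}_{\{L|M\}} \times_{{\bf 2}} {\bf P}^{\mathcal R}_{\{L|M\}}\bigr) = \prod_{k=0}^{n}\bigl({\bf(k+1)}_L \bullet {\bf(k+1)}_R\bigr)^{\binom{n}{k}},
$$
which is the claimed standard decomposition.

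The one nontrivial point—and the main obstacle—is justifying that the Kimura pullback distributes across the orthogonal decompositions the way the display above asserts. Concretely, I need to verify that the two atomic $\mathcal{D}$-classes (one on each side) that get matched by the pullback are precisely the pair indexed by the same partition $\{L|M\}$. This reduces to checking that the atomic term $(y_1 \wedge \cdots \wedge y_k) \setminus (y_{k+1} \vee \cdots \vee y_n)$ has the same image in ${\bf{GBA}}_n$ in both handed free algebras, which is automatic: the $\mathcal{D}$-quotient collapses the order of the conjunction, so every choice of leftmost variable yields the same commutative atom of ${\bf{GBA}}_n$. Once this matching is in place, the factorwise computation of the fibered product is routine and the three formulas fall out simultaneously.
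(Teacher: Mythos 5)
Your argument is correct and is essentially the paper's own route: the paper derives Theorem \ref{th:3.3} directly from the atomic data of Theorem \ref{th:3.2} together with the orthosum decomposition of Theorem \ref{th:2.6}, and handles the two-sided case exactly as you do, via the Kimura fibered product with each left-handed atomic class matched to the right-handed class of the same partition $\{L|M\}$ (the matching being forced because both classes have the same image in ${\bf{GBA}}_n$). Your extra care about the pullback distributing over the factors fills in what the paper leaves as an informal remark, but it is the same proof.
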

\begin{corollary}\label{cor:3.4}
For all $n \geq 1$:
\begin{enumerate}[(i)]
\item $$
|_{\mathcal L}{\bf{SBA}}_n| = 2^{\binom{n}{1}}3^{\binom{n}{2}}4^{\binom{n}{3}}\dots (n+1)^{\binom{n}{n}}.
$$
\item $$
|{\bf{SBA}}_n|  = 2^{\binom{n}{1}}5^{\binom{n}{2}}10^{\binom{n}{3}}\dots (n^2+1)^{\binom{n}{n}}.
$$
\end{enumerate}
Moreover, if $\alpha_L(n)$, $\alpha_R(n)$ and $\alpha(n)$ denote the number of atoms in $_{\mathcal L}{\bf{SBA}}_n$, $_{\mathcal R}{\bf{SBA}}_n$  and ${\bf{SBA}}_n$, respectively, then:
\begin{enumerate}
\item[(iii)]  $$\alpha_L(n)=\alpha_R(n)=\binom{n}{1}1+\binom{n}{2}2+ \dots +\binom{n}{n-1}(n-1)+\binom{n}{n}n.$$
\item[(iv)] $$\alpha(n)=\binom{n}{1}1+\binom{n}{2}4+\dots +\binom{n}{n-1}(n-1)^2+\binom{n}{n}n^2.$$
\end{enumerate}

\end{corollary}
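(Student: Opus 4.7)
The plan is to read off all four formulas directly from the structural decompositions in Theorem \ref{th:3.3}, the only real computation being the cardinality of each primitive factor (and, for the two-sided case, of the fibered product ${\bf k}_L\bullet{\bf k}_R$).

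For (i), Theorem \ref{th:3.3} gives $_{\mathcal L}{\bf{SBA}}_n \simeq \prod_{k=0}^{n} {\bf (k+1)}_L^{\binom{n}{k}}$, so I would just take cardinalities factor by factor. Since $|{\bf (k+1)}_L|=k+1$ and the $k=0$ factor is the trivial algebra ${\bf 1}$, which contributes a factor of $1^{\binom{n}{0}}=1$, the product collapses to $2^{\binom{n}{1}}3^{\binom{n}{2}}\cdots (n+1)^{\binom{n}{n}}$ as claimed.

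For (ii), I would first compute $|{\bf (k+1)}_L\bullet{\bf (k+1)}_R|=|{\bf (k+1)}_L\times_{\bf 2}{\bf (k+1)}_R|$. An element of the pullback is a pair $(a,b)\in {\bf (k+1)}_L\times {\bf (k+1)}_R$ with the same image in ${\bf 2}$; equivalently, either both coordinates are $0$ or both lie in the unique non-zero ${\mathcal D}$-class, giving $1+k^2$ elements. Plugging in the decomposition of ${\bf{SBA}}_n$ from Theorem \ref{th:3.3} and multiplying factor by factor yields (ii), with the explicit values $1+1^2=2$, $1+2^2=5$, $1+3^2=10$, \dots, $1+n^2$.

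For (iii) and (iv), the atoms of a direct product of SBAs are exactly the atoms of the individual factors (embedded canonically), so $\alpha_L(n)$, $\alpha_R(n)$ and $\alpha(n)$ are each the sum over $k\geq 1$ of $\binom{n}{k}$ times the number of atoms in a single primitive factor. A primitive algebra ${\bf m}_L$ or ${\bf m}_R$ or ${\bf m}_L\bullet{\bf m}_R$ has exactly as many atoms as non-zero elements, namely $m-1$ in the one-sided cases and $(m-1)^2$ in the fibered product (as computed above). Taking $m=k+1$, each partition $\{L|M\}$ with $|L|=k$ contributes $k$ atoms in the one-sided cases and $k^2$ atoms in the two-sided case, and summing over $k=1,\dots,n$ produces the formulas in (iii) and (iv).

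No real obstacle is anticipated: the whole corollary is a direct bookkeeping consequence of Theorem \ref{th:3.3}. The only mildly non-trivial ingredient is the count $1+k^2$ for the fibered product, which follows immediately from the universal property of the pullback over ${\bf 2}$ together with the observation that an element of a primitive SBA is either $0$ or lies in its unique non-zero ${\mathcal D}$-class.
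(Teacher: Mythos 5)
Your proposal is correct and follows essentially the same route as the paper: both read (i) and (ii) directly off the decompositions of Theorem \ref{th:3.3} (your count $1+k^2$ for ${\bf (k+1)}_L\bullet{\bf (k+1)}_R$ being the only computation), and both obtain (iii) and (iv) by counting, factor by factor, the non-zero elements of each primitive summand as its atoms. The paper merely states this bookkeeping more tersely ("replace each factor size $m$ by $m-1$ and products/powers by sums"), so there is no substantive difference.
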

\begin{proof}  Both (i) and (ii) are clear. Replacing $m$ by $m-1$ and products and powers by (repeated) sums in  (i) calculates the equal number of atoms in $_{\mathcal L}{\bf{SBA}}_n$ and $_{\mathcal R}{\bf{SBA}}_n,$ so that  (iii) follows. In similar fashion, (iv) counts the number of atoms in ${\bf{SBA}}_n$.
\end{proof}
Standard combinatorial arguments give the following simplifications:
\begin{corollary}\label{cor:3.5} Given $\alpha_L(n)$, $\alpha_R(n)$ and $\alpha(n)$ as above:
$$
\alpha_L(n)=\alpha_R(n)=n2^{n-1}\,  \text{ and } \, \alpha(n)=n(n+1)2^{n-2}, \text{ so that } \alpha(n)=\frac{n+1}{2}\alpha_L(n).
$$
\end{corollary}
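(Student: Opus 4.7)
The plan is to verify the two identities from Corollary~\ref{cor:3.4} using two standard binomial manipulations, then divide to obtain the stated ratio. The arguments are routine but I will lay out the key substitutions.

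First I would tackle $\alpha_L(n)=\alpha_R(n)=\sum_{k=1}^{n}k\binom{n}{k}$. The basic identity to use is $k\binom{n}{k}=n\binom{n-1}{k-1}$, which reduces the sum to $n\sum_{k=1}^{n}\binom{n-1}{k-1}=n\sum_{j=0}^{n-1}\binom{n-1}{j}=n\cdot 2^{n-1}$ by the binomial theorem. This gives the first claim immediately.

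Next I would handle $\alpha(n)=\sum_{k=1}^{n}k^{2}\binom{n}{k}$ by splitting $k^{2}=k(k-1)+k$ and applying the same shift twice. Concretely, $k(k-1)\binom{n}{k}=n(n-1)\binom{n-2}{k-2}$, so
\[
\sum_{k=1}^{n}k^{2}\binom{n}{k}=n(n-1)\sum_{k=2}^{n}\binom{n-2}{k-2}+n\sum_{k=1}^{n}\binom{n-1}{k-1}=n(n-1)2^{n-2}+n\cdot 2^{n-1}.
\]
Collecting the common factor $n\cdot 2^{n-2}$ in the right-hand side yields $n\cdot 2^{n-2}\bigl[(n-1)+2\bigr]=n(n+1)2^{n-2}$, which is the desired closed form for $\alpha(n)$.

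Finally, dividing the two closed forms gives $\alpha(n)/\alpha_L(n)=n(n+1)2^{n-2}/(n\cdot 2^{n-1})=(n+1)/2$, which is the last assertion. There is no real obstacle here: the content is entirely the two binomial identities $\sum k\binom{n}{k}=n2^{n-1}$ and $\sum k^{2}\binom{n}{k}=n(n+1)2^{n-2}$, and the only care required is indexing the sums so that the shifted binomial coefficients $\binom{n-1}{k-1}$ and $\binom{n-2}{k-2}$ range over all their valid values so that the binomial theorem applies cleanly.
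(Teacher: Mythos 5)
Your proof is correct and is essentially the paper's argument: both reduce the atom counts to the sums $\sum_k k\binom{n}{k}$ and $\sum_k k^2\binom{n}{k}$ and split $k^2=k(k-1)+k$, the only difference being that you evaluate the resulting sums via the absorption identity $k\binom{n}{k}=n\binom{n-1}{k-1}$ (and its second-order analogue) while the paper differentiates $(1+x)^n$ once and twice and sets $x=1$. The final division giving $\alpha(n)=\frac{n+1}{2}\alpha_L(n)$ is the same in both.
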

\begin{proof}
Proof. To see $$\binom{n}{1}1+\binom{n}{2}2+ \dots +\binom{n}{n}n=n2^{n-1},$$ differentiate the binomial expansion $(1+x)^n$ and set $x=1$.
Setting $x =1$ again in the second derivative of the binomial expansion of $(1 + x)^n$ gives
$$
\binom{n}{2}2\cdot 1+ \binom{n}{3}3\cdot 2+ \dots +\binom{n}{n}n(n-1)=n(n-1)2^{n-2}.
$$
Adding the equality of the previous expansion to this and simplifying gives
$$\binom{n}{1}1+\binom{n}{2}4+\dots +\binom{n}{n}n^2=n2^{n-1}+n(n-1)2^{n-2}=n(n+1)2^{n-2}.
$$
\end{proof}
A short table of values follows with the sizes for $n \leq 5$ given to $4$-digit accuracy.

\vspace{0.2cm}
\begin{center}
{\renewcommand{\arraystretch}{1.2}
\begin{tabular}{c|c|c|c|c}\renewcommand{\arraystretch}{2}\addtolength{\tabcolsep}{1pt}
$n$ & $|_{\mathcal L}{\bf{SBA}}_n| $ & $\alpha_L(n) $ & $ |{\bf{SBA}}_n|$ & $\alpha(n)$\\[3pt]
\hline
$2$ & $12$ & $4$ & $20$ & $6$\\[3pt]
\hdashline
$\,\,3\,\,$ & $\,\, 864\,\, $ & $\,\,12\,\,$ & $\,\,10,000\,\,$ & $\,\,24\,\,$\\[3pt]
\hdashline
$4$ & $14,929,920$ & $32$ & $425\cdot 10^8$ & $80$\\[3pt]
\hdashline
$5$ & $3.715\cdot 10^{16}$ & $80$ & $3.017\cdot 10^{25}$ & $240$
\end{tabular}}
\end{center}
\vspace{0.2cm}
\begin{remark} {\em The formulas in Corollary \ref{cor:3.4} (i) and (ii) were found independently by Bignall and Spinks using a general algorithm in computer algebra that gives the sizes of free algebras. (Unpublished communication.)}
\end{remark}

Since
$$\binom{n}{1}+\binom{n}{2}+\dots +\binom{n}{n}=2^n-1,
$$
we have:

\begin{corollary} \label{cor:3.6}
A free (left-handed, right-handed or two-sided) skew Boolean algebra on $n$ generators has $2^n -1$ primitive factors in its atomic decomposition. In general, any
skew Boolean algebra on $n$ generators has $\leq 2^n -1$ primitive factors. Any generalized Boolean algebra on $n$ generators thus has $\leq 2^n -1$ atoms, with the algebra being free if
and only if it has exactly $2^n -1$ atoms, making it isomorphic to ${\bf 2}^{2^n-1}$.
\end{corollary}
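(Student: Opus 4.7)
The plan is to derive all three assertions directly from Theorem~\ref{th:3.3} together with the basic behavior of epimorphic images of orthosums recorded in Lemma~\ref{lem:l:cong} and Proposition~\ref{prop:2.8}. For the first assertion, I would simply count: in the decompositions displayed in Theorem~\ref{th:3.3}, the non-trivial primitive factors are indexed by partitions $\{L|M\}$ of $\{x_1,\dots,x_n\}$ with $L\neq\varnothing$, and such partitions correspond bijectively to non-empty subsets $L\subseteq\{x_1,\dots,x_n\}$. By the identity cited immediately before the corollary, there are $\sum_{k=1}^{n}\binom{n}{k}=2^n-1$ of them, uniformly in the left-handed, right-handed, and two-sided cases.

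For the upper bound on a general $n$-generated skew Boolean algebra ${\bf S}$, I would invoke universality: ${\bf S}$ is an epimorphic image of ${\bf{SBA}}_n$, and by Corollary~\ref{for:lf} it is finite, so Theorem~\ref{th:2.6} expresses it as an orthosum of primitive algebras. Applying Lemma~\ref{lem:l:cong}(4)--(5) (equivalently, Proposition~\ref{prop:2.8}) to the quotient map, each atomic ${\mathcal D}$-class summand of ${\bf{SBA}}_n$ either collapses to $\{0\}$ or is sent onto a distinct atomic ${\mathcal D}$-class of ${\bf S}$. Consequently ${\bf S}$ has at most as many primitive factors as ${\bf{SBA}}_n$, namely $2^n-1$.

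For the GBA statement, the key observation is that a GBA is a lattice: every ${\mathcal D}$-class is a singleton, so every non-zero primitive subalgebra is simply ${\bf 2}$ and primitive factors in the atomic decomposition coincide with atoms. Thus the previous bound yields at most $2^n-1$ atoms. For the \emph{free if and only if} clause, I would combine Theorem~\ref{th:3.3} with the identity ${\bf{GBA}}_n\simeq {\bf{SBA}}_n/{\mathcal D}$ from the beginning of Section~\ref{s3:finite}: quotienting by ${\mathcal D}$ collapses each factor ${\bf k}_L\bullet {\bf k}_R$ to ${\bf 2}$, so ${\bf{GBA}}_n\simeq {\bf 2}^{2^n-1}$, which in particular has $2^n-1$ atoms. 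Conversely, any finite GBA is determined up to isomorphism by its atom count, so an $n$-generated GBA with exactly $2^n-1$ atoms is isomorphic to ${\bf 2}^{2^n-1}\simeq {\bf{GBA}}_n$ and hence is free. There is no deep obstacle here; the only step that deserves care is the appeal to Lemma~\ref{lem:l:cong}(4)--(5), which is what prevents epimorphic images from inflating the number of primitive summands.
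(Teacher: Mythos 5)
Your proposal is correct and takes essentially the same route as the paper: Corollary~\ref{cor:3.6} is stated there as an immediate consequence of the identity $\binom{n}{1}+\dots+\binom{n}{n}=2^n-1$ applied to the decompositions in Theorem~\ref{th:3.3}, with the bound for arbitrary $n$-generated algebras and the GBA clause resting on exactly the epimorphic-image facts you invoke (Theorem~\ref{th:2.6}, Lemma~\ref{lem:l:cong}, Proposition~\ref{prop:2.8}) and on ${\bf{GBA}}_n\simeq{\bf{SBA}}_n/{\mathcal D}$. You have simply made explicit the details the paper leaves implicit.
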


Elements in the free algebra are orthosums of atoms in a unique way (modulo commutativity). This includes $0$ as the empty sum. The essentially unique expression of an element as an orthosum of atoms is called its {\em normal form}. For $x\vee y$ this is $(x\setminus y) + (y\wedge x) + (y\setminus x)$ in $_{\mathcal L}{\bf{SBA}}_{\{x,y\}}$. (Actually, $x\vee y = (x\setminus y) + (y\wedge x) + (y\setminus x)$ holds for all SBAs. If $x\mathrel{\mathcal{D}}y$ it
becomes $x\vee y = y\wedge x$.) But when we extend the generating set this changes. Thus in
$_{\mathcal L}{\bf{SBA}}_{\{x,y,z\}}$ the normal form of $x\vee y$ is $$((x\wedge z)\setminus y) + (x\setminus (y\vee z)) + (y\wedge x\wedge z) + ((y\wedge x)\setminus z) + ((y\wedge z)\setminus  x )+
(y \setminus (x\vee z)).$$ If the generating set is extended further to four elements, $\{x, y, z, w\}$, the normal form for $x\vee y$ would grow to $12$ atomic components.

Alternative ways of expressing atoms of free algebras exist and hence also variant representations of these algebras. In the left-handed case, one could express a non-zero atom as $(x, \{L|M\})$ or even $(x, L)$, where $L$ is any non-empty subset of $X$ and $x \in L$. Here:
$$
(x, \{L|M\}) \wedge  (x', \{L|M\}) = (x, \{L|M\}) = (x', \{L|M\}) \vee (x, \{L|M\}).
$$
In the right-handed case, non-zero atoms would look like $(\{L|M\}, x)$, or even $(L, x)$, where
again $x\in L \subseteq X$. Here:
$$(\{L|M\}, x) \wedge (\{L|M\}, x') = (\{L|M\}, x') = (\{L|M\}, x') \vee (\{L|M\}, x).
$$
In the two-sided case, we have $(x, \{L|M\}, y)$, or even $(x, L, y)$, with $x, y \in L\subseteq  X$. Here:
$$(x, L, y) \wedge (x', L, y') = (x, L, y') = (x', L, y') \vee (x, L, y).$$
Thus in these cases the atoms would be parameterized as pointed or doubly pointed non-empty subsets of $X$.

Returning to the main discussion, Theorems \ref{th:2.6} and \ref{th:3.3} lead~to:
\begin{corollary} Every finite skew Boolean algebra is isomorphic to a subalgebra of a finite free skew Boolean algebra. Every finite left-handed (right-handed) skew Boolean algebra is isomorphic to a direct factor of a finite free left-handed (right-handed) skew Boolean algebra.
\end{corollary}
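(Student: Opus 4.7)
The plan is to combine Theorem~\ref{th:2.6}, which presents every finite SBA as an orthosum of primitive subalgebras, with Theorem~\ref{th:3.3}, which gives the explicit primitive factorisation of each finite free algebra. Both claims then reduce to elementary multiplicity bookkeeping; the crucial ingredient throughout is that for each fixed $k \ge 1$ the binomial $\binom{n}{k}$ grows unboundedly in $n$.

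I treat the direct-factor claim first in the left-handed case (the right-handed case being dual). Let ${\bf T} \simeq {\bf n_1}_L \times \cdots \times {\bf n_s}_L$ with $2 \le n_1 \le \cdots \le n_s$ be the standard atomic decomposition of ${\bf T}$, and for each $j \in \{2, \ldots, n_s\}$ let $m_j$ denote the multiplicity of ${\bf j}_L$ among these factors. By Theorem~\ref{th:3.3}, ${}_{\mathcal L}{\bf{SBA}}_n$ contains ${\bf j}_L$ as a primitive factor with multiplicity $\binom{n}{j-1}$ for every $j \in \{2, \ldots, n+1\}$. Choose $n$ so that $n \ge n_s - 1$ and $\binom{n}{j-1} \ge m_j$ for each $j \in \{2, \ldots, n_s\}$; this is possible since $\binom{n}{j-1}$ grows polynomially in $n$ for fixed $j$. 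Matching each ${\bf j}_L$-summand of ${\bf T}$ with a distinct ${\bf j}_L$-summand of ${}_{\mathcal L}{\bf{SBA}}_n$ and assembling the remaining primitive factors into ${\bf K}$ then realises ${}_{\mathcal L}{\bf{SBA}}_n \simeq {\bf T} \times {\bf K}$.

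For the subalgebra claim in the two-sided case (which suffices, since every finite single-handed SBA is in particular a finite two-sided SBA), apply Theorem~\ref{th:2.6} to write ${\bf S} = \sum_{i=1}^{r} D_i^0$, with each primitive summand isomorphic to some fibered product ${\bf (a_i+1)}_L \bullet {\bf (b_i+1)}_R$ for $a_i, b_i \ge 1$. The key geometric observation is that whenever $a \le k$ and $b \le k$, any $a \times b$ sub-rectangle of the non-zero ${\mathcal D}$-class of ${\bf (k+1)}_L \bullet {\bf (k+1)}_R$, together with $0$, constitutes a subalgebra isomorphic to ${\bf (a+1)}_L \bullet {\bf (b+1)}_R$: closure under $\wedge$ and $\vee$ is the standard fact that sub-rectangles of rectangular bands are sub-bands, and closure under $\setminus$ is automatic because in a primitive SBA $x \setminus y$ equals $x$ or $0$. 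Set $M = \max_i \max(a_i, b_i)$ and pick $n$ large enough that $n \ge M$ and $\sum_{k=M}^{n} \binom{n}{k} \ge r$, which is again feasible by growth of binomial coefficients.

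Theorem~\ref{th:3.3} then supplies $r$ distinct primitive factors $E_1, \ldots, E_r$ of ${\bf{SBA}}_n$ of the form ${\bf (k+1)}_L \bullet {\bf (k+1)}_R$ with $k \ge M$, and each $D_i^0$ embeds into the corresponding $E_i$ via the sub-rectangle construction, giving a homomorphism $\phi_i \colon D_i^0 \hookrightarrow E_i$. Since distinct primitive factors of ${\bf{SBA}}_n$ are pairwise orthogonal inside the product, the images $\phi_i(D_i^0)$ are pairwise orthogonal as well, and the orthosum lemma preceding Theorem~\ref{th:2.6} guarantees that the combined map $\sum_i \phi_i \colon \sum_i D_i^0 \to {\bf{SBA}}_n$ is a well-defined homomorphism of SBAs; it is injective since each $\phi_i$ is. The main subtle point is verifying the sub-rectangle embedding, which is routine given the very explicit structure of primitive algebras; everything else is combinatorial bookkeeping with binomial coefficients.
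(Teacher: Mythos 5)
Your proposal is correct, and for the direct-factor claim it is essentially the paper's own argument: the paper likewise writes ${\bf S}\simeq {\bf 2}^{\mu_2}\times{\bf 3}_L^{\mu_3}\times\cdots\times{\bf m}_L^{\mu_m}$ and observes that ${\bf S}$ is a direct factor of $_{\mathcal L}{\bf{SBA}}_n$ as soon as $\binom{n}{1}\geq\mu_2,\ \binom{n}{2}\geq\mu_3,\ \dots,\ \binom{n}{m-1}\geq\mu_m$, which is exactly your multiplicity-matching condition. Where you go beyond the paper is the first (subalgebra) claim: the paper's proof does not spell this out, while you give an explicit and valid argument --- every primitive summand ${\bf (a+1)}_L\bullet{\bf (b+1)}_R$ of ${\bf S}$ embeds as an $a\times b$ sub-rectangle plus $0$ into a square primitive factor ${\bf (k+1)}_L\bullet{\bf (k+1)}_R$ of ${\bf{SBA}}_n$ (closure under $\setminus$ being trivial in a primitive algebra), and the binomial growth in Theorem \ref{th:3.3} supplies enough such factors, with the orthosum lemma assembling the componentwise embeddings into an injective homomorphism. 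This fills a genuine gap in detail rather than taking a different route; both arguments rest on Theorems \ref{th:2.6} and \ref{th:3.3} plus binomial bookkeeping.
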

\begin{proof} Indeed, if ${\bf S} \simeq  {\bf 2}^{\mu_2} \times {\bf 3}_L^{\mu_3} \times \dots \times {\bf m}_L^{\mu_m}$, then ${\bf S}$ is isomorphic to a direct
factor of $_{\mathcal L}{\bf{SBA}}_n$ for any $n$ such that
$$
\binom{n}{1}\geq \mu_2,\,  \binom{n}{2}\geq \mu_3,\,\dots, \, \binom{n}{m-1}\geq \mu_m.
$$
\end{proof}

\section{Free algebras and minimal generating sets}\label{s4:free_min_gen_set}

To see what free skew Boolean algebras can tell us about skew Boolean algebras in general, we begin with:

\begin{proposition}\label{prop:4.1}
Given a skew Boolean algebra ${\bf S}$ with generating set $\{a_1,\dots, a_n\}$, let $f\colon {\bf{SBA}}_n \to  {\bf S}$ be the homomorphism onto ${\bf S}$ induced by the map $x_i \mapsto a_i$ for $i \leq n$. Then:
\begin{enumerate}[(i)]
\item The images in ${\bf S}$  under $f$ of the $2^{n} -1$ atomic ${\mathcal D}$-classes of ${\bf{SBA}}_n$ form a family of orthogonal ${\mathcal D}$-classes in ${\bf S}$, with some possibly degenerating to $\{0\}$.
\item The remaining ${\mathcal D}$-classes in this family are the atomic classes of ${\bf S}$, each of which is the image of a unique atomic ${\mathcal D}$-class in ${\bf{SBA}}_n$.
\item Finally, if the images of distinct atoms of ${\bf{SBA}}_n$ are distinct non-zero elements in ${\bf S}$, then ${\bf S}$ is free on $\{a_1,\dots, a_n\}$ and the images under $f$ of the atoms of ${\bf{SBA}}_n$ are
the atoms of~${\bf S}$.
\item In particular, $|S|\leq |{\bf{SBA}}_n|$. Equality occurs if and only if ${\bf S}$ is free on $\{a_1,\dots, a_n\}$.
\end{enumerate}
(Similar statements hold in the left and right-handed cases.)
\end{proposition}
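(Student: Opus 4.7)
The plan is to combine the atomic orthosum decomposition ${\bf SBA}_n \simeq \sum_{\{L|M\}} D^0_{\{L|M\}}$ from Theorem~\ref{th:3.3} with the general description of homomorphic images of orthosums given by Lemma~\ref{lem:l:cong}.

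For part (i), I would apply clause (4) of Lemma~\ref{lem:l:cong} to the surjection $f$: each summand $f(D^0_{\{L|M\}})$ is either primitive or the trivial $\{0_{\bf S}\}$, and these images assemble into an orthosum decomposition of $f({\bf SBA}_n) = {\bf S}$. In particular, the non-degenerate images $f(D_{\{L|M\}})$ are pairwise orthogonal $\mathcal{D}$-classes. Part (ii) then follows at once: the same clause says each non-degenerate image is atomic in ${\bf S}$, and since the non-degenerate summands already exhaust ${\bf S}$, every atomic $\mathcal{D}$-class of ${\bf S}$ must be one of them. Uniqueness of the preimage class is Lemma~\ref{lem:l:cong}(5).

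Part (iii) is the substantive step. Assuming $f$ is injective on atoms with no atom sent to $0$, one first deduces that the restriction of $f$ to each primitive summand $D^0_{\{L|M\}}$ is injective (since $f$ separates its atoms and fixes $0$). To upgrade this termwise injectivity to global injectivity of $f$, I would write any two elements $u, v \in {\bf SBA}_n$ in their unique orthosum forms $u = \sum u_{\{L|M\}}$ and $v = \sum v_{\{L|M\}}$ with $u_{\{L|M\}}, v_{\{L|M\}} \in D^0_{\{L|M\}}$, then apply clause (iv) of the orthosum lemma preceding Theorem~\ref{th:2.6} inside ${\bf S}$, using the orthogonality established in (i), to conclude from $f(u) = f(v)$ that $f(u_{\{L|M\}}) = f(v_{\{L|M\}})$ for every partition and hence $u_{\{L|M\}} = v_{\{L|M\}}$. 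Thus $f$ is a bijective homomorphism, so an isomorphism, and ${\bf S}$ is free on $\{a_1, \dots, a_n\}$ with the atoms of ${\bf SBA}_n$ mapping to the atoms of ${\bf S}$.

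Finally, for part (iv), surjectivity of $f$ yields $|S| \leq |{\bf SBA}_n|$ immediately. If equality holds then $f$ is a bijection, hence injective on atoms with no atom mapped to $0$, so (iii) supplies freeness; conversely, if ${\bf S}$ is free on $\{a_1, \dots, a_n\}$, the universal property produces an inverse to $f$, forcing equality of sizes. The main bookkeeping point—rather than a genuine obstacle—is the step in (iii) of propagating atomwise injectivity to global injectivity, which rests entirely on the uniqueness of orthosum representations afforded by the atomic decomposition of ${\bf SBA}_n$ and of its image ${\bf S}$.
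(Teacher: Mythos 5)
Your argument is correct and follows essentially the same route as the paper: the paper deduces all four parts from the atomic decomposition of ${\bf{SBA}}_n$ together with Proposition~\ref{prop:2.8}, which is itself based on Lemma~\ref{lem:l:cong}, the lemma you invoke directly. Your only difference is spelling out the componentwise-injectivity bookkeeping in (iii) via uniqueness of orthosum representations, which the paper leaves implicit.
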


\begin{proof} Clearly, the induced homomorphism goes onto ${\bf S}$. The assertions follow from the
atomic decomposition in ${\bf{SBA}}_n$ and Proposition \ref{prop:2.8}.
\end{proof}

To continue, we introduce the following concept. A generating set of finite (possibly left- or right-handed) skew Boolean algebra ${\bf S}$ is a {\em minimal generating set} of ${\bf S}$ if no other set of generators of ${\bf S}$ has fewer members. Proposition \ref{prop:4.1} gives us:

\begin{theorem}\label{th:4.2} Given the free algebra ${\bf{SBA}}_X$ with $|X|$ finite,  $X$ is a minimal generating set. Conversely, ${\bf{SBA}}_X$ is free on each of its minimal generating sets.  (Left-handed and right-handed variants of both assertions also hold.)
\end{theorem}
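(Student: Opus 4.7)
The plan is to combine the counting formulas of Corollary \ref{cor:3.4} with the universal property encoded in Proposition \ref{prop:4.1}(iv). The only nontrivial observation needed is that the sequence $|{\bf{SBA}}_n|$ (and likewise $|_{\mathcal L}{\bf{SBA}}_n|$, $|_{\mathcal R}{\bf{SBA}}_n|$) is strictly increasing in $n$. This is immediate from the product formulas in Corollary \ref{cor:3.4}: going from $n$ to $n+1$ enlarges every exponent $\binom{n}{k}$ to $\binom{n+1}{k}$ and, in addition, introduces a new primitive factor of size $\geq 2$ coming from $\binom{n+1}{n+1}=1$.

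For the first assertion, set $n = |X|$ and write $X = \{x_1,\dots, x_n\}$. Suppose, toward a contradiction, that ${\bf{SBA}}_X$ admits a generating set $Y$ with $|Y| = m < n$. By Proposition \ref{prop:4.1}(iv) (applied to ${\bf S}= {\bf{SBA}}_X$ with the generators $Y$), we have $|{\bf{SBA}}_X| \leq |{\bf{SBA}}_m|$. But ${\bf{SBA}}_X \simeq {\bf{SBA}}_n$, so $|{\bf{SBA}}_X| = |{\bf{SBA}}_n| > |{\bf{SBA}}_m|$ by strict monotonicity — a contradiction. Hence $X$ is a minimal generating set.

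For the converse, let $Y = \{a_1,\dots, a_k\}$ be a minimal generating set of ${\bf{SBA}}_X$. By the first part, $X$ itself is a minimal generating set of cardinality $n$, so minimality forces $k = n$. Applying Proposition \ref{prop:4.1}(iv) again, with ${\bf S} = {\bf{SBA}}_X$ and the epimorphism $f\colon {\bf{SBA}}_n \to {\bf{SBA}}_X$ induced by $x_i \mapsto a_i$, we obtain $|{\bf{SBA}}_X| \leq |{\bf{SBA}}_n|$, with equality holding if and only if ${\bf{SBA}}_X$ is free on $Y$. Since $|{\bf{SBA}}_X| = |{\bf{SBA}}_n|$, equality holds, and hence ${\bf{SBA}}_X$ is free on $Y$.

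The left-handed and right-handed variants are proved verbatim, using $_{\mathcal L}{\bf{SBA}}_n$ (respectively $_{\mathcal R}{\bf{SBA}}_n$) together with the corresponding statement of Proposition \ref{prop:4.1} and the cardinality formula in Corollary \ref{cor:3.4}(i). The only potential obstacle is the strict monotonicity of these cardinal sequences, which, as noted above, is evident by inspection of the product formulas; nothing deeper is required.
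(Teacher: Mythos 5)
Your proposal is correct and follows essentially the same route as the paper: bound $|{\bf{SBA}}_X|$ by the size of the free algebra on a smaller generating set via Proposition \ref{prop:4.1}(iv), rule this out by the cardinality formulas coming from Theorem \ref{th:3.3}/Corollary \ref{cor:3.4}, and get the converse from the equality case of Proposition \ref{prop:4.1}. Your explicit remark on strict monotonicity of $|{\bf{SBA}}_n|$ merely spells out what the paper leaves implicit.
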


\begin{proof} Let $Y$ be another generating set of smaller size than $X$. Then $ |{\bf{SBA}}_X| \leq|{\bf{SBA}}_Y| $ since ${\bf{SBA}}_X$ is now a homomorphic image of ${\bf{SBA}}_Y$. But Theorem \ref{th:3.3} implies $ |{\bf{SBA}}_X| \leq |{\bf{SBA}}_Y| $ cannot be if $|Y| < |X|$. The converse follows from Proposition \ref{prop:4.1} (iii) and (iv). 
\end{proof}

Given a skew Boolean algebra ${\bf S}$, its {\em rank}, denoted by $\rho({\bf S})$, is the least integer $n$ such that ${\bf S}$ can be generated by $n$ generators. Equivalently, $\rho({\bf S})$ is the least $n$ such that ${\bf S}$ is a homomorphic image of ${\bf{SBA}}_n$. In the left- or right-handed cases, $\rho({\bf S})$ is also the least $n$ such that 
${\bf S}$ is a homomorphic image of $_{\mathcal L}{\bf{SBA}}_n$ or $_{\mathcal R}{\bf{SBA}}_n$, respectively. Clearly $\rho({\bf{1}})=0$, $\rho({\bf{2}})=1$ and in general $\rho({\bf{n}}_L)=\rho({\bf{n}}_R)=n-1$ for all $n\geq 1$.
Focusing on the left-handed case, let ${\bf S}$ factor isomorphically as ${\bf 2}^{\mu_2} \times {\bf 3}_L^{\mu_3} \times \dots \times {\bf m}_L^{\mu_m}$ with $\mu_m\neq 0$.
Determining $\rho({\bf S})$ is equivalent to determining the rank of its free cover, $_{\mathcal L}{\bf{SBA}}_n$.
By the covering criteria at the end of Section \ref{s2:primitive}, the following are both necessary and sufficient conditions for $_{\mathcal L}{\bf{SBA}}_n$ to at least have ${\bf S}$ as a homomorphic image:

\begin{enumerate}
\item $_{\mathcal L}{\bf{SBA}}_n$ must have at least $\mu_m$ factors of order $\geq m$.
\item Besides $\mu_m$ of these, $_{\mathcal L}{\bf{SBA}}_n$ must have $\mu_{m-1}$ more factors of order $\geq m-1$.
\item Beyond the $\mu_m + \mu_{m-1}$ of the above factors, $_{\mathcal L}{\bf{SBA}}_n$  must have at least $\mu_{m-2}$ more factors of order $\geq m-2$, etc.
\end{enumerate}

\begin{example} {\em Consider ${\bf S}={\bf 2}^2\times {\bf 3}_L^4\times {\bf 4}_L^3 \times {\bf 5}_L^{48} \times {\bf 6}_L^{11} \times {\bf 7}_L^8$ and assume that $_{\mathcal L}{\bf{SBA}}_n$ can cover ${\bf S}$. Because of the ${\bf 7}_L$-factor, $n$ must be at least $6$. Consider $_{\mathcal L}{\bf{SBA}}_6$ and $_{\mathcal L}{\bf{SBA}}_7$:
$$
_{\mathcal L}{\bf{SBA}}_6\simeq {\bf 2}^6\times {\bf 3}_L^{15}\times {\bf 4}_L^{20}\times {\bf 5}_L^{15}\times {\bf 6}_L^6\times {\bf 7}_L^1 \text{ does not have enough } {\bf 6}_L\text{-} \text{ or } {\bf 7}_L\text{-factors.}
$$
$$
_{\mathcal L}{\bf{SBA}}_7\simeq {\bf 2}^7\times {\bf 3}_L^{21}\times {\bf 4}_L^{35}\times {\bf 5}_L^{35}\times {\bf 6}_L^{21}\times {\bf 7}_L^7\times {\bf 8}_L^1 \text{ has enough } {\bf 6}_L\text{-}, {\bf 7}_L\text{-} \text{ and } {\bf 8}_L\text{-factors.}
$$

In particular, ${\bf 7}_L^8$ is a homomorphic image of ${\bf 7}_L^7\times {\bf 8}_L^1$ and $10$ ${\bf 6}_L$-factors are left over. These $10$ factors and the $35$ ${\bf 5}_L$-factors, however, are not big enough to account for all $48$ needed ${\bf 5}_L$-factors. Only in passing to $_{\mathcal L}{\bf{SBA}}_8$ we do acquire enough factors to account for each factor in ${\bf S}$, whether by isomorphism or by epimorphism. Thus $\rho({\bf S})=8$.}
\end{example}

In general, set 
$$
\Gamma_m^n=\binom{n}{m}+\binom{n}{m+1}+\dots +\binom{n}{n}
$$
for $n\geq m$. $\Gamma_m^n$ counts the number of primitive factors in
$$
_{\mathcal L}{\bf{SBA}}_n \simeq {\bf 1}^{\binom{n}{0}}\times {\bf 2}^{\binom{n}{1}} \times {\bf 3}_L^{\binom{n}{2}}\times {\bf 4}_L^{\binom{n}{3}} \times\dots \times {\bf{(n+1)}}_L^{\binom{n}{n}}.
$$

that are ${\bf{(m+1)}}_L$ or bigger with the indices $m,m+1,\dots, n$ counting the number of atoms in ${\bf{(m+1)}}_L$, ${\bf{(m+2)}}_L,$ $\dots,$ ${\bf{(n+1)}}_L$.
We have:
\begin{proposition}\label{prop:4.4} ${\bf 2}^{\mu_2} \times {\bf 3}_L^{\mu_3} \times \dots \times {\bf({m+1})}_L^{\mu_{m+1}}$ is a homomorphic image of $_{\mathcal L}{\bf{SBA}}_n$ if and only if
\begin{enumerate}[(i)]
\item $n \geq  m$.
\item $\Gamma_m^n \geq \mu_{m+1}$, $\Gamma_{m-1}^ n \geq \mu_{m+1} + \mu_m$, and in general 
$$
\Gamma_k^n\geq \mu_{m+1}+\mu_m+\dots +\mu_{k+1}
$$
for all indices $k$ where $1 \leq k \leq m$.
\end{enumerate}
Such an $_{\mathcal L}{\bf{SBA}}_n$ exists, with the least such $n$ for which this occurs being the rank $\rho({\bf S})$.

(A similar assertion holds in the rihgt-handed case.)
\end{proposition}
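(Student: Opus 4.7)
The plan is to invoke the left-handed covering criterion stated at the end of Section~\ref{s2:primitive}, applied with source $_{\mathcal L}{\bf{SBA}}_n$ whose standard atomic decomposition is given by Theorem~\ref{th:3.3}, and target ${\bf T}={\bf 2}^{\mu_2}\times{\bf 3}_L^{\mu_3}\times\cdots\times{\bf(m+1)}_L^{\mu_{m+1}}$. That criterion reduces the existence of an epimorphism $_{\mathcal L}{\bf{SBA}}_n\tto{\bf T}$ to the existence of an injection of the multiset of primitive factors of ${\bf T}$ into the multiset of primitive factors of $_{\mathcal L}{\bf{SBA}}_n$ such that every ${\bf T}$-factor is matched with an $_{\mathcal L}{\bf{SBA}}_n$-factor of equal or greater size.

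Necessity of (i) is immediate: the largest summand of ${\bf T}$ is ${\bf(m+1)}_L$, whereas the largest summand of $_{\mathcal L}{\bf{SBA}}_n$ is ${\bf(n+1)}_L$, so any such matching forces $n\geq m$. Necessity of the inequalities in (ii) follows by counting: for each $k$ with $1\leq k\leq m$, each ${\bf T}$-factor of size at least $k+1$ must occupy a distinct $_{\mathcal L}{\bf{SBA}}_n$-factor of size at least $k+1$. The ${\bf T}$-side contributes $\mu_{k+1}+\cdots+\mu_{m+1}$ such factors, while by the definition of $\Gamma_k^n$ the $_{\mathcal L}{\bf{SBA}}_n$-side supplies exactly $\binom{n}{k}+\cdots+\binom{n}{n}=\Gamma_k^n$ of them.

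For sufficiency, assume (i) and (ii). Since the possible factor sizes form a linear order, a greedy matching works: process the factors of ${\bf T}$ in order of non-increasing size, and assign each to an unused $_{\mathcal L}{\bf{SBA}}_n$-factor of minimal adequate size. The cumulative inequality at level $k+1$ in (ii) guarantees that at each stage enough $_{\mathcal L}{\bf{SBA}}_n$-factors of size at least $k+1$ remain to absorb the unmatched ${\bf T}$-factors of that size or larger. Hence a matching exists, and Proposition~\ref{prop:2.8} converts it into the required epimorphism.

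For the final assertion, note that $\Gamma_k^n\geq\binom{n}{k}\to\infty$ as $n\to\infty$ for each fixed $k$, so for $n$ sufficiently large all inequalities in (ii) hold; hence some such $_{\mathcal L}{\bf{SBA}}_n$ exists. The least such $n$ coincides with $\rho({\bf S})$ by the definition of the rank and Theorem~\ref{th:4.2}. The only mildly technical point is justifying that the greedy procedure succeeds, but this is a standard consequence of the linear order on factor sizes, which reduces Hall's marriage condition to precisely the chain of cumulative inequalities stated in (ii).
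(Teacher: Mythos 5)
Your proof is correct and follows essentially the same route as the paper: both reduce the statement to the covering criterion for left-handed orthosums stated at the end of Section~\ref{s2:primitive} (via Proposition~\ref{prop:2.8} and Theorem~\ref{th:3.3}), translate the size-matching condition into the cumulative $\Gamma_k^n$ inequalities, and settle existence by observing the inequalities hold for all sufficiently large $n$ (the paper notes $n\geq |{\bf S}|-1$ suffices, while you use that $\binom{n}{k}\to\infty$ — an immaterial difference). Your explicit greedy/Hall justification of the matching is a harmless elaboration of the paper's conditions (1)--(3).
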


\begin{proof}
This follows from (1)--(3) above plus the fact that, because  ${\bf S}$ is finite, some $_{\mathcal L}{\bf{SBA}}_n$ such that (i) and (ii) hold does exist (it is enough to take $n\geq |{\bf S}|-1$).\end{proof}

For $n=\rho({\bf S})$, $_{\mathcal L}{\bf{SBA}}_n$ is the free cover of ${\bf S}$. Distinct homomorphisms
from $_{\mathcal L}{\bf{SBA}}_n$ onto ${\bf S}$ correspond to distinct ordered generating sets of ${\bf S}$ of size $n$, with each ordered set of generators $a_1, a_2, \dots, a_n$ determining the unique homomorphism that sends $x_i$ to $a_i$ for $i \leq n$. Thus only the free cover $_{\mathcal L}{\bf{SBA}}_n$ need be unique.

Since the partial function algebra ${\bf{\mathcal{P}}}_L(\{1, \dots, n\}, \{1, \dots, m\})$ is isomorphic to ${\bf{(m+1)}}_L^n$, the size of minimal generating sets for ${\bf{(m+1)}}_L^n$ is of interest. Proposition \ref{prop:4.4} leads us to the sequence: $\Gamma_m^m$, $\Gamma_m^{m+1}$ , $\dots$. Here we consider the tails of higher factors of the $_{\mathcal L}{\bf{SBA}}_{m+k}$-sequence, the tails that begin with the ${\bf{(m+1)}}_L$-factor.
$$
{\renewcommand{\arraystretch}{1.7}
\begin{array}{lll}
_{\mathcal L}{\bf{SBA}}_m: &  {\bf{(m+1)}}_L & \Gamma_m^m=1\\
_{\mathcal L}{\bf{SBA}}_{m+1}: & {\bf{(m+1)}}_L^{m+1}\times {\bf{(m+2)}}_L & \Gamma_m^{m+1}=(m+1)+1=m+2\\
_{\mathcal L}{\bf{SBA}}_{m+2}: & {\bf{(m+1)}}_L^{\binom{m+2}{m}}\times {\bf{(m+2)}}_L^{m+2} \times {\bf{(m+3)}}_L& \Gamma_m^{m+2}=\binom{m+2}{m} + (m+2)+1\\
\quad \cdots & \qquad \qquad \cdots & \qquad \quad \cdots \\
_{\mathcal L}{\bf{SBA}}_{m+k}: & {\bf{(m+1)}}_L^{\binom{m+k}{m}}\times \dots \times  {\bf{(m+k+1)}}_L & \Gamma_m^{m+k}=\binom{m+k}{m} + \binom{m+k}{m+1}+\dots + 1
\end{array}
}
$$

As a consequence, ${\bf{(m+1)}}_L^n$ will have a minimal set of $m + k$ generators for all $n$ within the range: $1+\Gamma_m^{m+k-1}\leq n\leq \Gamma_m^{m+k}$.
For ${\bf 3}_L = {\bf{(2+1)}}_L$ and ${\bf 5}_L = {\bf{(4+1)}}_L$ we have:

\vspace{0.2cm}
{\renewcommand{\arraystretch}{1.1}
\begin{center}
\begin{tabular}{ccc}
\begin{tabular}{c|c}
$\rho({\bf 3}_L^n)$ & {Range for $n$ in ${\bf 3}_L^n$} \\
\hline
$2$ & $n=1$\\
\hdashline
$3$ & $2\leq n\leq 4$\\
\hdashline
$4$ & $5\leq n\leq 11$\\
\hdashline
$5$ & $12\leq n\leq 26$\\
\hdashline
$6$ & $27\leq n\leq 57$\\
\end{tabular}
&  \hspace{0.3cm} &
\begin{tabular}{c|c}
$\rho({\bf 5}_L^n)$ & {Range for $n$ in ${\bf 5}_L^n$} \\
\hline
$4$ & $n=1$\\
\hdashline
$5$ & $2\leq n\leq 6$\\
\hdashline
$6$ & $7\leq n\leq 22$\\
\hdashline
$7$ & $23\leq n\leq 64$\\
\hdashline
$8$ & $65\leq n\leq 163$\\
\end{tabular}
\end{tabular}
\end{center}
}

That, for example, $\rho({\bf 3}_L^{57})=6$ is due to the fact that ${\bf 3}_L$ is a homomorphic image of each factor in the tail ${\bf 3}_L^{15} \times {\bf 4}_L^{20} \times {\bf 5}_L^{15} \times {\bf 6}_L^{6} \times {\bf 7}_L^{1}$ of the decomposition of $_{\mathcal L}{\bf{SBA}}_6$. 

The right-handed case is the precise mirror reflection of the above considerations.
Likewise every finite skew Boolean algebra has a free cover, ${\bf{SBA}}_n$, where $n$ is the size of all minimal
generating sets for the algebra considered. If that algebra happens to be, say, left-handed, then its free left-handed cover is the maximal left-handed image of its free two-sided cover.

In the two-sided case, first observe that every primitive factor of ${\bf{SBA}}_n$ has the form ${\bf m}_L\bullet {\bf m}_R$. Thus, taking the given algebra in factored form, first replace each factor ${\bf m}_L\bullet {\bf n}_R$ in it by ${\bf{max}}_L\bullet {\bf{max}}_R$ where ${\mathrm{max}} = \max(m, n)$. For this modified algebra, determine the minimal ${\bf{SBA}}_n$ much as above. This ${\bf{SBA}}_n$ is also minimal for the given algebra.

Returning to ${\bf{(n+1)}}_L^m$, to produce a minimal set of generators for ${\bf{(n + 1)}}_L^m$ is a straightforward, but increasingly tedious process as the parameters $m$ and $n$ increase. We illustrate this process with ${\bf 3}_L^{4}$ using the tail ${\bf 3}_L^{3} \times {\bf 4}_L^{1}$ in the standard decomposition of $_{\mathcal L}{\bf{SBA}}_3$ to first
construct a homomorphism $\varphi$  from the free cover $_{\mathcal L}{\bf{SBA}}_3$ onto ${\bf 3}_L^{4}$. To do so, we describe the atoms of $_{\mathcal L}{\bf{SBA}}_3$ in terms of variable generators, $x$, $y$ and $z$. To begin, $\varphi$ sends all atoms in the atomic classes of $_{\mathcal L}{\bf{SBA}}_3$  prior to those corresponding to ${\bf 3}_L^{3} \times {\bf 4}_L^{1}$ to $0$. On each of these four final factors $\varphi$ is determined by $\varphi(0) = 0$ and:
{\renewcommand{\arraystretch}{1.4}
$$
\begin{array}{ll}
\text{on } \{(x\wedge y)\setminus z, (y\wedge x)\setminus z\} 
& \varphi\colon  (x\wedge y)\setminus z \mapsto 1, (y\wedge x)\setminus z \mapsto 2;\\
\text{on }  \{(x\wedge z)\setminus y, (z\wedge x)\setminus y\} 
& \varphi\colon  (x\wedge z)\setminus y \mapsto 1, (z\wedge x)\setminus y \mapsto 2;\\
\text{on }  \{(y\wedge z)\setminus x, (z\wedge y)\setminus x\} 
& \varphi\colon  (y\wedge z)\setminus x \mapsto 1, (z\wedge y)\setminus x \mapsto 2;\\
\text{on } \{x\wedge y\wedge z, y\wedge z\wedge x, z\wedge x\wedge y\} 
& \varphi\colon  x\wedge y\wedge z \mapsto 1, y\wedge z\wedge x, z\wedge x\wedge y \mapsto 2.
\end{array}
$$
}
Based on the leading variable in each term, $\varphi$ determines the three generators as the images under $\varphi$ of $x$, $y$ and $z$:
$$
x \mapsto (1, 1, 0, 1), \,\, y \mapsto (2, 0, 1, 2), \,\, z \mapsto (0, 2, 2, 2).
$$
Here we have determined the outcomes by seeing, say $x$ for example, as the orthosum of
all atomic terms that have $x$ leading the expression on the left:
$$
x = (x\wedge y)\setminus z + (x\wedge z)\setminus y + x\wedge y\wedge z + \text{other terms sent to } 0.
$$
To confirm that these three $4$-tuples indeed generate ${\bf 3}_L^4$, observe that:
{\renewcommand{\arraystretch}{1.4}
$$
\begin{array}{ll}
x\wedge y\wedge z \mapsto (0, 0, 0, 1), & y\wedge z\wedge x \mapsto (0, 0, 0, 2).\\
(x\wedge y)\setminus z \mapsto (1, 0, 0, 0), &  (y\wedge x)\setminus z \mapsto (2, 0, 0, 0). \\
(x\wedge z)\setminus y \mapsto (0, 1, 0, 0), &  (z\wedge x)\setminus y \mapsto (0, 2, 0, 0). \\
(y\wedge z)\setminus x \mapsto (0, 0, 1, 0), & (z\wedge y)\setminus x \mapsto (0, 0, 2, 0).
\end{array}
$$
}
From these $4$-tuples, all of ${\bf 3}_L^4$ is easily obtained. Returning to partial function algebras, it follows that ${\bf{\mathcal{P}}}_L(\{1, 2,3,4\}, \{1, 2\})$ is minimally generated from the partial maps:
$$
\varphi_1=\left\lbrace\begin{array}{l}1\mapsto 1 \\ 2\mapsto 1\\ 4\mapsto 1\end{array}\right.;  \,\,\, \varphi_2=\left\lbrace\begin{array}{l}1\mapsto 2 \\ 3\mapsto 1\\ 4\mapsto 2\end{array}\right.; \,\,\,
\varphi_3=\left\lbrace\begin{array}{l}2\mapsto 2 \\ 3\mapsto 2\\ 4\mapsto 2\end{array}\right..
$$

\section{Atom Splitting and the Infinite Free Case} \label{s5:atom_splitting}

Consider the inclusion $_{\mathcal L}{\bf{SBA}}_n \subseteq$  $_{\mathcal L}{\bf{SBA}}_{n+1}$ induced by $$\{x_1, \dots, x_n\} \subseteq \{x_1, \dots, x_n, x_{n+1}\}.$$ The atoms of $_{\mathcal L}{\bf{SBA}}_n$ are no longer atomic in  $_{\mathcal L}{\bf{SBA}}_{n+1}$. The left-handed identity $$x = (x\wedge y) + (x\setminus y)$$ gives the following �subatomic� decomposition of the original atoms:
\begin{multline*}
(x_1\wedge x_2\wedge \dots \wedge x_k) \setminus (x_{k+1}\vee \dots \vee x_n)\\
= (x_1\wedge x_2\wedge \dots \wedge x_k \wedge x_{n+1})\setminus (x_{k+1}\vee\dots\vee x_n) + (x_1\wedge x_2\wedge \dots \wedge x_k)\setminus (x_{k+1}\vee \dots \vee x_{n+1}).
\end{multline*}
Both components of the new decomposition are of course atoms in $_{\mathcal L}{\bf{SBA}}_{n+1}$. If say
$(x_1\wedge x_2\wedge x_3)\setminus (x_4\vee \dots \vee x_n) = (x_1\wedge x_3\wedge x_2)\setminus (x_4\vee \dots\vee x_n)$ in $_{\mathcal L}{\bf{SBA}}_n$, then their corresponding pairs
of atomic components in $_{\mathcal L}{\bf{SBA}}_{n+1}$ remain equal. But if say $(x_1\wedge x_2)\setminus (x_3\vee \dots \vee x_n) \neq
(x_2\wedge x_1)\setminus (x_3\vee x_4\vee \dots \vee x_n)$ in $_{\mathcal L}{\bf{SBA}}_n$, then both corresponding pairs of components are
likewise unequal in $_{\mathcal L}{\bf{SBA}}_{n+1}$. One thus has extended the decomposition where, while a given element remains the same, its atomic decomposition doubles in length as each new
generator is added, as in the example of $x\vee y$ following Corollary \ref{cor:3.6}. Thus given $u$ in $_{\mathcal L}{\bf{SBA}}_n$ with atomic decomposition $u = a_1 +\dots + a_r$ in $_{\mathcal L}{\bf{SBA}}_n$, each atom $a_k$ splits as $b_k +
c_k$ in $_{\mathcal L}{\bf{SBA}}_{n+1}$, where $b_i = a_i\wedge x_{n+1}$ and $c_i = a_i\setminus x_{n+1}$, to give a revised atomic decomposition $u = b_1 + c_1 + \dots + b_r + c_r$ in $_{\mathcal L}{\bf{SBA}}_{n+1}$. Given the uniqueness of atomic decompositions (to within commutativity) of elements in $_{\mathcal L}{\bf{SBA}}_n$ or in $_{\mathcal L}{\bf{SBA}}_{n+1}$, we have:

\begin{lemma}\label{lem:5.1}
Given $u$ in $_{\mathcal L}{\bf{SBA}}_n$, let $a$ be an atom of $_{\mathcal L}{\bf{SBA}}_n$ and let $a=b+c$ be the atomic decomposition of $a$ in $_{\mathcal L}{\bf{SBA}}_{n+1}$ where $b = a\wedge x_{n+1}$ and $c = a\setminus x_{n+1}$. Then the
following are equivalent:
\begin{enumerate}[(i)]
\item $u \geq a$ in $_{\mathcal L}{\bf{SBA}}_n$ (and thus in $_{\mathcal L}{\bf{SBA}}_{n+1}$).
\item $u \geq b$ in $_{\mathcal L}{\bf{SBA}}_{n+1}$.
\item $u\geq c$ in $_{\mathcal L}{\bf{SBA}}_{n+1}$.
\end{enumerate}
\end{lemma}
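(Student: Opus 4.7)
The implication (i) $\Rightarrow$ (ii), (iii) is immediate. Since $a = b + c$ is an orthogonal sum of atoms in $_{\mathcal L}{\bf{SBA}}_{n+1}$, Lemma~2.5 (applied to this two-summand orthosum) gives $b\wedge a = b = a\wedge b$, so $b \leq a$; symmetrically $c \leq a$. Transitivity of the natural partial order then yields both $u \geq b$ and $u \geq c$.

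For (ii) $\Rightarrow$ (i) (and symmetrically (iii) $\Rightarrow$ (i)), the plan is to argue via uniqueness of atomic decompositions. First I would record the standard fact that for an atom $\alpha$ of a finite SBA and an element $u$ with atomic decomposition $u = a_1 + \dots + a_r$, one has $\alpha \leq u$ if and only if $\alpha = a_i$ for some $i$. This follows because atoms in distinct atomic $\mathcal D$-classes are orthogonal and, within a single left-handed $\mathcal D$-class (a left-zero semigroup), $\alpha\wedge a_i = \alpha$ and $a_i\wedge\alpha = a_i$ whenever $\alpha\mathrel{\mathcal L}a_i$, so $\alpha\leq u$ forces $\alpha = a_i$. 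Now let $u = a_1 + \dots + a_r$ be the atomic decomposition in $_{\mathcal L}{\bf{SBA}}_n$; the discussion preceding the lemma shows that $u = (b_1 + c_1) + \dots + (b_r + c_r)$ with $b_i = a_i\wedge x_{n+1}$, $c_i = a_i\setminus x_{n+1}$ is its atomic decomposition in $_{\mathcal L}{\bf{SBA}}_{n+1}$. Hypothesis (ii) therefore forces $b \in \{b_1, c_1, \dots, b_r, c_r\}$.

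Next I would observe that in the canonical normal form provided by Theorem~\ref{th:3.2}, the atom $b = a\wedge x_{n+1}$ has $x_{n+1}$ in its left (meet) part, whereas each $c_j = a_j\setminus x_{n+1}$ has $x_{n+1}$ in its right (difference) part. By Theorem~\ref{th:3.2}(ii) these atoms belong to distinct atomic $\mathcal D$-classes of $_{\mathcal L}{\bf{SBA}}_{n+1}$, hence are unequal. Therefore $b = b_i$ for a unique $i$. The core technical step is to deduce $a = a_i$ from $a\wedge x_{n+1} = a_i\wedge x_{n+1}$. Writing $a = (y_1\wedge\dots\wedge y_k)\setminus(y_{k+1}\vee\dots\vee y_n)$ and $a_i = (z_1\wedge\dots\wedge z_l)\setminus(z_{l+1}\vee\dots\vee z_n)$, equality of the two atoms in $_{\mathcal L}{\bf{SBA}}_{n+1}$ forces, by Theorem~\ref{th:3.2}(ii), $\{y_1,\dots,y_k\}\cup\{x_{n+1}\} = \{z_1,\dots,z_l\}\cup\{x_{n+1}\}$, whence $k = l$ and the corresponding right sets also agree; by $(iii)_{\mathcal L}$ applied in $_{\mathcal L}{\bf{SBA}}_{n+1}$, the left-most variables match, giving $y_1 = z_1$. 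Applying Theorem~\ref{th:3.2}$(iii)_{\mathcal L}$ back in $_{\mathcal L}{\bf{SBA}}_n$ yields $a = a_i$, so $a$ appears in the atomic decomposition of $u$ and $u \geq a$. The proof of (iii) $\Rightarrow$ (i) is the mirror argument, with the roles of meet and difference reversed when showing $c$ cannot coincide with any $b_j$.

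The main obstacle is the injectivity claim of the last paragraph — that distinct atoms of $_{\mathcal L}{\bf{SBA}}_n$ produce distinct ``halves'' in $_{\mathcal L}{\bf{SBA}}_{n+1}$ — but this is entirely controlled by the explicit normal-form characterization in Theorem~\ref{th:3.2}. The rest of the proof is a routine invocation of uniqueness of atomic decomposition.
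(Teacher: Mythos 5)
Your argument is correct and follows the same route the paper intends: the lemma is presented there as an immediate consequence of the atom-splitting discussion plus uniqueness of atomic decompositions, and your proof simply fills in those details (that an atom below $u$ must be one of its orthosummands, that $b$ cannot equal any $c_j$ since $x_{n+1}$ sits on opposite sides of the partition, and that $b=b_i$ forces $a=a_i$ via the normal forms of Theorem \ref{th:3.2}). The only cosmetic point is that the step ``equal atoms have equal left-most variable'' is the converse of $(iii)_{\mathcal L}$, justified by the distinctness of same-class atoms with different leading variables established in the proof of Theorem \ref{th:3.2}, rather than by $(iii)_{\mathcal L}$ itself.
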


Thus $a$ is in the atomic decomposition of $u$ in $_{\mathcal L}{\bf{SBA}}_n$ iff $b$ (or $c$ and hence both) is in the atomic decomposition of $u$ in $_{\mathcal L}{\bf{SBA}}_{n+1}$.


This leads us to infinite free algebras with necessarily infinite generating sets. If
$X$ is infinite, then $_{\mathcal L}{\bf{SBA}}_X$ is the upward directed union of its finite free subalgebras:
$$
_{\mathcal L}{\bf{SBA}}_X =\bigcup \{_{\mathcal L}{\bf{SBA}}_Y\colon \varnothing \neq Y\subseteq X \text{ and } |Y|<\infty\}.
$$

Given $u$ and $v$ of $_{\mathcal L}{\bf{SBA}}_X$, each occurs in some finite free subalgebra, say $u$ in $_{\mathcal L}{\bf{SBA}}_Y$ and
$v$ in $_{\mathcal L}{\bf{SBA}}_Z$ for finite subsets $Y$ and $Z$ of $X$. Thus $u\wedge v$, $u\vee v$ and $u\setminus v$ are calculated in the
larger finite subalgebra $_{\mathcal L}{\bf{SBA}}_{Y\cup Z}$ or in any finite $_{\mathcal L}{\bf{SBA}}_W$ where $Y\cup Z \subseteq W$. Of course,
calculations of $u\wedge v$, $u\vee v$ and $u\setminus v$ do not change in passing from $_{\mathcal L}{\bf{SBA}}_{Y\cup Z}$ to any properly
larger $_{\mathcal L}{\bf{SBA}}_W$. What changes is their atomic decompositions; such changes, however,
are derived from the original decompositions in $_{\mathcal L}{\bf{SBA}}_{Y\cup Z}$ by (possibly repeated) atomic
splitting. Ultimately, we obtain:

\begin{proposition} In $_{\mathcal L}{\bf{SBA}}_X$ for $X$ infinite, no atoms exist. 
\end{proposition}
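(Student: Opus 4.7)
The plan is to argue by contradiction, leveraging the atom splitting phenomenon that was just set up. Suppose $u \in {}_{\mathcal L}{\bf{SBA}}_X$ is an atom. Since $_{\mathcal L}{\bf{SBA}}_X$ is the directed union of its finite free subalgebras $_{\mathcal L}{\bf{SBA}}_Y$ with $Y$ a finite subset of $X$, the element $u$ lies in some such $_{\mathcal L}{\bf{SBA}}_Y$.

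The first step is to observe that $u$ is still atomic as an element of the smaller algebra $_{\mathcal L}{\bf{SBA}}_Y$: any nonzero $v \leq u$ computed in $_{\mathcal L}{\bf{SBA}}_Y$ is also a nonzero lower bound of $u$ in $_{\mathcal L}{\bf{SBA}}_X$, so minimality there forces $v = u$. Hence by Theorem \ref{th:3.2}, $u$ has the explicit atomic form coming from some partition of $Y$.

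Now since $X$ is infinite, pick any $x \in X \setminus Y$ and pass to the finite subalgebra $_{\mathcal L}{\bf{SBA}}_{Y \cup \{x\}} \subseteq {}_{\mathcal L}{\bf{SBA}}_X$. By the atom-splitting identity described right before Lemma \ref{lem:5.1}, the atom $u$ of $_{\mathcal L}{\bf{SBA}}_Y$ decomposes as $u = b + c$ in $_{\mathcal L}{\bf{SBA}}_{Y \cup \{x\}}$, where $b = u\wedge x$ and $c = u \setminus x$ are both atoms of $_{\mathcal L}{\bf{SBA}}_{Y \cup \{x\}}$, hence both nonzero. Each of $b$ and $c$ satisfies $b \leq u$ and $c \leq u$ in $_{\mathcal L}{\bf{SBA}}_{Y \cup \{x\}}$ (a consequence of the identity $(x \wedge y \wedge x) \vee (x \setminus y) = x$ together with left-handedness), and neither can equal $u$, since $b = u$ would force $c = 0$ and vice versa. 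So $u$ strictly dominates the nonzero element $b$ in $_{\mathcal L}{\bf{SBA}}_{Y \cup \{x\}}$, hence also in $_{\mathcal L}{\bf{SBA}}_X$, contradicting the assumption that $u$ was an atom there.

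The argument is essentially routine once the atom-splitting machinery of Lemma \ref{lem:5.1} is in place; the only point requiring minor care is the verification that $b < u$ strictly, which reduces to noting that the two components of an orthogonal sum equal to a nonzero element cannot both be the full element. I expect no serious obstacle: the whole difficulty was already absorbed into proving the uniqueness of atomic decompositions in the finite case, and the directed-union description of $_{\mathcal L}{\bf{SBA}}_X$ lets that finite analysis propagate to the infinite setting.
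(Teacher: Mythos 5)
Your proof is correct and follows the paper's own argument: locate the putative atom in a finite free subalgebra $_{\mathcal L}{\bf{SBA}}_Y$, adjoin a fresh generator, and use the splitting $u=(u\wedge x)+(u\setminus x)$ into two nonzero atoms of the larger finite subalgebra to exhibit a nonzero element strictly below $u$. You merely spell out the details (atomicity of $u$ in $_{\mathcal L}{\bf{SBA}}_Y$ and strictness of $b<u$) that the paper leaves implicit in its one-line proof.
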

\begin{proof}
If $a$ is an atom, then it
appears as such in $_{\mathcal L}{\bf{SBA}}_Y$ for some finite $Y$; but it immediately looses its atomic status in a properly larger free subalgebra.
\end{proof}

This is a fundamental difference between finite and infinite free algebras. It leads, in turn, to the second difference.
Recall that the {\em center} of a skew lattice, consisting of elements that both $\wedge$- commute and $\vee$-commute with all elements, is the union of all singleton ${\mathcal{D}}$-classes. (\cite{L3}
Theorem 1.7.) In $_{\mathcal L}{\bf{SBA}}_n$ (or $_{\mathcal R}{\bf{SBA}}_n$ or ${\bf{SBA}}_n$) it is the set of all $n$ atoms of the form $x_1\setminus (x_2\vee\dots\vee x_n)$ and the subalgebra they generate consisting of all orthosums of such atoms. But, except for $0$, none of these orthosums remain central in $_{\mathcal L}{\bf{SBA}}_{n+1}$. For each atom, we can write
\begin{align*}
x_{n+1}\wedge (x_1\setminus (x_2\vee \dots \vee x_n)) & = (x_{n+1}\wedge x_1)\setminus (x_2\vee\dots\vee x_n)\\
& \neq (x_1\wedge x_{n+1})\setminus (x_2\vee\dots\vee x_n) \\
& = (x_1\setminus (x_2\vee\dots\vee x_n))\wedge  x_{n+1},
\end{align*}
with the two new, unequal atoms, being ${\mathcal{D}}$-related in $_{\mathcal L}{\bf{SBA}}_{n+1}$. Thus, given any non-zero central element $c = a_1 + \dots + a_k$ in $_{\mathcal L}{\bf{SBA}}_n$ with atoms $a_i$ of the given form,
$$x_{n+1}\wedge c = (x_{n+1}\wedge a_1) + \dots  + (x_{n+1}\wedge a_k) \neq (a_1\wedge x_{n+1}) + \dots + (a_k\wedge x_{n+1}) = c\wedge x_{n+1}.$$ The case for $_{\mathcal R}{\bf{SBA}}_n$ and $_{\mathcal R}{\bf{SBA}}_{n+1}$, or ${\bf{SBA}}_n$ and ${\bf{SBA}}_{n+1}$, is similar. We thus have:

\begin{theorem} \label{th:5.2} Given a finite free skew Boolean algebra on $n$ generators, whether left-handed, right-handed or two-sided, its center forms a Boolean algebra of order $2^n$. In the case of an infinite free algebra, the center is just $\{0\}$.
\end{theorem}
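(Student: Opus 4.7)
The plan is to split into the finite and infinite cases, leveraging the explicit decomposition of Theorem~\ref{th:3.3} and the atom-splitting analysis already carried out just before the statement.

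For the finite case, I would first record the general fact that the center of a direct product of skew Boolean algebras is the direct product of the centers of the factors, since both $\wedge$- and $\vee$-commutation are detected coordinatewise. By Theorem~\ref{th:3.3}, each of $_{\mathcal L}{\bf{SBA}}_n$, $_{\mathcal R}{\bf{SBA}}_n$ and ${\bf{SBA}}_n$ factors as a product of copies of ${\bf 1}$, ${\bf 2}$, and primitive algebras of the form ${\bf k}_L$, ${\bf k}_R$ or ${\bf k}_L\bullet {\bf k}_R$ with $k\geq 3$. The factor ${\bf 1}$ contributes $\{0\}$, while ${\bf 2}$ is a (commutative) Boolean algebra and thus equals its own center. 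For $k\geq 3$, the unique nonzero ${\mathcal D}$-class of a primitive factor is a rectangular band of size at least $2$, so picking any two distinct elements $x,y$ in that class one has $x\wedge y = y \neq x = y\wedge x$; hence the center of such a primitive factor is exactly $\{0\}$. Collecting contributions, the center is the ${\bf 2}^{\binom{n}{1}} = {\bf 2}^n$ subproduct, a Boolean algebra of order $2^n$, generated precisely by the $n$ central atoms $x_i\setminus\bigvee_{j\neq i} x_j$ identified in the discussion preceding the theorem.

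For the infinite case, let $X$ be infinite and suppose $c$ is a nonzero central element of $_{\mathcal L}{\bf{SBA}}_X$. Since $c$ is represented by a term in finitely many variables, $c \in{} _{\mathcal L}{\bf{SBA}}_Y$ for some finite $Y=\{x_1,\dots,x_n\}\subseteq X$. The element $c$ is still central in $_{\mathcal L}{\bf{SBA}}_Y$ (centrality only weakens when passing to a subalgebra), so by the finite case it is a nonempty orthosum $c = a_{i_1}+\dots+a_{i_k}$ of central atoms $a_i = x_i\setminus\bigvee_{j\neq i} x_j$ of $_{\mathcal L}{\bf{SBA}}_Y$. Now choose any $x_{n+1}\in X\setminus Y$ and apply the computation displayed immediately before the theorem: for each such atom $a_i$, $x_{n+1}\wedge a_i$ and $a_i\wedge x_{n+1}$ are distinct (and ${\mathcal D}$-related) atoms of $_{\mathcal L}{\bf{SBA}}_{Y\cup\{x_{n+1}\}}$, so by the uniqueness of atomic decompositions the orthosums $x_{n+1}\wedge c$ and $c\wedge x_{n+1}$ differ. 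This contradicts the centrality of $c$, so the center of $_{\mathcal L}{\bf{SBA}}_X$ is $\{0\}$. The right-handed and two-sided cases are handled by the dual and combined computations already noted, or by passing through the appropriate quotients.

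The only step that requires any real care is the finite-case claim that primitive factors of rank $\geq 3$ contribute trivially to the center; the infinite case then reduces to a clean contradiction using the pre-theorem computation, and the remaining work is bookkeeping with the Theorem~\ref{th:3.3} decomposition.
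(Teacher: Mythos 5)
Your proposal is correct and follows essentially the same route as the paper: the finite case identifies the center with the ${\bf 2}^{\binom{n}{1}}={\bf 2}^n$ part of the Theorem~\ref{th:3.3} decomposition (the paper does this by invoking the characterization of the center as the union of singleton ${\mathcal D}$-classes), and the infinite case is exactly the paper's argument via the displayed computation $x_{n+1}\wedge c\neq c\wedge x_{n+1}$ for a nonzero central $c$ lying in a finite free subalgebra. The only nitpick is your equality $x\wedge y=y$, $y\wedge x=x$ for distinct ${\mathcal D}$-related elements, which is the right-handed convention and fails verbatim in the left-handed and two-sided primitive factors; but anticommutativity of ${\mathcal D}$-classes gives $x\wedge y\neq y\wedge x$ in all cases, which is all you need.
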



Recall that a skew lattice  $(S; \wedge, \vee)$ has {\em intersections} (or is {\em with intersections}) if every pair $e, f \in S$ possesses a {\em natural} meet with respect to the natural partial order $\geq$ on S. If it exists, the natural meet of $e$ and $f$ is denoted by by $e\cap f$ and called the {\em intersection} of $e$ and $f$. For any pair $e$ and $f$, $e\wedge f$ coincides with $e\cap f$ if and only if $e\wedge f = f\wedge e$. By \cite[Theorem 2.8]{BL}, skew Boolean algebras with intersections form a congruence distributive variety. Returning to Example \ref{ex:1},  given partial functions $f$ and $g$ in ${\mathcal{P}}(A, B)$, $f \cap g$ is precisely their intersection when viewed as subsets of $A\times B$. 


If we can show that $_{\mathcal L}{\bf{SBA}}_X$ has intersections for all sets $X$, then so does the term equivalent $_{\mathcal R}{\bf{SBA}}_X$, and we will see also ${\bf{SBA}}_X$. This is all clear for $X$ finite since finite
SBAs have intersections. Indeed, given a finite product ${\bf P}_1 \times \dots \times {\bf P}_r$ of primitive SBAs (themselves finite or not) the intersection exists and is given by:
$$
(p_1,\dots, p_r)\cap (q_1,\dots, q_r)=(p_1\cap q_1,\dots, p_r\cap q_r) \text{ where each } p_k\cap q_k=\left\lbrace\begin{array}{ll}p_k, & \text{if }p_k=q_k;\\
0,& \text{otherwise.}\end{array}\right.
$$
 Internally viewed, in any finite SBA, $x\cap y$ is the orthosum of all atoms common to the
atomic decompositions of both $x$ and $y$, which must be $0$ when no such atoms exist. Thanks to our observations on adjoining free generators and the effects on atoms,
including Lemma~\ref{lem:5.1}, intersections are stable under the inclusion $_{\mathcal L}{\bf{SBA}}_n \subset$ $_{\mathcal L}{\bf{SBA}}_{n+1}$. Thus the intersection for elements in $_{\mathcal L}{\bf{SBA}}_3$ remains the same for these elements in the bigger, say $_{\mathcal L}{\bf{SBA}}_7$. What changes is the decomposition of all outcomes into atoms. The pool of atoms that two elements share in $_{\mathcal L}{\bf{SBA}}_n$, doubles by splitting to give rise to the new pool of atoms in $_{\mathcal L}{\bf{SBA}}_{n+1}$ that both share. As a result we obtain:

\begin{theorem}\label{th:5.3} Given any set X, the free left-handed (right-handed) skew Boolean algebra $_{\mathcal L}{\bf{SBA}}_X$ ($_{\mathcal R}{\bf{SBA}}_X$) on $X$ has intersections. Given elements $x$ and $y$ in $_{\mathcal L}{\bf{SBA}}_X$, $x\cap y$
can be calculated in any subalgebra $_{\mathcal L}{\bf{SBA}}_Y$, where $Y$ is any finite subset of $X$ such that $_{\mathcal L}{\bf{SBA}}_Y$ contains both $x$ and $y$. Similar remarks hold for $_{\mathcal R}{\bf{SBA}}_X$.
\end{theorem}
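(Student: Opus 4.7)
The plan is to reduce the existence of intersections in the infinite case to the finite case, where they are already known to exist. Given $x, y \in {}_{\mathcal L}{\bf{SBA}}_X$, each element, being a term on $X$, uses only finitely many generators, so there is a finite $Y \subseteq X$ with $x, y \in {}_{\mathcal L}{\bf{SBA}}_Y$. By the remarks preceding the theorem, ${}_{\mathcal L}{\bf{SBA}}_Y$ is a finite SBA and therefore has intersections, so $x \cap y$ is well-defined inside ${}_{\mathcal L}{\bf{SBA}}_Y$. I then need to show that this element does not depend on the choice of $Y$ and that it remains the natural meet of $x$ and $y$ in the ambient algebra ${}_{\mathcal L}{\bf{SBA}}_X$.

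For the first step, stability under enlarging the alphabet by one generator, I would invoke Lemma~\ref{lem:5.1} directly. In ${}_{\mathcal L}{\bf{SBA}}_Y$, the intersection $x\cap y$ is (by the internal description given just before the theorem) the orthosum of all atoms lying below both $x$ and $y$. When passing to ${}_{\mathcal L}{\bf{SBA}}_{Y \cup \{x_{n+1}\}}$, each such atom $a$ splits as $a = (a\wedge x_{n+1}) + (a\setminus x_{n+1})$, and by Lemma~\ref{lem:5.1} an element $u$ dominates $a$ in the old algebra iff it dominates both summands in the new one. Therefore the pool of atoms below both $x$ and $y$ in ${}_{\mathcal L}{\bf{SBA}}_{Y\cup\{x_{n+1}\}}$ consists of exactly the splits of the atoms in the old pool, and their orthosum is the same element of the algebra. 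Iterating this across a finite chain shows that the value of $x\cap y$ is invariant under passage from ${}_{\mathcal L}{\bf{SBA}}_Y$ to any larger finite ${}_{\mathcal L}{\bf{SBA}}_{Y'}$ with $Y \subseteq Y' \subseteq X$. Given two candidate finite subsets $Y_1, Y_2$ containing $x$ and $y$, comparing both with $Y_1 \cup Y_2$ shows the computed value is independent of the choice.

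Finally I would verify that this common value $z := x \cap y$ really is the natural meet in ${}_{\mathcal L}{\bf{SBA}}_X$. The relation $\le$ is defined by the identities $z \wedge x = z = x \wedge z$, hence it is preserved under subalgebra inclusion in both directions; so $z \le x$ and $z \le y$ hold in ${}_{\mathcal L}{\bf{SBA}}_X$ because they hold in ${}_{\mathcal L}{\bf{SBA}}_Y$. For maximality, take any $w \in {}_{\mathcal L}{\bf{SBA}}_X$ with $w \le x$ and $w \le y$. Then $w$ lies in some finite ${}_{\mathcal L}{\bf{SBA}}_{Y'}$, which by enlarging if necessary we may assume contains $Y$. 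Inside ${}_{\mathcal L}{\bf{SBA}}_{Y'}$, the intersection of $x$ and $y$ exists and, by the stability established above, equals $z$. Since intersections in a finite SBA are natural meets, $w \le z$ in ${}_{\mathcal L}{\bf{SBA}}_{Y'}$, hence in ${}_{\mathcal L}{\bf{SBA}}_X$. Thus $z$ is the natural meet $x \cap y$ in ${}_{\mathcal L}{\bf{SBA}}_X$. The right-handed statement follows by duality.

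The only technical obstacle is the stability argument in paragraph two, which is precisely what Lemma~\ref{lem:5.1} was set up to handle; everything else is a straightforward directed-union argument exploiting the fact that the natural partial order, the operations, and the subalgebra structure are all term-definable and therefore commute with the inclusions ${}_{\mathcal L}{\bf{SBA}}_Y \hookrightarrow {}_{\mathcal L}{\bf{SBA}}_X$.
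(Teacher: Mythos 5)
Your proposal is correct and follows essentially the same route as the paper: compute $x\cap y$ in a finite subalgebra ${}_{\mathcal L}{\bf{SBA}}_Y$, use the atom-splitting observations and Lemma~\ref{lem:5.1} for stability under enlarging the generating set, and then absorb any common lower bound $u$ into a larger finite subalgebra ${}_{\mathcal L}{\bf{SBA}}_{Y\cup U}$ to conclude $u\leq x\cap y$. Your write-up merely makes explicit the stability and well-definedness steps that the paper compresses into its remarks preceding the theorem.
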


\begin{proof} Suppose that $x$ and $y$ are encountered in $_{\mathcal L}{\bf{SBA}}_Y$ where $Y$ is a finite subset of $X$ and that $u$ in $_{\mathcal L}{\bf{SBA}}_X$ is such that $u$  is less than or equal to both $x$ and $y$. Then $x\cap y$ relative to $_{\mathcal L}{\bf{SBA}}_Y$ exists. By our remarks, this $x\cap y$ remains the intersection in any $_{\mathcal L}{\bf{SBA}}_Z$ where $Y \subseteq Z$ if $Z$ is finite. Now $u$ must be encountered in some finite subalgebra $_{\mathcal L}{\bf{SBA}}_U$ where $U \subseteq X$. Then both the
current $x\cap y$ and $u$ must lie in the larger subalgebra $_{\mathcal L}{\bf{SBA}}_{Y\cup U}$. Since $Y\cup U$ is finite, $x\cap y$ is the intersection here also, and $u \leq x\cap y$ follows. Thus $x\cap y$ remains the intersection of $x$ and $y$ throughout all of $_{\mathcal L}{\bf{SBA}}_X$. The case for $_{\mathcal R}{\bf{SBA}}_Y$ is similar.
\end{proof}

Since $_{\mathcal R}{\bf{SBA}}_X\simeq  {\bf{SBA}}_X/{\mathcal L}$, $_{\mathcal L}{\bf{SBA}}_X\simeq  {\bf{SBA}}_X/{\mathcal R}$ and by \cite[Theorem 2.3]{LS} a skew Boolean algebra ${\bf S}$ has intersections if and only if both ${\bf S}/{\mathcal L}$ and ${\bf S}/{\mathcal R}$ have them, we thus
have:

\begin{corollary}\label{cor:5.4}
All free skew Boolean algebras have intersections.
\end{corollary}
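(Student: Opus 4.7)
The plan is to deduce the corollary by reducing each of the four free algebra flavours to cases already handled. The left-handed and right-handed free algebras $_{\mathcal L}{\bf{SBA}}_X$ and $_{\mathcal R}{\bf{SBA}}_X$ are disposed of directly by Theorem~\ref{th:5.3}, so nothing new is required there. The free generalized Boolean algebra ${\bf{GBA}}_X$ is even easier: its skew lattice reduct is a genuine (commutative) lattice, hence $x\wedge y = y\wedge x$ for every pair, so the meet operation and the natural meet coincide and intersections exist automatically.

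The only case demanding real argument is the two-sided ${\bf{SBA}}_X$. Here I would invoke the isomorphisms recorded in Section~\ref{s3:finite}, namely ${\bf{SBA}}_X/{\mathcal L} \simeq {}_{\mathcal R}{\bf{SBA}}_X$ and ${\bf{SBA}}_X/{\mathcal R} \simeq {}_{\mathcal L}{\bf{SBA}}_X$. By Theorem~\ref{th:5.3}, both quotients have intersections. Then applying the characterization from \cite[Theorem 2.3]{LS} cited just above the corollary, namely that a skew Boolean algebra has intersections precisely when both its ${\mathcal L}$- and ${\mathcal R}$-quotients do, immediately delivers intersections for ${\bf{SBA}}_X$.

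There is no genuine obstacle beyond what has already been overcome in Theorem~\ref{th:5.3}: the work of proving stability of the intersection operation under the tower of finite free subalgebras was done there, and the present corollary is a direct structural consequence. The only care needed is to be explicit that the four varieties ${\bf{GBA}}$, ${}_{\mathcal L}{\bf{SBA}}$, ${}_{\mathcal R}{\bf{SBA}}$, and ${\bf{SBA}}$ all fall under the phrase \emph{free skew Boolean algebras} in the corollary's statement, and to treat each accordingly. The whole proof can be compressed into two or three sentences citing Theorem~\ref{th:5.3} and \cite[Theorem 2.3]{LS}.
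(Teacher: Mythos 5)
Your proposal is correct and follows essentially the same route as the paper: the left- and right-handed cases come directly from Theorem~\ref{th:5.3}, and the two-sided case uses the isomorphisms $_{\mathcal R}{\bf{SBA}}_X\simeq {\bf{SBA}}_X/{\mathcal L}$, $_{\mathcal L}{\bf{SBA}}_X\simeq {\bf{SBA}}_X/{\mathcal R}$ together with \cite[Theorem 2.3]{LS}. The extra remark about ${\bf{GBA}}_X$ is harmless but not needed, since the corollary concerns the skew Boolean varieties handled above.
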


Returning to surjective homomorphisms, any such map $f$ from a finite SBA ${\bf S}$ preserves intersections if and only if it is determined by its kernel ideal, $\{x \in S\colon f(x) = 0\}$ (See \cite[Proposition 3.8, Theorem 3.9]{BL}. See also the descriptions of $\cap$-morphisms between SBAs with intersections given in \cite{BCV} and \cite{Kud}.)
For example, in the case
of ${\bf 2}^{\mu_2} \times {\bf 3}_L^{\mu_3} \times \dots \times {\bf{m}}_L^{\mu_{m}}$, the kernel ideal is a sub-product of primitive factors so that the
map, to within isomorphism, is just a projection onto the product of the complementary
factors. Thus a $\cap$-preserving homomorphism from ${\bf 2}^{\mu_2} \times {\bf 3}_L^{\mu_3} \times \dots \times {\bf{m}}_L^{\mu_{m}}$ is essentially a
projection onto a direct factor, ${\bf 2}^{\nu_2} \times {\bf 3}_L^{\nu_3} \times \dots \times {\bf{m}}_L^{\nu_{m}}$, with all $\nu_j \leq \mu_j$ and possibly some $\nu_i = 0$.

In general, an algebra ${\bf A}$ is free on a subset $X$ of the underlying set A of ${\bf A}$, if for any other algebra ${\bf B}$, every map $f\colon X \to B$ extends uniquely to a homomorphism $f\colon {\bf A}\to {\bf B}$. When this is the case, $X$ must generate ${\bf A}$ and we say that $X$ {\em freely generates} ${\bf A}$. It is easily seen that the identity map on $X$ induces an inverse pair of isomorphisms between ${\bf F}_X$ and  ${\bf A}$, where ${\bf F}_X$ denotes the relevant term algebra, which in our case is ${\bf{SBA}}_X$,
$_{\mathcal R}{\bf{SBA}}_X$, $_{\mathcal L}{\bf{SBA}}_X$ or ${\bf{GBA}}_X$.

\begin{theorem}\label{th:5.5} Let ${\bf S}$ be a left-handed (right-handed) skew Boolean algebra, let $X\subseteq S$ be a generating set of ${\bf S}$ and let $\pi\colon{\bf S} \to {\bf S}/{\mathcal D}$ be the canonical homomorphism. Then the following statements are equivalent:
\begin{enumerate}[(i)]
\item ${\bf S}$ is freely generated by $X$.
\item For every finite subset $Y$ of $X$, the subalgebra $\langle Y\rangle$ generated by $Y$ is free on $Y$.
\item For every subset $\{x_1, \dots, x_n\}$ of $n$ distinct elements in $X$, their evaluations in the $n2^{n-1}$ atomic terms on $n$ variables produce $n2^{n-1}$ distinct non-zero outcomes in ${\bf S}$.
\item ${\bf S}/{\mathcal D}$ is freely generated by $\pi(X)$ and for any $x\neq y$ in $X$ and any $a\in S$ if $a\leq x,y$, then $a = 0$.
\item ${\bf S}/{\mathcal D}$ is freely generated by $\pi(X)$ and for any $x \neq y \in X$, $x\cap y$ exists and equals $0$.
\end{enumerate}
\end{theorem}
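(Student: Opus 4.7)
The plan is to close the cycle $(i) \Rightarrow (ii) \Rightarrow (iii) \Rightarrow (i)$, then handle $(iv) \Leftrightarrow (v)$ and $(i) \Rightarrow (iv)$ by direct inspection, and finally establish the substantive implication $(iv) \Rightarrow (iii)$. I focus on the left-handed case; the right-handed case is dual.

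For $(i) \Leftrightarrow (ii) \Leftrightarrow (iii)$, I would combine local finiteness (Corollary~\ref{for:lf}), Theorems~\ref{th:3.2} and~\ref{th:3.3}, and Proposition~\ref{prop:4.1}. The implication $(i) \Rightarrow (ii)$ uses that the canonical map ${}_{\mathcal{L}}{\bf{SBA}}_Y \to S$ induced by $Y \subseteq X$ is an embedding onto $\langle Y\rangle$: Theorem~\ref{th:3.3} realises ${}_{\mathcal{L}}{\bf{SBA}}_Y$ as the direct factor of ${}_{\mathcal{L}}{\bf{SBA}}_X$ indexed by partitions whose non-empty block lies in $Y$. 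Then $(ii) \Rightarrow (iii)$ is Theorem~\ref{th:3.2}(i) applied to $\langle Y\rangle \simeq {}_{\mathcal{L}}{\bf{SBA}}_n$, and $(iii) \Rightarrow (ii)$ is Proposition~\ref{prop:4.1}(iii). Finally $(ii) \Rightarrow (i)$ is a colimit argument: local finiteness writes $S$ as the directed union of its finitely generated subalgebras $\langle Y\rangle$; any $f\colon X \to T$ restricts to $Y$ and extends uniquely to $\bar f_Y\colon \langle Y\rangle \to T$, and uniqueness makes these extensions compatible under inclusions $Y \subseteq Y'$, so they glue to a unique homomorphism $\bar f \colon S \to T$.

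The equivalence $(iv) \Leftrightarrow (v)$ is immediate since $0$ is always a common lower bound of $x$ and $y$, so (iv) asserts precisely that $0$ is the greatest common lower bound, i.e., $x \cap y$ exists and equals $0$. For $(i) \Rightarrow (iv)$, the quotient $S/\mathcal{D}$ is the universal generalized-Boolean image of the free $S$, hence is free on $\pi(X)$. For the lower-bound clause, take $S = {}_{\mathcal{L}}{\bf{SBA}}_X$, distinct $x, y \in X$, and $a \leq x, y$; choose a finite $Y \subseteq X$ containing $x, y$ with $a \in {}_{\mathcal{L}}{\bf{SBA}}_Y$, and expand $a$ as an orthosum of atoms of ${}_{\mathcal{L}}{\bf{SBA}}_Y$. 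The proof of Theorem~\ref{th:3.2} exhibits each generator $x_i$ as the orthosum of precisely those atoms having $x_i$ as leading variable, so every element $\leq x_i$ consists only of such atoms; since leading variables are unique (Theorem~\ref{th:3.2}(iii)$_{\mathcal{L}}$) and $x \neq y$, no atom lies below both generators, forcing the decomposition to be empty and $a = 0$.

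The main obstacle is $(iv) \Rightarrow (iii)$. Fix $Y = \{x_1, \dots, x_n\} \subseteq X$ and let $\phi \colon {}_{\mathcal{L}}{\bf{SBA}}_n \to \langle Y\rangle$ be the epimorphism induced by $z_i \mapsto x_i$. Post-composing with $\pi\colon S \to S/\mathcal{D}$ factors through ${}_{\mathcal{L}}{\bf{SBA}}_n/\mathcal{D} \simeq {\bf{GBA}}_n$, and the hypothesis that $S/\mathcal{D}$ is free on $\pi(X)$ makes the induced map ${\bf{GBA}}_n \to \langle \pi(Y)\rangle$ an isomorphism. Combined with Proposition~\ref{prop:4.1}(i)--(ii), this forces the $2^n - 1$ atomic $\mathcal{D}$-classes of ${}_{\mathcal{L}}{\bf{SBA}}_n$ to map to $2^n - 1$ distinct non-zero atomic $\mathcal{D}$-classes of $\langle Y\rangle$; no atom is killed, and atoms from different $\mathcal{D}$-classes stay $\mathcal{D}$-inequivalent. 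It remains to compare atoms within a single $\mathcal{D}$-class: if $a_1 \neq a_2$ share an atomic $\mathcal{D}$-class, Theorem~\ref{th:3.2}(iii)$_{\mathcal{L}}$ gives distinct leading variables $z_{i_1} \neq z_{i_2}$, and since $\phi$ preserves the natural partial order, $\phi(a_1) \leq x_{i_1}$ and $\phi(a_2) \leq x_{i_2}$. An equality $\phi(a_1) = \phi(a_2)$ would produce a non-zero element below two distinct generators $x_{i_1}, x_{i_2} \in X$, contradicting~(iv). Thus all $n2^{n-1}$ atomic evaluations remain distinct and non-zero in $\langle Y\rangle$, establishing (iii) and closing the cycle.
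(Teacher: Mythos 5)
Your proposal is correct in its overall strategy and, despite the reorganization into a cycle, it is essentially the paper's proof: (ii) and (iii) are linked through Proposition \ref{prop:4.1} and Theorem \ref{th:3.2}; (iv) and (v) are immediate from the meaning of $\cap$; (i) yields the remaining conditions because a freely generated algebra is a copy of the term algebra; and the substantive implication (iv) $\Rightarrow$ (iii) is handled exactly as in the paper, namely freeness of ${\bf S}/{\mathcal D}$ on $\pi(X)$ forces $\langle Y\rangle$ to have the full complement of $2^n-1$ atomic ${\mathcal D}$-classes (so no atom is killed and distinct classes stay distinct), while the hypothesis that distinct generators have no nonzero common lower bound separates the atoms within a single class, since two such atoms with different leading variables lie below the two corresponding distinct generators. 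Your (ii) $\Rightarrow$ (i) via gluing the unique extensions over the directed family of subalgebras $\langle Y\rangle$ is a harmless variant of the paper's argument that non-injectivity of the canonical surjection ${}_{\mathcal L}{\bf{SBA}}_X \to {\bf S}$ would already be visible in some finite free subalgebra.

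One justification should be repaired: in your (i) $\Rightarrow$ (ii) step, the claim that Theorem \ref{th:3.3} realises ${}_{\mathcal L}{\bf{SBA}}_Y$ as \emph{the direct factor} of ${}_{\mathcal L}{\bf{SBA}}_X$ indexed by partitions whose nonempty block lies in $Y$ is false as a description of how $\langle Y\rangle$ sits inside the larger free algebra. For $X=\{x,y\}$ and $Y=\{x\}$ that factor is $\{0,\,x\setminus y\}$, whereas $\langle x\rangle=\{0,x\}$ and $x\neq x\setminus y$ in ${}_{\mathcal L}{\bf{SBA}}_X$; in general the atoms of $\langle Y\rangle$ split across several atomic classes of the bigger algebra (precisely the atom-splitting phenomenon of Section \ref{s5:atom_splitting}), so $\langle Y\rangle$ is only abstractly isomorphic to that factor, not equal to it. The fact you actually need is simpler and is what the paper uses implicitly: in the term algebra, $\langle Y\rangle$ consists of the classes of terms over the subalphabet $Y$, and two such terms are identified in ${}_{\mathcal L}{\bf{SBA}}_X$ if and only if they are identified in ${}_{\mathcal L}{\bf{SBA}}_Y$, whence $\langle Y\rangle\simeq{}_{\mathcal L}{\bf{SBA}}_Y$ directly. (Also, like the paper, your (iv) $\Rightarrow$ (iii) tacitly reads the hypothesis as saying that $\pi$ is injective on $X$, so that $\langle\pi(Y)\rangle$ is free of rank $n$; this is the intended reading of the statement.)
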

\begin{proof}
First observe that (ii) and (iii) are equivalent by Proposition \ref{prop:4.1}, while (iv) and  (v) are equivalent by the meaning of $x\cap y$. By remarks preceding the theorem, since  (ii) holds for $_{\mathcal L}{\bf{SBA}}_X$,  (ii) and  (iii) follow from (i). Item (i) implies  (iv) and (v) is since distinct free generators share no common atoms in finite free algebras (check $x\cap y$ in $_{\mathcal L}{\bf{SBA}}_{\{x,y\}}$) and thus have $0$-intersection in any free algebra. For the converse implications, first assume  (ii) and let $f\colon _{\mathcal L}{\bf{SBA}}_X \to {\bf S}$ be the homomorphism induced by the map $x\mapsto x$ $f$ is clearly
surjective. Is $f$ one-to-one? If not then on some finite free subalgebra $_{\mathcal L}{\bf{SBA}}_Y$ for $Y$ a finite subset of $X$, $f$ will not be one-to-one. But (ii) prevents this so that $f$ is an
isomorphism and (i) follows. We now have  (i) - (iii) equivalent. Finally assume  (iv) and (v). We show (iii) by first observing that the assumption on ${\bf S}/{\mathcal D}$ guarantees that $\langle Y\rangle$ has
the correct number of atomic classes for any finite $Y \subseteq X$. The assumption that $x\cap y = 0$ for all $x \neq y$ in $X$ ensures that for any such $Y$, the atomic classes in $\langle Y\rangle$ have the size given in the free case. Indeed  $x\cap y = 0$ for some $x \neq y$ in $Y$ implies that all pairs of atoms of the form $(x\wedge y\wedge u)\setminus v$ and $(y\wedge x\wedge u)\setminus v$, while ${\mathcal L}$-related, must be distinct. Further, since this is so
for all $x \neq y$ in $Y$, all atomic classes of $\langle Y\rangle$ obtain the maximal size allowed in $_{\mathcal L}{\bf{SBA}}_Y$, making $\langle Y\rangle \simeq$  $_{\mathcal L}{\bf{SBA}}_Y$. Items (i) - (iii)  now follow.
\end{proof}

\begin{corollary} For $n\geq 2$, the free (right-handed, left-handed) skew Boolean algebra on $n$ generators is not free as a (right-handed, left-handed) skew Boolean algebra with intersections.
\end{corollary}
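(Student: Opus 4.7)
The plan is to assume the contrary and derive a contradiction from the universal property applied to the two-element Boolean algebra ${\bf 2}$ under the constant-$1$ assignment on generators. I will describe the left-handed case in detail; the right-handed and two-sided cases run in parallel.

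First I would establish the key structural fact that in $_{\mathcal L}{\bf{SBA}}_n$ the intersection of any two distinct generators is $0$. By Corollary \ref{cor:5.4} the algebra has intersections, so the expression $x_i\cap x_j$ makes sense. By Theorem \ref{th:4.2}, $X = \{x_1,\dots,x_n\}$ freely generates $_{\mathcal L}{\bf{SBA}}_n$ in the variety of left-handed SBAs, and so Theorem \ref{th:5.5}(v) yields $x_i\cap x_j = 0$ for all $i\neq j$. For ${\bf{SBA}}_n$ the same conclusion follows by projecting through the Kimura factorization onto $_{\mathcal L}{\bf{SBA}}_n$ and $_{\mathcal R}{\bf{SBA}}_n$ and lifting back.

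Next, regard ${\bf 2}$ as a left-handed SBA with intersections and suppose $_{\mathcal L}{\bf{SBA}}_n$ were free on $X$ in this richer variety. The universal property would then extend the constant-$1$ assignment $f(x_i) = 1$ to a homomorphism $\bar f\colon\,_{\mathcal L}{\bf{SBA}}_n\to {\bf 2}$ preserving $\wedge$, $\vee$, $\setminus$, $0$ and crucially $\cap$. Applying $\bar f$ to $x_1\cap x_2$ produces $\bar f(x_1\cap x_2) = \bar f(0) = 0$ on one hand, and $\bar f(x_1)\cap\bar f(x_2) = 1\cap 1 = 1$ on the other, in flat contradiction with $\cap$-preservation. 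Since $n\geq 2$, this rules out the assumed freeness. The right-handed case is verbatim, and the two-sided case uses the same ${\bf 2}$-valued map.

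The only genuine subtlety I anticipate concerns the scope of the statement. Read literally, the corollary asserts non-freeness with respect to the canonical generating set $X$, which the argument above settles. If one wishes to assert the stronger claim that $_{\mathcal L}{\bf{SBA}}_n$ is not free as an SBA with intersections on \emph{any} generating set $Y\subseteq\,_{\mathcal L}{\bf{SBA}}_n$, an additional step is needed: one must exhibit two elements of $Y$ whose intersection in $_{\mathcal L}{\bf{SBA}}_n$ is zero, so that the same ${\bf 2}$-valued construction applies. I would expect this to follow from the rank bound of Theorem \ref{th:4.2} combined with the orthogonal atomic decomposition of Theorem \ref{th:3.3}, since any generating set must cover all $2^n-1$ atomic ${\mathcal D}$-classes and hence cannot have every pair sitting above a common nonzero element. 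This bookkeeping is the one step that would need a brief supplementary remark rather than being immediate.
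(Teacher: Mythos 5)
Your argument is correct and is essentially the paper's own proof: the paper notes that distinct free generators satisfy $x\cap y=0$ while this is not an identity for skew Boolean algebras with intersections, and your evaluation of the constant-$1$ assignment into ${\bf 2}$ (where $1\cap 1=1\neq 0$) is precisely the concrete witness of that failure. Your closing remark about freeness on an arbitrary generating set goes beyond what the paper's proof addresses, but it does not affect the correctness of the main argument.
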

\begin{proof} Given distinct generators $x \neq y$, while $x\cap y = 0$ in a free SBA algebra, this is not the case when $\cap$ is brought into the signature, since $x\cap y = 0$ is not an identity for skew Boolean algebras with intersections.
\end{proof}

We apply Theorem \ref{th:5.5} to give an alternative construction, suggested by~\cite{Kud},  of a free left-handed
algebra on any non-empty set $X$.   We begin with the set $\{0, 1\}^X$ of all maps $X\to \{0, 1\}$.
Effectively we just work with ${\mathcal X} = \{0, 1\}^X \setminus \{f_0\}$ where $f_0$ is the zero map.  Next, we set
$$
\Omega = \{(f, x)\colon f \in {\mathcal X} \text{ and } x \in X \text{ is such that } f(x) = 1\}.
$$
and define $p\colon \Omega\to {\mathcal X}$ by $p(f,x)=f$. If ${\bf S}_{\Omega}$ is the class of all subsets of $\Omega$, then on ${\bf S}_{\Omega}$ we define the binary operations  $\vee$, $\wedge$ and $\setminus$ by:
{\renewcommand{\arraystretch}{1.4}
$$
\begin{array}{lcl}
A \wedge B & = & \{(f, x) \in A\colon f \in  p(A) \cap p(B)\},\\
A\vee B & = & (A\setminus B)\cup B = \{(f,x)\in A\cup B\colon f\in p(A)\setminus p(B) \text{ or } f\in p(B)\},\\
A \setminus  B & = &\{(f, x) \in A\colon f \in p(A)\setminus  p(B)\}.
\end{array}
$$
}
The following statement is easily verified:

\begin{proposition}\label{prop:5.7}
$({\bf S}_{\Omega};  \wedge,\vee \setminus, \varnothing)$ is a left-handed skew Boolean algebra.
\end{proposition}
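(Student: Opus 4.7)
The plan is to exhibit $({\bf S}_{\Omega}; \wedge, \vee, \setminus, \varnothing)$ as isomorphic to a direct product of primitive left-handed skew Boolean algebras indexed by ${\mathcal X}$. Since the variety of left-handed skew Boolean algebras is closed under direct products, this will establish the proposition.

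For each $f \in {\mathcal X}$, put $T_f = f^{-1}(1) \subseteq X$, which is non-empty because $f \neq f_0$. The non-empty subsets of $T_f$ form a left-zero rectangular skew lattice under $S \wedge T = S$ and $S \vee T = T$. Let ${\bf P}_f$ denote the primitive left-handed skew Boolean algebra obtained from this band by adjoining a zero, as described at the start of Section~\ref{s2:primitive}. For any $A \subseteq \Omega$ and $f \in {\mathcal X}$, set $A_f = \{x \in X \colon (f,x) \in A\} \subseteq T_f$, so that $A_f$ is non-empty precisely when $f \in p(A)$. Then $\Phi \colon A \mapsto (A_f)_{f \in {\mathcal X}}$ is a bijection ${\bf S}_{\Omega} \to \prod_{f \in {\mathcal X}} {\bf P}_f$, with empty $A_f$ identified with the zero of ${\bf P}_f$, and $\Phi(\varnothing)$ is the all-zero tuple.

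The crux is to check that $\Phi$ intertwines the operations on ${\bf S}_{\Omega}$ with the coordinate-wise operations on the product. For $\wedge$: directly from the definition, $(A \wedge B)_f = A_f$ when $f \in p(A) \cap p(B)$ and is empty otherwise, which matches $A_f \wedge B_f$ in ${\bf P}_f$ (equal to $A_f$ when both factors are non-zero by the left-zero rule, and to $0$ as soon as one of them is zero). Analogous case analyses handle $\vee$ (splitting ${\mathcal X}$ into $p(B)$, $p(A) \setminus p(B)$, and the complement of $p(A) \cup p(B)$) and $\setminus$ (splitting according to whether $f$ lies in $p(A) \setminus p(B)$ or not).

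The only labour is the case analysis above, which is mechanical. The coordinate-wise behaviour is in fact forced by the way the operations on ${\bf S}_\Omega$ were defined: whether a pair $(f,x)$ survives in the output of any of $\wedge, \vee, \setminus$ depends only on where $f$ sits relative to $p(A)$ and $p(B)$, and the surviving pairs over a given $f$ are exactly the fibre of whichever input is ``dominant'' at $f$---precisely the left-zero pattern governing ${\bf P}_f$. Once $\Phi$ is confirmed to be an algebra isomorphism, the proposition is immediate.
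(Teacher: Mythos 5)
Your proof is correct, and it is worth noting that the paper itself offers no argument here: Proposition \ref{prop:5.7} is introduced with ``The following statement is easily verified'', the implicit intention being a direct check of the skew Boolean identities on ${\bf S}_{\Omega}$. Your route organizes that verification structurally: since each of $\wedge$, $\vee$, $\setminus$ acts fibrewise over $p$, you identify ${\bf S}_{\Omega}$ with the direct product $\prod_{f\in{\mathcal X}}{\bf P}_f$, where ${\bf P}_f$ is the primitive left-handed algebra obtained (as at the start of Section \ref{s2:primitive}) from the left-zero band of non-empty subsets of $T_f=f^{-1}(1)$ by adjoining a zero; left-handedness and all the axioms then follow from the primitive case together with closure of the variety under products. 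This is very much in the spirit of the paper's own product/orthosum viewpoint from Sections \ref{s2:primitive} and \ref{s3:finite}, and it anticipates the closing remark that $(\Omega,p,{\mathcal X})$ is the dual \'etale space of $_{\mathcal L}{\bf{SBA}}_X$, so nothing is lost and some insight is gained relative to a bare axiom check. One caution: your case analysis for $\vee$ tacitly uses the description $A\vee B=(A\setminus B)\cup B$, which is indeed the intended operation; the paper's alternative set-builder expression for $A\vee B$ contains a slip, since as literally written its defining condition holds for every element of $A\cup B$, so it would return $A\cup B$, and with that reading the absorption identity $(B\wedge A)\vee A=A$ (and hence the proposition) would fail. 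With the correct first description, your fibre computation ($B_f$ over $p(B)$, $A_f$ over $p(A)\setminus p(B)$, empty elsewhere) matches the coordinatewise join in $\prod_f{\bf P}_f$ exactly, so the isomorphism, and with it the proposition, stands.
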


Define $i\colon X \to {\bf S}_{\Omega}$ by $i(x) = \{(f, x)\colon f(x) = 1\}$. This map is clearly injective. We next let
 $\overline{X} = \{i(x) \colon x \in X\}$ and let ${\bf S}_X = \langle \overline{X}\rangle$ be the subalgebra of ${\bf S}_{\Omega}$ generated by $\overline{X}$.

\begin{theorem}\label{th:5.8}
${\bf S}_X$ is freely generated by $\overline{X}$.
\end{theorem}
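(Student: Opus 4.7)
The plan is to invoke the equivalence (i) $\Leftrightarrow$ (iii) of Theorem~\ref{th:5.5}. Since $\mathbf{S}_\Omega$ is left-handed by Proposition~\ref{prop:5.7} and $\mathbf{S}_X$ is generated by $\overline{X}$ by construction, it suffices to verify, for each finite subset $\{x_1,\dots,x_n\}\subseteq X$, that the $n2^{n-1}$ atomic terms of Theorem~\ref{th:3.2} evaluate on the generators $i(x_1),\dots,i(x_n)$ to $n2^{n-1}$ pairwise distinct, nonzero elements of $\mathbf{S}_\Omega$. These atomic terms are parametrised by pairs $(\{L|M\},y_1)$, where $\{L|M\}$ is a partition of $\{x_1,\dots,x_n\}$ with $L\neq\varnothing$ and $y_1\in L$ is a chosen leading element.

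The first step is a direct computation of the value of each atomic term in $\mathbf{S}_\Omega$. Using the definitions, one checks by induction that
\[
i(y_1)\wedge i(y_2)\wedge\cdots\wedge i(y_k)=\{(f,y_1)\in\Omega : f(y_1)=\cdots=f(y_k)=1\},
\]
so the leftmost variable $y_1$ governs the second coordinate. For the join part, the key observation is that $p(A\vee B)=p(A)\cup p(B)$ (which follows at once from the definitions), so $p\bigl(i(y_{k+1})\vee\cdots\vee i(y_n)\bigr)=\{f:f(y_j)=1\text{ for some }k<j\le n\}$. Combining these with the formula for $\setminus$, one obtains
\[
a_{\{L|M\},y_1} \,:=\, (y_1\wedge\cdots\wedge y_k)\setminus(y_{k+1}\vee\cdots\vee y_n) \,=\, \{(f,y_1):f|_L\equiv 1,\ f|_M\equiv 0\}.
\]
Note that this expression is invariant under permutations of $y_2,\dots,y_k$ and of $y_{k+1},\dots,y_n$, in agreement with Theorem~\ref{th:3.2}.

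The second step is to check that each $a_{\{L|M\},y_1}$ is nonempty and that distinct parameters give distinct sets. For nonemptiness, let $\chi_L\in\{0,1\}^X$ be the characteristic function of $L$ (with value $0$ outside $L$); since $L\neq\varnothing$ we have $\chi_L\in\mathcal{X}$, and $(\chi_L,y_1)\in a_{\{L|M\},y_1}$. For distinctness, if the leading elements differ then the sets have disjoint second coordinates; if the leading elements agree but $L\neq L'$, pick $x\in L\setminus L'$ (say), so that $x\in M'$, and observe that the witness $(\chi_L,y_1)$ belongs to $a_{\{L|M\},y_1}$ but not to $a_{\{L'|M'\},y_1}$, because $\chi_L(x)=1$ while membership in the latter forces the value $0$ at $x$.

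The only delicate point I anticipate is getting the join and difference formulas right under left-handedness, since $\vee$ is not commutative and the second-coordinate bookkeeping in $\mathbf{S}_\Omega$ must track the leftmost surviving term; this is exactly what the reduction $A\vee B=(A\setminus B)\cup B$ together with the identity $p(A\vee B)=p(A)\cup p(B)$ is designed to handle, so once these are laid out the remainder of the proof is purely combinatorial bookkeeping with characteristic functions.
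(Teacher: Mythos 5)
Your proposal is correct and follows essentially the same route as the paper: compute the evaluation of each atomic term in ${\bf S}_\Omega$, obtaining $\{(f,y_1)\colon f|_L\equiv 1,\ f|_M\equiv 0\}$, check these are nonempty and pairwise distinct, and conclude via Theorem~\ref{th:5.5}(iii). Your version merely makes the bookkeeping more explicit (the identity $p(A\vee B)=p(A)\cup p(B)$ and the characteristic-function witnesses), which the paper leaves as a brief verification.
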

\begin{proof} Given a finite subset $\{x_1, x_2, \dots, x_n\}$ of $X$, we show that each atomic term when evaluated on $\{i(x_1), i(x_2), \dots, i(x_n)\}$ is non-empty with distinct atomic terms having different evaluations. Observe that:
{\renewcommand{\arraystretch}{1.1}
$$
\begin{array}{lcl}
i(x)\wedge i(y) & = & \{(f, x) \in \Omega\colon f(x) = f(y) = 1\} = \{(f, x) \in \Omega\colon f(y) = 1\},\\
i(x) \setminus i(y)& = & \{(f, x) \in \Omega\colon f(x) = 1 \text{ and } f(y) = 0\} = \{(f, x) \in \Omega\colon f(y) = 0\}.
\end{array}
$$
}
As a consequence we obtain:
\begin{multline*}
(i(x)\wedge i(y) \wedge \dots \wedge i(z)) \setminus (i(u)\vee  \dots \vee i(w))\\
= \{(f, x) \in \Omega \colon f(x) = f(y) =\dots  = f(z) = 1 \text{ and } f(u) =\dots = f(w) = 0\}.
\end{multline*}
As long as the elements $x$ chosen from $X$ are distinct, none of the evaluated terms will be empty; and distinct ${\mathcal L}$-related atomic terms must yield distinct outcomes. Hence the conditions of  Theorem \ref{th:5.5}(iii) are satisfied.
\end{proof}

We finally provide an explicit connection of the construction of ${\bf S}_X$ with the duality theory of \cite{Kud}. The triple $(\Omega,p,{\mathcal X})$ can be shown to be (homeomorphic to) the dual \'etale space of $_{\mathcal L}{\bf{SBA}}_X$. Thus ${\bf S}_X$ is an isomorphic copy of $_{\mathcal L}{\bf{SBA}}_X$, which arises as the left-handed skew Boolean algebra of all compact-open sections, with respect to the operations of left restriction, right override and difference, of the dual \'etale space of $_{\mathcal L}{\bf{SBA}}_X$.


\section*{Acknowledgement} The referee's comments have led to improvements in the paper's presentation.


\end{document}